\documentclass[10pt]{amsart}
\textheight=21cm
\textwidth=13cm

\usepackage[utf8]{inputenc}
\usepackage{amsfonts,amssymb}
\usepackage{url}
\numberwithin{equation}{section}
\usepackage{amsthm}
\theoremstyle{plain}
\newtheorem{theorem}{Theorem}[section]
\newtheorem{lemma}[theorem]{Lemma}
\newtheorem{question}[theorem]{Question}
\newtheorem{corollary}[theorem]{Corollary}
\newtheorem*{corollary*}{Corollary}

\newtheorem{proposition}[theorem]{Proposition}

\theoremstyle{definition}
\newtheorem{definition}[theorem]{Definition}
\newtheorem*{definition*}{Definition}
\newtheorem{example}[theorem]{Example}

\theoremstyle{remark}
\newtheorem{remark}[theorem]{Remark}

\newcommand{\C}{\mathbf{C}}
\newcommand{\Z}{\mathbf{Z}}
\newcommand{\cX}{\mathcal{X}}
\newcommand{\cM}{\mathcal{M}}
\newcommand{\cN}{\mathcal{N}}
\newcommand{\E}{\mathbf{E}}
\newcommand{\vn}{\mathcal{C}}
\newcommand{\cH}{\mathcal{H}}
\newcommand{\cU}{\mathcal{U}}
\DeclareMathOperator{\tr}{tr}
\newcommand{\prob}{\mathbf{P}}
\newcommand{\F}{\mathbf{F}}
\DeclareMathOperator{\val}{val}
\DeclareMathOperator{\Tr}{Tr}
\newcommand{\anticom}{\mathrm{anticom}}
\begin{document}

\title{Spectral gap and stability for groups and non-local games}
\date{\today}

\author{Mikael De La Salle}
\address{Université de Lyon, CNRS, France}
\thanks{MdlS was funded by the ANR grants AGIRA ANR-16-CE40-0022 and Noncommutative analysis on groups and quantum groups ANR-19-CE40-0002-01}

\email{delasalle@math.univ-lyon1.fr}
\begin{abstract}
The word stable is used to describe a situation when mathematical objects that almost satisfy an equation are close to objects satisfying it exactly. We study operator-algebraic forms of stability for unitary representations of groups and quantum synchronous strategies for non-local games. We observe in particular that simple spectral gap estimates can lead to strong quantitative forms of stability. For example, we prove that the direct product of two (flexibly) Hilbert-Schmidt stable groups is again (flexibly) Hilbert-Schmidt stable, provided that one of them has Kazhdan's property (T). We also provide a simple form and simple analysis of a non-local game with few questions, with the property that synchronous strategies with large value are close to perfect strategies involving large Pauli matrices. This simplifies one of the steps (the question reduction) in the recent announced resolution of Connes' embedding problem by Ji, Natarajan, Vidick, Wright and Yuen.
\end{abstract}
\maketitle

The aim of this note is to present, in Theorem~\ref{thm:mainTheorem}, the construction of a $2$-player non-local game with $O(N^2)$ questions and $2^N$ answers where any quantum synchronous strategy with value $1-\varepsilon$ is $O(\varepsilon)$-close to a synchronous startegy involving Pauli matrices of size $2^N$ (see Section~\ref{section:games} for the precise definitions). This can be used to simplify the analysis of the Pauli basis test from \cite{MIPRE}, generalize it and provide better quantitative bounds. The proof does not use a specific quantum soundness property of any specific code, and the result is the particular case (applied to asymptotically good codes) of a general construction that takes as input any linear error-correcting code, see Example~\ref{ex:game_from_code}. The main originality in this work is Theorem~\ref{thm:almost_commutation}, which is a very easy consequence of a simple spectral gap argument (Lemma~\ref{lemma:spectral_gap_commutator}). As in previous work of Vidick \cite{vidickPauliBraiding}, the proof also relies on a general form of stability in average for representations of finite groups, but there is a new input in Lemma~\ref{lem:GowersHatami_subgroup}.

It turns out that the same idea has a consequence about Hilbert-Schmidt stability that was apparently not known, although similar ideas are also present in the work of Ioana \cite{MR4134896}. See \S~\ref{sec:stability} for the terminology and a more precise and quantitative statement, that is also relevant for finite groups.
\begin{theorem}\label{thm:stability_direct_product_simple} Let $G$ and $H$ be two countable groups, one of which has property (T).

  If $G$ and $H$ are both Hilbert-Schmidt stable, then so is $G\times H$.

  If $G$ and $H$ are both Hilbert-Schmidt flexibly stable, then so is $G\times H$.
\end{theorem}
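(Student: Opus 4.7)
The strategy is to pass to the tracial ultraproduct. An asymptotic representation $\sigma_n$ of $G\times H$ in $M_{d_n}(\C)$ yields a unitary representation $\sigma\colon G\times H\to U(\cM)$ in $\cM:=\prod^\omega M_{d_n}(\C)$, and Hilbert--Schmidt (flexible) stability of a group $K$ is equivalent to saying that every representation $K\to U(\cM)$ is the ultraproduct limit of honest representations $K\to U(d_n')$ with $d_n'=d_n$ (respectively, $d_n'\sim d_n$ in the flexible case). Write $\pi=\sigma|_G$ and $\rho=\sigma|_H$, which have commuting images in $\cM$, and assume without loss of generality that $G$ has property (T). First, lift $\pi$ using Hilbert--Schmidt stability of $G$ to obtain honest representations $\tilde\pi_n\colon G\to U(d_n')$ whose ultraproduct limit is $\pi$.

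The heart of the argument is then to perturb $\rho$ so that it lies in the commutant of $\tilde\pi_n$, and this is where property (T) enters. Since $\pi$ and $\rho$ commute in $\cM$, for each $h\in H$ and each $g$ in a fixed Kazhdan set $S\subset G$, the commutators $[\tilde\pi_n(g),\rho(h)_n]$ tend to zero in Hilbert--Schmidt norm along $\omega$, where $\rho(h)_n$ is any representative of $\rho(h)$ in $M_{d_n'}(\C)$. The spectral gap consequence of property (T), in the spirit of Lemma~\ref{lemma:spectral_gap_commutator}, supplies a constant $\kappa>0$ such that for every $n$ and every $T\in M_{d_n'}(\C)$,
\[
\|T-E_{A_n}T\|_{HS}^2 \le \kappa\sum_{g\in S}\|[\tilde\pi_n(g),T]\|_{HS}^2,
\]
where $A_n:=\tilde\pi_n(G)'$ and $E_{A_n}$ is the trace-preserving conditional expectation onto $A_n$. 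Applied to $T=\rho(h)_n$ and followed by a polar decomposition to extract a unitary part, this produces unitaries $\rho_n''(h)\in A_n$ whose class in $\cM$ is still $\rho(h)$. Thus $\rho$ is realized as a unitary representation of $H$ into the ultraproduct $\prod^\omega A_n$ of the commutants.

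Finally, apply Hilbert--Schmidt (flexible) stability of $H$ to this representation to obtain honest representations $\tilde\rho_n\colon H\to U(A_n)$ whose ultraproduct limit is $\rho$. By construction $\tilde\rho_n$ commutes with $\tilde\pi_n$, so $(\tilde\pi_n,\tilde\rho_n)$ assembles into an honest representation of $G\times H$ on $\C^{d_n'}$, close to $\sigma_n$ in Hilbert--Schmidt norm. The step I expect to be most delicate is this last one: stability is usually stated for asymptotic representations into full matrix algebras, whereas here the target is a sequence of direct sums $A_n\cong\bigoplus_i M_{m_{n,i}}(\C)$ equipped with the trace inherited from $M_{d_n'}(\C)$, which is a convex combination of the normalized block traces. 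One must either verify that the standard definition passes through this block structure (by rescaling and padding the blocks so as to reduce to the single-matrix case) or reformulate stability intrinsically for tracial ultraproducts; the flexible case additionally requires minor bookkeeping to absorb the discrepancy between $d_n$ and $d_n'$ when embedding $M_{d_n}(\C)$ into $M_{d_n'}(\C)$.
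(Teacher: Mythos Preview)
Your proposal is correct and follows essentially the same strategy as the paper: apply stability of the property~(T) factor first, use the spectral gap estimate (Lemma~\ref{lemma:spectral_gap_commutator}) to push the second factor into the commutant via conditional expectation and polar decomposition, then apply stability of the second factor there. The paper carries this out as a direct quantitative argument with explicit moduli of stability $\delta_1,\delta_2$ (Theorems~\ref{thm:stability_direct_product} and~\ref{thm:flexible_stability_direct_product}), whereas you phrase it in ultraproduct language; the underlying mechanism is identical. The subtlety you flag at the end---that stability must be applied not in full matrix algebras but in the commutants $A_n\cong\bigoplus_i M_{m_{n,i}}(\C)$---is exactly the point the paper isolates and resolves separately: it introduces a class $\vn$ of tracial von Neumann algebras closed under subalgebras, defines $\vn$-stability, and proves a short lemma that Hilbert--Schmidt stability (i.e.\ $\{M_n\}$-stability) is equivalent to $\vn_{\mathrm{fin}}$-stability, using concavity of the modulus to average over the blocks. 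Your suggested fix (rescaling and padding blocks) amounts to the same observation. The paper's framing has the advantage of yielding an explicit modulus $\delta(\varepsilon)\lesssim\delta_2(\kappa(\mu_1)\delta_1(\varepsilon))$ for the product; your ultraproduct route is cleaner to state but does not directly produce such a bound.
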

The property (T) assumption is crucial, as Ioana recently proved \cite{Ioana2} that the direct product of two finitely generated non-abelian free groups is not flexibly Hilbert-Schmidt stable, whereas free groups are obviously Hilbert-Schmidt stable.
  
In this note, we start from scratch and prove every statement that we need (we sometimes refer to \cite{orthonormalisation}, but the results that rely on it are not used in the proof of the main Theorem). So a significant part of the note consists of known facts, either classical or borrowed from \cite{MIPRE}.

The paper is organized as follows. In Section~\ref{sec:preliminaries}, we present some preliminary facts on Fourier transform for abelian groups, spectral gaps and flexible stability for finite groups. In particular, we prove some variants of results of Gowers and Hatami \cite{MR3733361}. In Section~\ref{sec:main} we prove the main result in terms of stability for direct products of finite groups. In Section~\ref{section:games}, we translate this result in terms of non-local games. Section~\ref{sec:stability}, completely independent from the rest of the paper (except for the use of Lemma~\ref{lemma:spectral_gap_commutator}), is devoted to the proof of Theorem~\ref{thm:stability_direct_product_simple}. 

\subsection*{Acknowledgements} I thank Adrian Ioana, Thomas Vidick and Henry Yuen for useful discussions and comments on a preliminary form of this note. I also thank the anonymous referee for many constructive comment, and for suggesting Remark~\ref{rem:added_by_referee}.
\section{Preliminaries}\label{sec:preliminaries}
\subsection{Matrix and von Neumann algebras notation}
We recall a few basic facts about von Neumann algebras. For proofs and much more, we refer the reader to standard texts such as \cite{MR1873025}, or the modern book project \cite{AnantharamanPopa}.

A von Neumann algebra is a self-adjoint subalgebra of the algebra $B(\cH)$ of bounded operators on a complex Hilbert space $\cH$ that is equal to its bicommutant. Here the commutant of a subset $A\subset B(\cH)$ is the algebra
\[A':=\{X\in B(\cH)\mid  \forall Y \in A,\  XY=YX\},\] and its bicommutant is the commutant of its commutant. Every von Neumann algebra is uniquely a dual space \cite[Corollary III.3.9]{MR1873025}, which allows us to talk about its weak-* topology.

A state on a von Neumann algebra $\cM$ is a linear form $\cM \to \C$ that is positive, meaning that $\tau(x^*x)\geq 0$ for every $x\in \cM$, and normalized by $\tau(1)=1$. It is called faithful if $\tau(x^*x)=0$ holds only if $x=0$. It is called normal if it is continuous for the weak-* topology, and it is called a trace if $\tau(ab)=\tau(ba)$.

In the whole note, we denote by $(\cM,\tau)$ a von Neumann algebra with a normal faithful tracial state. The main case of interest in $\cM = M_n(\C)$ for large $n$ and $\tau$ is the normalized trace $\tr = \frac{1}{n}\Tr$. The reader can safely assume throughout the paper that we are in this situation.

We will denote $\|x\|_2 = \big(\tau(x^*x)\big)^{\frac 1 2}$ for the $L_2$ norm on $\cM$. The completion of $\cM$ for this norm is denoted $L_2(\cM,\tau)$. We also use the notation $\|x\|_q = \big(\tau( (x^*x)^{q/2})\big)^{\frac 1 q}$ for the $L_q$ norm and $1\leq q<\infty$, and $\|x\|_\infty$ for the operator norm. 

If $(\cM,\tau)$ is a von Neumann algebra with a faithful normal tracial state and if $\cN \subset\cM$ is an inclusion of von Neumann algebras, we obtain that $L_2(\cN,\tau)\subset L_2(\cM,\tau)$. The orthogonal projection $L_2(\cM,\tau) \to L_2(\cN,\tau)$ turns out to map $\cM$ into $\cN$, and the corresponding map is called the conditional expectation from $\cM$ to $\cN$ \cite[\S 9]{AnantharamanPopa} or \cite[Proposition V.2.36]{MR1873025}.

We also denote $\cM_\infty = \cM \overline{\otimes} B(\ell_2)$ with (infinite) trace $\tau_\infty:=\tau \otimes \Tr$. We often identify $\cM$ with $\cM \otimes e_{1,1} \subset \cM_\infty$ (with a different unit $1_\cM = 1 \otimes e_{1,1}$ than $1_{\cM_\infty}$), and often use the same letter $\tau$ to denote the amplified trace $\tau_\infty$. We also use the notation $\|x\|_2 = \big(\tau_\infty(x^*x)\big)^{\frac 1 2}$ for $x \in \cM_\infty$, but in this situation the norm can take the value $\infty$.

A PVM (positive valued measure), on a Hilbert space $\cH$ and indexed by a finite set $I$ is a family $(P_i)_{i\in I}$ of self-adjoint projections such that $\sum_i P_i=1$. We talk about PVMs in $\cM$ if $\cM\subset B(\cH)$ and $(P_i)_{i \in I}$ is a PVM on $\cH$ with $P_i \in \cM$ for every $i$.

\subsection{Reminders on Fourier transform}
Groups appearing in this paper will be denoted multiplicatively. We will often denote by $G$ arbitrary groups and by $A$ abelian groups, except in the last section where the letter $A$ will be reserved to answers. If a group $G$ is finite, we will denote $\prob_G$ the uniform probability measure on $G$ and $\E_g f(g)$ the integration with respect to it. When several groups enter the picture and precisions are needed, we sometimes write $\E_{g \in G} f(g)$. We will write $L_2(G)$ and $\ell_2(G)$ for the functions $G \to \C$ with norms
\[\|f\|_{L_2} = \left(\E_g |f(g)|^2\right)^{\frac 1 2}\textrm{ and }\|f\|_{\ell_2} = \left(\sum_{g \in G} |f(g)|^2\right)^{\frac 1 2}\]
respectively. Of course, $L_2(G)$ will only make sense for finite groups.

Let $A$ be a finite abelian group. Recall that a character on $A$ is a group homomorphism $A \to \{z \in \C \mid |z|=1\}$. The set of all characters of $A$, denoted $\hat A$, is a finite group for the operation of pointwise multiplication called the (Pontryagin) dual of $A$. Moreover, the dual of the dual of $A$ identifies naturally with $A$.

The Fourier transform, which implements an isometry between $L_2(A)$ and $\ell_2(\hat A)$, also implements a correspondence between unitary representations of $A$ and PVMs on $\hat A$.
\begin{lemma}\label{lem:Fourier_abelian_group} For every Hilbert space $\cH$, the following maps, inverse of each other, are bijections between unitary representations of $A$ on $\cH$ and PVMs on $\cH$ indexed by $\hat A$~:
  \[ U \mapsto \left(P_\chi = \E_a \overline{\chi}(a) U(a)\right)_{\chi \in \hat A},\]
  \[ (P_\chi)_{\chi \in \hat A} \mapsto \left( U: a \mapsto \sum_\chi \chi(a) P_\chi\right).\]
  \end{lemma}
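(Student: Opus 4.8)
The plan is a direct verification; the only inputs are the two orthogonality relations for characters of the finite abelian group $A$. Writing $\overline\chi$ for the character $a\mapsto \overline{\chi(a)}=\chi(a^{-1})=\chi(a)^{-1}$, these are: for $\chi,\chi'\in\hat A$,
\[\E_a (\chi\overline{\chi'})(a) = \delta_{\chi,\chi'},\]
and, dually (i.e.\ the same relation applied to $\hat{\hat A}=A$), for $a\in A$,
\[\sum_{\chi\in\hat A}\chi(a) = |A|\,\delta_{a,e},\]
where $e$ denotes the identity of $A$ and we used $|\hat A|=|A|$. We also use $|\chi(a)|=1$ throughout. (One could equivalently phrase everything by saying that the Fourier transform is a $*$-isomorphism of the group algebra $\C[A]$ onto the algebra of functions on $\hat A$ carrying the minimal idempotents to the indicator functions, but the computation below is short enough to do by hand.)

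First I would check that $U\mapsto(P_\chi)_{\chi}$ takes values in PVMs on $\cH$. Self-adjointness of $P_\chi$ follows from $U(a)^*=U(a^{-1})$ and the change of variable $a\mapsto a^{-1}$, which leaves $\E_a$ invariant. For idempotency and orthogonality, expand
\[P_\chi P_{\chi'} = \E_a\E_b \overline\chi(a)\,\overline{\chi'}(b)\,U(ab),\]
substitute $c=ab$ (a bijection of $A\times A$ in the variables $(a,c)$), use $\overline{\chi'}(a^{-1}c)=\chi'(a)\overline{\chi'}(c)$, and perform the average over $a$ first: the inner factor is $\E_a(\chi'\overline\chi)(a)=\delta_{\chi,\chi'}$, so $P_\chi P_{\chi'}=\delta_{\chi,\chi'}P_\chi$. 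Finally $\sum_\chi P_\chi=\E_a\big(\sum_\chi\overline\chi(a)\big)U(a)=U(e)=1$ by the dual orthogonality relation.

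Next I would check that $(P_\chi)_\chi\mapsto U$ takes values in unitary representations. From $P_\chi P_{\chi'}=\delta_{\chi,\chi'}P_\chi$, $P_\chi^*=P_\chi$ and $|\chi(a)|=1$ one gets $U(a)^*U(a)=\sum_\chi|\chi(a)|^2 P_\chi=\sum_\chi P_\chi=1$, and symmetrically $U(a)U(a)^*=1$; the multiplicativity $\chi(a)\chi(b)=\chi(ab)$ gives $U(a)U(b)=\sum_\chi\chi(ab)P_\chi=U(ab)$, and $U(e)=\sum_\chi P_\chi=1$. It then remains to see the two maps are mutually inverse. Starting from $U$, forming $(P_\chi)$ and then $U'(a)=\sum_\chi\chi(a)P_\chi=\E_b\big(\sum_\chi\chi(ab^{-1})\big)U(b)$, the dual orthogonality relation collapses the sum to $b=a$, so $U'=U$. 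Starting from $(P_\chi)$, forming $U$ and then $P'_{\chi'}=\E_a\overline{\chi'}(a)\sum_\chi\chi(a)P_\chi=\sum_\chi\big(\E_a(\chi\overline{\chi'})(a)\big)P_\chi=P_{\chi'}$ by the first orthogonality relation.

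There is no genuine obstacle here; the only points requiring care are not to confuse the two orthogonality relations (averaging one character over the group elements versus summing all characters at a fixed group element) and to track the normalisations hidden in $\E_a$ versus $\sum_\chi$, which is exactly where the self-duality $\hat{\hat A}\cong A$ and $|\hat A|=|A|$ are used.
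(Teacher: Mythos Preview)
Your proof is correct and takes exactly the approach the paper indicates: the paper's proof is just the one line ``This is well-known and follows from the orthogonality of characters,'' and you have faithfully spelled out that computation.
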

\begin{proof} This is well-known and follows from the orthogonality of characters. 

\end{proof}

\subsection{Spectral gap preliminaries}\label{sec:spectral_gap}
Let $G$ be a countable group (for example a finite group). If $\mu$ is a symmetric probability measure on $G$ with generating support, then for every unitary representation $(\pi,\cH)$, $\pi(\mu):=\sum_g \mu(g)\pi(g)$ is a self-adjoint operator of norm $\leq 1$, whose eigenspace for the eigenvalue $1$ is the space of invariant vectors $\cH^\pi=\{\xi\in\cH \mid \forall g\in G, \pi(g)\xi=\xi\}$ (this uses that the support of $\mu$ generates the whole group $G$). We say that $\mu$ has spectral gap is there is a positive real number $\kappa$ such that for every unitary representation $(\pi,\cH)$ of $G$, the spectrum of $\pi(\mu)$ in contained in $[-1,1-\frac{1}{\kappa}]\cup\{1\}$, or equivalently is the restriction of $\pi(\mu)$ to the orthogonal of $\cH^\pi$ has spectrum contained in $[-1,1-\frac{1}{\kappa}]$. The smallest such $\kappa$ is denoted by $\kappa(\mu)$, it is the inverse of the spectral gap. If $\mu$ does not have spectral gap, we set $\kappa(\mu)=\infty$. The reason for this parametrization of the spectral gap is because it is proportional to the constant appearing in the Poincar\'e inequalities, and is related to the Kazhdan constants. Specifically, $\kappa(\mu)$ is the smallest real number such that, for every unitary representation $(\pi,\cH)$ of $G$, and every vector $\xi \in \cH$,
\begin{equation}\label{eq:Poincare_constant} \|\xi - P_{\cH^\pi} \xi\|^2 \leq \frac\kappa 2 \int_G \|\pi(g) \xi-\xi\|_2^2d\mu(g).
\end{equation}
Here $P_{\cH^\pi}$ is the orthogonal projection on $\cH^\pi$. Let us justify this characterization of $\kappa$. Indeed, by expanding the square norms, \eqref{eq:Poincare_constant} is equivalent to the inequality
\[ \|\xi\|^2 \leq \kappa (\|\xi\|^2 - \langle\pi(\mu)\xi,\xi\rangle) \forall \xi \in (\cH^\pi)^\perp,\]
or in other words (Rayleigh quotients) the spectrum of $\pi(\mu)$ restricted to $(\cH^\pi)^\perp = \ker(\pi(\mu)-1)^\perp$ consists of real numbers $\lambda$ satisfying $1\leq \kappa(1-\lambda)$, that is $\lambda \leq 1-\frac{1}{\kappa}$.

For a given group $G$, $\mu$ has spectral gap if and only if $G$ has Kazhdan's property (T). In particular, the property that $\kappa(\mu)$ is finite does not depend on $\mu$. But it will be useful, in particular for finite groups, to study measures with good spectral gap. The extreme case is when $\mu =\prob_G$ is the uniform probability on $G$. In that case, $\pi(\prob_G)$ is the orthogonal projection on $\cH^\pi$ and therefore $\kappa(\mu) =1$ (if $G$ is not the group $\{0\}$).

If $\mu$ is a not-necessarily symmetric probability measure on $G$ with support still generating, we define $\kappa(\mu)$ as $\kappa(\nu)$ where $\nu$ is the symmetric measure $\nu(g) = \frac{1}{2}(\mu(g)+\mu(g^{-1})$. It is also the smallest constant such that \eqref{eq:Poincare_constant} holds.

\subsection{Probability measures with spectral gap and small support}
If $A$ is a finite abelian group and $\mu$ is a probability measure on $A$, denote by $\hat \mu:\hat A \to \C$ its Fourier transform $\hat \mu(\chi) = \int \chi(a) d\mu(a)$. By Fourier transform, the spectral gap constant $\kappa(\mu)$ is simply expressed in terms of $\hat \mu$ by
\begin{equation}\label{eq:spectral_gap_abelian}\kappa(\mu) = \max_{\chi \in \hat A \setminus \{1\}} \frac{1}{1-\Re\hat \mu(\chi)}.
\end{equation}
In the particular case of $A=(\Z/2\Z)^N$ and of uniform measures on finite sets, the spectral gap can be expressed in the language of error-correcting codes, and measures with small spectral gap constant (respectively small support) are the same as linear codes with large distance (respectively large dimension). We recall that in the next example, and refer for example to \cite{MR0465509} or \cite{MR1996953} for the vocabulary. I thank Jason Gaitonde for pointing out to me on Mathoverflow this connection with error-correcting codes \cite{414657}.
\begin{example}\label{ex:code_spectral_gap} Let $A=(\Z/2\Z)^N$ and let $\mu$ be the uniform probability measure on a generating family $a_1,\dots,a_K$. Define $C\subset (\Z/2\Z)^K$ the subgroup generated by $b_1,\dots,b_N$ where $b_j(i) = a_i(j)$. Then
  \begin{equation}\label{eq:relation_kappa_distance_binary} \kappa(\mu) = \frac{1}{2 \min\{ d_H(x,0) \mid x  \in C\setminus\{0\}\}}.
  \end{equation}
  Here $d_H$ is the normalized Hamming distance on $(\Z/2\Z)^K$
  \[ d_H(x,y) = \frac{1}{K}\sum_{i=1}^K 1_{x(i) \neq y(i)}.\]
  In the vocabulary of error correcting codes, $C$ is a $[K,N,\frac{K}{2\kappa(\mu)}]$-linear binary code. Here $K$ is the \emph{length}, $N$ is the \emph{dimension} and $\frac{K}{2\kappa(\mu)}$ is the \emph{distance} of the code (and $\frac 1{2\kappa(\mu)}$ is the \emph{relative distance}).

Conversely, if $C$ is a binary linear code of length $K$, dimension $N$ and distance $d$, any choice of a basis $b_1,\dots,b_N$ of $C$ gives rise to a subset of cardinality $K$ of $(\Z/2\Z)^N$ with spectral gap constant $2K/d$. More generally, let $q$ be a prime power and $C$ be a $[K,N,d]_q$-code, that is $C\subset \F_q^K$ is a linear subspace of dimension $N$ and distance $d = \min_{x \in C\setminus \{0\}} \#\{i\leq K\mid x_i \neq 0\}$. Any choice of a basis $b_1,\dots,b_N$ of $C$ allows us to define a subset of cardinality $(q-1)K$ of $\widehat{\F_q^N}$
  \[ \left\{ y \in \F_q^N \mapsto \chi(\sum_j y_j b_j(i) ) \mid \chi \in \widehat{\F_q}\setminus\{1\}, 1\leq i\leq K\right\}.\]
  The uniform  probability measure $\mu$ on this finite set has spectral gap constant
  \begin{equation}\label{eq:relation_kappa_distance_code}\kappa(\mu) = \frac{q-1}{q} \frac{K}{d}.
  \end{equation}
\end{example}
\begin{proof} In the construction of the binary code from the family $a_1,\dots,a_K$, the assumption that $a_1,\dots,a_K$ generates $(\Z/2\Z)^N$ is equivalent to $b_1,\dots,b_N$ being linearly free. So \eqref{eq:relation_kappa_distance_binary} is a particular case of \eqref{eq:relation_kappa_distance_code} for $q=2$, and all we have to do is justify \eqref{eq:relation_kappa_distance_code}. By \eqref{eq:spectral_gap_abelian}, we have
  \begin{align*} \frac{1}{\kappa(\mu)}& =\min_{y \in \F_q^N\setminus\{0\}}1-\frac{1}{(q-1)K} \sum_{\chi \in\hat{\F_q}\setminus\{0\}, i\leq K} \chi(\sum_j y_j b_j(i))\\
    & =\min_{y \in \F_q^N\setminus\{0\}}\frac{q}{q-1}(1-\frac{1}{K} \sum_{i\leq K} 1_{\sum_j y_j b_j(i)=0})\\
    & =\frac{q}{q-1}\frac{d}{K}.
  \end{align*}
  The second inequality is because, for every $z \in\F_q$,
  \[\frac{1}{q-1} \sum_{\chi \in\hat{\F_q}\setminus\{0\}} \chi(z) = \frac{1}{q-1}(-1+\sum_{\chi \in\hat{\F_q}}\chi(z)) = -\frac{1}{q-1} +\frac{q}{q-1} 1_{z=0}.\qedhere \]
\end{proof}

The following is a a result by Alon and Roichman \cite{MR1262979}, see also \cite{MR2097328} for a simple proof. By the previous example, it generalizes to arbitrary finite groups the classical fact that there exist \emph{asymptotically good binary linear codes}, that is linear codes with both dimension and distance proportional to the length. 



\begin{proposition}\cite{MR1262979}\label{prop:measure_with_small_support_and_small_Fourier_transform_nonabelian} There is a constant $C$ such that, for every finite group $G$, there is a subset $F \subset G$ of size $K\leq C  \log |G|$ such that $\mu$, the uniform probability measure on $F$, has spectral gap constant $\kappa(\mu)\leq 2$.
\end{proposition}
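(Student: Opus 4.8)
The plan is to run the classical probabilistic argument of Alon--Roichman \cite{MR1262979}, in the streamlined form based on a matrix concentration inequality \cite{MR2097328}. Fix $K = \lceil c_0\log|G|\rceil$, with $c_0$ an absolute constant to be chosen later, draw $g_1,\dots,g_K\in G$ independently and uniformly, put $F = \{g_1,\dots,g_K\}$, and let $\mu$ be the uniform probability measure on $F$. For $|G|$ large the elements $g_i$ are pairwise distinct outside an event of probability $O\!\big((\log|G|)^2/|G|\big)$, so that $|F| = K \le c_0\log|G|$; and by definition $\kappa(\mu) = \kappa(\nu)$ for the symmetrization $\nu := \frac{1}{2K}\sum_{i=1}^K\big(\delta_{g_i}+\delta_{g_i^{-1}}\big)$. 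So I want to show that, with positive probability over the sample, the spectrum of $\pi(\nu)$ on $(\cH^\pi)^\perp$ is contained in $[-1,\tfrac12]$ for every unitary representation $(\pi,\cH)$ of $G$.

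The first step is to reduce to irreducible representations: every unitary representation of the finite group $G$ is a direct sum of irreducibles, and $\pi(\nu)$ is the identity on $\cH^\pi$ and preserves $(\cH^\pi)^\perp$, so it suffices to prove $\|\sigma(\nu)\|_\infty \le \tfrac12$ for every \emph{nontrivial} irreducible representation $\sigma$ (the trivial one needing no argument). The key observation is that $\E_{g\in G}\sigma(g) = 0$ for such $\sigma$: the operator $P := \E_g\sigma(g)$ satisfies $\sigma(h)P = P$ for all $h\in G$, so its range is contained in $\cH^\sigma = \{0\}$. Hence, writing $Y_i := \tfrac12\big(\sigma(g_i)+\sigma(g_i)^*\big)$, we obtain i.i.d.\ self-adjoint $d_\sigma\times d_\sigma$ random matrices with $\E Y_i = 0$, $\|Y_i\|_\infty \le 1$, and $\sigma(\nu) = \tfrac1K\sum_{i=1}^K Y_i$. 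The matrix Chernoff (Ahlswede--Winter) inequality then yields an absolute constant $a>0$ with
\[ \prob\Big(\|\sigma(\nu)\|_\infty \ge \tfrac12\Big) \;\le\; 2\,d_\sigma\, e^{-aK}. \]

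Next I would take a union bound over the nontrivial irreducible representations. Since $\sum_\sigma d_\sigma^2 = |G|$ and each $d_\sigma\ge 1$, we have $\sum_{\sigma}d_\sigma \le |G|$, so
\[ \prob\Big(\exists\ \sigma\ \text{nontrivial irreducible}:\ \|\sigma(\nu)\|_\infty \ge \tfrac12\Big) \;\le\; 2|G|\,e^{-aK}, \]
which is $<\tfrac12$ once $K > a^{-1}\log(4|G|)$; taking $c_0 = 2/a$ makes this hold for all sufficiently large $|G|$, while the finitely many remaining groups are handled directly by taking $F = G$ (so that $\mu = \prob_G$ and $\kappa(\mu) = 1$), after enlarging $c_0$ if necessary. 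For a sample with $\|\sigma(\nu)\|_\infty \le \tfrac12$ for all nontrivial irreducible $\sigma$, the support of $\mu$ automatically generates $G$: if not, with $H = \langle\mathrm{supp}\,\mu\rangle\subsetneq G$, the space of functions in $\ell_2(G)$ that are right-translation-invariant under $H$ has dimension $[G:H]\ge 2$ and lies in the $1$-eigenspace of the regular representation evaluated at $\nu$, which via the Peter--Weyl decomposition forces some nontrivial irreducible $\sigma$ to satisfy $\|\sigma(\nu)\|_\infty \ge 1$, a contradiction. Hence $\kappa(\mu) = \kappa(\nu) \le 2$, as required.

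The only non-elementary ingredient is the matrix concentration bound, which I would treat as a black box (this is exactly the content of \cite{MR2097328}); the rest is routine. The one point to watch is the dimension dependence in the tail bound: Ahlswede--Winter produces a factor that is only polynomial in $d_\sigma$ (even linear, as above), which is what lets the union bound survive against $\sum_\sigma d_\sigma \le |G|$ with $K$ of order merely $\log|G|$ --- precisely the assertion of the proposition.
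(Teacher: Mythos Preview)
The paper does not supply its own proof of this proposition: it merely cites Alon--Roichman \cite{MR1262979} and points to \cite{MR2097328} for a simple argument. Your proposal is precisely that simple argument --- the Landau--Russell proof via the Ahlswede--Winter matrix concentration inequality and a union bound over irreducibles --- so it is correct and is exactly the approach the paper refers the reader to.
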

In this proposition, the logarithmic dependance between $G$ and $K$ is clearly optimal (if $G=(\Z/2\Z)^N$, $F$ has to contain a basis). But for interesting groups, (for example finite simple groups \cite{MR2221038}), much stronger results are known, and there exist measures of bounded support with uniform spectral gap.

\subsection{Average Gowers-Hatami theorem}
Some of the results in this subsection are probably well-known (except for Lemma~\ref{lem:GowersHatami_subgroup} that seems to be new), but we do not know whether they appear explicitely in the litterature. A form of the following theorem appears for example in \cite{vidickPauliBraiding}. The case $p=\infty$ is contained in \cite{MR3867328}, which was itself a generalization of \cite{MR3733361}. For unitary groups replaced by permutation groups, similar results also appear in \cite{beckerChapman}. The results of \cite{MR3867328} and \cite{beckerChapman} are also valid for amenable discrete groups. 

Following the terminology in \cite{MR3867328}, we call a unitarily invariant semi-norm on $\cM_\infty$ any semi-norm that is defined on an ideal of $\cM_\infty$ and that is invariant by left and right multiplication by unitaries, and that is by convention put to be equal to $\infty$ outside of this ideal. Later, we will only use the $2$-norm. Note that we do not ask that the ideal contains $\cM$.
\begin{theorem}\label{thm:averageGowersHatami} Let $G$ be a finite group, $1 \leq p < \infty$ and $\varphi:G \to \cU(\cM)$ be a map. Let $\|\cdot\|$ be a unitarily invariant semi-norm on $\cM_\infty$. Assume that
  \[ \left(\E_{g,h \in G} \|\varphi(gh)-\varphi(g)\varphi(h)\|^p\right)^{\frac 1 p} \leq \varepsilon.\]
  Then there is a projection $P \in \cM_\infty$, a unitary representation $\pi: G \to \cU(P\cM_\infty P)$ and an isometry $w \in P \cM_\infty 1_\cM$ such that
  \begin{equation}\label{eq:varphi_close_to_rep}\left(\E_g \|\varphi(g)-w^* \pi(g) w\|^p\right)^{\frac 1 p}\leq 13\varepsilon,
  \end{equation}
  \begin{equation}\label{eq:P_of_trace_almost_1} \|P-ww^*\| \leq 4\varepsilon.
  \end{equation}
\end{theorem}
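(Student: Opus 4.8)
The plan is to follow the Gowers–Hatami strategy adapted to the averaged hypothesis. Let me work through how this goes.

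The plan is to run the Gowers--Hatami strategy in the operator-algebra setting: first \emph{dilate} $\varphi$ to a genuine representation of $G$ on a large space, then \emph{compress} it back onto a small $G$-invariant projection. Fix an isometric embedding $\ell_2(G)\hookrightarrow \ell_2$ sending $\delta_1$ to the first basis vector, so that $\cH\otimes\ell_2(G)$ becomes the corner $q\cM_\infty q$, where $q=1_\cM\otimes p$ and $p$ is the projection onto $\ell_2(G)$, of finite trace $|G|\,\tau(1_\cM)$. Put $\rho_0(g):=1_\cM\otimes\lambda(g)\in\cU(q\cM_\infty q)$ for the left regular representation $\lambda$ of $G$, and set
\[ V:=\frac{1}{\sqrt{|G|}}\sum_{g\in G}\varphi(g)^*\otimes|\delta_g\rangle\langle\delta_1|\ \in\ q\,\cM_\infty\,1_\cM .\]
Since each $\varphi(g)$ is unitary, $V^*V=1_\cM$, so $V$ is an isometry and $\rho_0$ is a genuine unitary representation.

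First I would carry out the dilation estimate. A direct computation gives $V^*\rho_0(g)V=\E_{a\in G}\varphi(a)\varphi(g^{-1}a)^*$, hence $\varphi(g)-V^*\rho_0(g)V=\E_{a\in G}\bigl(\varphi(g)-\varphi(a)\varphi(g^{-1}a)^*\bigr)$. Right-multiplying the $a$-th term by the unitary $\varphi(g^{-1}a)$ — harmless for a unitarily invariant seminorm — turns it into the multiplicativity defect $\varphi(g)\varphi(g^{-1}a)-\varphi\bigl(g\cdot(g^{-1}a)\bigr)$. Convexity of $\|\cdot\|$ and of $t\mapsto t^p$, together with the substitution $b=g^{-1}a$, then yield
\[ \Bigl(\E_g\|\varphi(g)-V^*\rho_0(g)V\|^p\Bigr)^{1/p}\le\Bigl(\E_{g,b}\|\varphi(g)\varphi(b)-\varphi(gb)\|^p\Bigr)^{1/p}\le\varepsilon ,\]
valid for every unitarily invariant seminorm and every $1\le p<\infty$, with no loss of constant. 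What remains is to shrink $\rho_0$ (currently living on the whole corner $q\cM_\infty q$) to a subrepresentation on a small invariant projection, and to modify $V$ accordingly.

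For the rounding step: from $V^*\rho_0(g)V\approx\varphi(g)$ and unitarity of $\varphi(g)$ one deduces that $\rho_0(g)V$ is close to $V\varphi(g)$, hence that $VV^*$ is approximately $\rho_0$-invariant, quantitatively with the same $\varepsilon$ — immediate for the $L_2$ norm, and for a general unitarily invariant seminorm via the ``column vector'' seminorm inequalities. Set $X:=\E_{g}\rho_0(g)\,VV^*\,\rho_0(g)^*$; then $X$ is $\rho_0$-invariant, $0\le X\le q$, $\tau(X)=\tau(VV^*)=\tau(1_\cM)$, $X$ is close to $VV^*$, and $\tau(X-X^2)=O(\varepsilon^2)$. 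Let $P:=\mathbf 1_{[1/2,1]}(X)$: it is a $\rho_0$-invariant projection, so $\pi(g):=\rho_0(g)P=P\rho_0(g)P$ is a genuine representation $G\to\cU(P\cM_\infty P)$, and since $|\mathbf 1_{[1/2,1]}(t)-t|\le 2t(1-t)$ on $[0,1]$, $P$ is close to $X$, hence to $VV^*$. Because $V^*(q-P)V$ then has small operator norm, $PV$ has a polar decomposition $PV=w|PV|$ with $w\in P\cM_\infty 1_\cM$ a genuine isometry and $w$ close to $V$ (the perturbation bounds $\|\,|PV|-1_\cM\|$ and $\|w-V\|$ being controlled by the operator-monotone functional-calculus inequalities for unitarily invariant seminorms from \cite{orthonormalisation}). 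Finally $w^*\pi(g)w=w^*\rho_0(g)w$ differs from $V^*\rho_0(g)V$ by at most $2\|w-V\|$, and $ww^*$ differs from $VV^*$ by at most $2\|w-V\|$; combining with the dilation estimate and the triangle inequality gives \eqref{eq:varphi_close_to_rep} and \eqref{eq:P_of_trace_almost_1}.

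The main obstacle is the rounding step, and in particular the requirement that \emph{every} estimate be carried out in an arbitrary unitarily invariant seminorm rather than just the $L_2$ norm — in the $L_2$ case orthogonal projections, Cauchy--Schwarz and spectral cut-offs make everything transparent, whereas in general one must replace these by the comparison and functional-calculus inequalities for such seminorms. Once that is in place, upgrading the internal $O(\varepsilon)$ bounds to the stated constants $13$ and $4$ is a bookkeeping matter of counting the triangle inequalities used in the last two steps.
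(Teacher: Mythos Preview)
Your outline follows the paper's proof closely: dilate via the regular representation to get an isometry $V$ with $V^*\lambda(g)V=\tilde\varphi(g)\approx\varphi(g)$, average $VV^*$ over $G$ to a $\lambda$-invariant operator $A$ (your $X$), take $P=\mathbf 1_{[1/2,1]}(A)$, and extract an isometry from $PV$. The dilation estimate is correct. Two steps in the rounding, however, do not go through as you describe.

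First, you argue that $X$ is close to $VV^*$ via approximate $\rho_0$-invariance, which requires $\|\rho_0(g)V-V\varphi(g)\|$ to be small in the general seminorm; but this element is a \emph{column} in $\cM_\infty$ whose entries are the defects $\varphi(kg)-\varphi(k)\varphi(g)$, and the hypothesis controls only their $L_p$-in-$(g,k)$ average, not the column seminorm. (Your aside ``$\tau(X-X^2)=O(\varepsilon^2)$'' is a trace statement and does not give the needed seminorm bound either.) The paper never bounds $\|A-VV^*\|$. Instead it computes $V^*AV=\E_g\tilde\varphi(g)^*\tilde\varphi(g)$, whence $\|1_\cM-V^*AV\|\le\E_g\|1-\tilde\varphi(g)^*\tilde\varphi(g)\|\le 2\varepsilon$ directly, and then uses the identity $A-A^2=\sqrt{1-A}\,A\,\sqrt{1-A}$ together with $[\lambda(g),\sqrt{1-A}]=0$ and unitary invariance to get $\|A-A^2\|\le\|V^*(1-A)V\|\le 2\varepsilon$. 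The operator inequalities $1-P\le 2(1-A)$ and $P\le 2A$ then yield all the seminorm bounds on $1_\cM-V^*PV$ and $P-PVV^*P$ without any column estimate. Second, the polar part of $PV$ is only a \emph{partial} isometry $w_0$: you have seminorm control on $1_\cM-V^*PV$, not operator-norm control, so $|PV|$ need not be invertible. The paper extends $w_0$ to a full isometry $w=w_0+w_1$ with $w_1^*w_1=1_\cM-w_0^*w_0$ and $w_1w_1^*\perp P$, enlarges $P$ to $P'=P+w_1w_1^*$, and lets $\pi$ act trivially there; the constant $13=5+2\cdot4$ then comes from $\|w-PV\|=\|1_\cM-|PV|\|\le\|1_\cM-V^*PV\|\le 4\varepsilon$.
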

By isometry we mean $w^*w=1_\cM$. As a consequence, if $\|\cdot\|$ is the norm in $L_q(\cM,\tau)$, \eqref{eq:P_of_trace_almost_1} is equivalent to $\tau(P) \leq 1+(4\varepsilon)^q$.
\begin{remark}\label{rem:added_by_referee}(suggested by the referee) It is possible to arrange $w=1_\cM$ in the conclusion of Theorem~\ref{thm:averageGowersHatami}, so that $P$ becomes a projection $\geq 1_\cM$ such that $\|P-1_\cM\|\leq 4\varepsilon$. Indeed, replace $P$ by $U^* P U$ and $\pi(g)$ by $U^* \pi(g) U$, where $U \in \cM_\infty$ is a unitary such that $w=PU 1_\cM$, for example $U=\begin{pmatrix} w & 1_\infty - ww^*\\1_\infty-w^*w& w^*\end{pmatrix} \in M_2(\cM_\infty) \simeq \cM_\infty$.
\end{remark}

This theorem follows by the same proof as in \cite{MR3867328}. We provide a detailed proof for completeness, with slightly better constants. We first prove an intermediary statement, with better constants but the isometry $w$ replaced by a contraction.

In the proof, we use two general facts about unitarily invariant norms \cite[Proposition 2.6]{MR3867328}:
\begin{equation}\label{eq:unitarilyinvariantnormC*} \forall X \in \cM, \|X^*X\|=\|XX^*\|,
\end{equation}
\begin{equation}\label{eq:unitarilyinvariantnormorder}
0 \leq A \leq B \implies \|A\|\leq \|B\|.
\end{equation}

\begin{lemma}\label{lemma:averageGowersHatami_non_isometry} Let $G$ be a finite group, $1 \leq p<\infty$ and $\varphi:G \to \cU(\cM)$ be a map. Let $\|\cdot\|$ be a unitarily invariant semi-norm on $\cM_\infty$. Assume that
  \[ \left(\E_{g,h \in G} \|\varphi(gh)-\varphi(g)\varphi(h)\|^p\right)^{\frac 1 p} \leq \varepsilon.\]
  Then there is a projection $P \in \cM_\infty$, a unitary representation $\pi : G \to \cU(P\cM_\infty P)$ and an element $X \in P \cM_\infty 1_\cM$ of operator norm $\|X\|_\infty\leq 1$ such that
  \begin{equation}\label{eq:varphi_close_to_repX} \left(\E_g \|\varphi(g)-X^* \pi(g) X\|^p\right)^{\frac 1 p} \leq 5 \varepsilon,
  \end{equation}
\begin{equation}\label{eq:XstarX_close_to1} \|1_\cM-X^*X\| \leq 4 \varepsilon,
\end{equation}
and
\begin{equation}\label{eq:XXstar_close_toP} \|P-XX^*\| \leq 4 \varepsilon,
\end{equation}
\end{lemma}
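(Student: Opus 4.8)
The plan is to follow the Gowers--Hatami strategy, which manufactures a representation out of an approximate one by an averaging (group-cohomological) trick. First I would define, on the Hilbert space $L_2(\cM_\infty,\tau_\infty)$ (or working directly in $\cM_\infty$), the element $X_0 = \E_{g\in G}\varphi(g)^* e \varphi(g)$ where $e = 1_\cM = 1\otimes e_{1,1}$; more precisely the natural object is the "averaged projection'' $\Pi = \E_{g\in G}\varphi(g)^* 1_\cM \varphi(g) \in \cM_\infty$, a positive contraction. The representation will live on the range of a spectral projection $P$ of $\Pi$, say $P = 1_{[1/2,1]}(\Pi)$ or rather one builds $\pi$ first by the formula $\pi(k)$ acting so that $\pi(k)\varphi(g)^* 1_\cM$ "$\approx$'' $\varphi(kg)^* 1_\cM$. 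Concretely, on the $\cM_\infty$-module generated by the columns $\varphi(g)^*1_\cM$, define $\rho(k)$ by $\rho(k)\colon \varphi(g)^*1_\cM \xi \mapsto \varphi(k^{-1}g)^*1_\cM\xi$; the approximate-homomorphism hypothesis makes this well defined up to an error controlled by $\varepsilon$, and then $\pi$ is obtained from $\rho$ by polar decomposition / compression to $P\cM_\infty P$.

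The second step is the bookkeeping of constants. Set $X = 1_\cM \varphi(g)$ summed appropriately — in fact the cleanest route is: let $X = P v$ where $v \in \cM_\infty 1_\cM$ is the "column vector'' with entries $\frac{1}{\sqrt{|G|}}\varphi(g)^*$ reindexed into $B(\ell_2)$, normalized so that $v^* v = \E_g \varphi(g)\varphi(g)^* = 1_\cM$ exactly (this uses only that the $\varphi(g)$ are unitaries, no approximation). Then $X^* X = v^* P v$, and the point is that $\|1_\cM - v^*Pv\| = \|v^*(1-P)v\|$ is small: this is where one uses the hypothesis, via $\|1_\cM - \Pi\| \le$ (something)$\cdot\varepsilon$ together with $1-P \le c(1-\Pi)$ on the relevant spectral range, to get \eqref{eq:XstarX_close_to1} with constant $4$. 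The bound \eqref{eq:XXstar_close_toP} on $\|P - XX^*\| = \|P - Pvv^*P\|$ follows similarly since $vv^*$ is a projection and $\Pi = $ (compression of) $vv^*$ under the group action, so $P - Pvv^*P$ is again controlled by $1-\Pi$. Finally, for \eqref{eq:varphi_close_to_repX} one computes $X^*\pi(g)X$ and compares with $\varphi(g)$ entrywise: $X^*\pi(g)X = \E_{h,k}\varphi(h)(\text{matrix unit})\pi(g)(\text{matrix unit})\varphi(k)^*$ telescopes, after using $\pi(g)\approx\rho(g)$ and the defining relation of $\rho$, to $\E_{h}\varphi(h)\varphi(h^{-1}g) \approx \varphi(g)$ in the seminorm $\|\cdot\|$ on average over $g$, with total error bounded by summing the several $\varepsilon$-contributions to $5\varepsilon$ (here Minkowski's inequality in $L_p(G)$ applied to the seminorm, plus the triangle inequality, is what assembles the constant).

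I expect the main obstacle to be the honest verification that $\rho$ (equivalently $\pi$) is genuinely a unitary representation on $P\cM_\infty P$ — not merely approximately multiplicative — which forces the construction to be the "exact'' averaged object rather than a naive truncation, and then tracking how the errors $\|\varphi(gh)-\varphi(g)\varphi(h)\|$ propagate (with the correct powers of $p$ and applications of the triangle inequality for the seminorm and of Minkowski in $L_p(G)$) to land exactly on the constants $5$ and $4$ rather than something larger. The passage from this lemma to Theorem~\ref{thm:averageGowersHatami} is then the routine step of replacing the contraction $X$ by an isometry $w$ (using that $\|1_\cM - X^*X\|\le 4\varepsilon$ lets one correct $X$ on a small-seminorm set), which inflates $5\varepsilon$ to $13\varepsilon$.
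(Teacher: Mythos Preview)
Your overall strategy---the Gowers--Hatami averaging via a column isometry $v\in\cM_\infty 1_\cM$ with entries $|G|^{-1/2}\varphi(g)^*$---is the same as the paper's. However, your proposal is confused on the two decisive points.

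First, your averaged operator $\Pi = \E_g \varphi(g)^* 1_\cM \varphi(g)$ is identically $1_\cM$ (the $\varphi(g)$ are unitaries in $\cM$), so the spectral projection $1_{[1/2,1]}(\Pi)$ is just $1_\cM$ and gives no information. The correct object is $A = \E_g \lambda(g)\, vv^*\, \lambda(g)^{-1}$, the average of the rank-one projection $vv^*$ under conjugation by the left regular representation on the $\ell_2(G)$ factor; one then takes $P=\chi_{[1/2,1]}(A)$ and $X=Pv$. The inequality you need is not $\|1_\cM-\Pi\|\lesssim\varepsilon$ (which is $0$) but $\|1_\cM - v^*Av\|\le 2\varepsilon$, obtained from $v^*Av = \E_g \tilde\varphi(g)^*\tilde\varphi(g)$ where $\tilde\varphi(g)=\E_h\varphi(h)^*\varphi(hg)$; then $1-P\le 2(1-A)$ gives \eqref{eq:XstarX_close_to1}, and an analogous manipulation with $P\le 2A$ gives \eqref{eq:XXstar_close_toP}.

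Second, and more importantly, you flag the main obstacle as verifying that $\pi$ is a genuine unitary representation and suggest obtaining it from your $\rho$ ``by polar decomposition/compression''---but this is exactly what is \emph{avoided} in the actual proof. The point is that one never constructs $\pi$ from $\varphi$: one simply takes $\pi(g) = (\lambda(g)\otimes 1)|_{P}$, the restriction of the left regular representation, which is a bona fide representation from the start. The only thing to check is that $P$ commutes with all $\lambda(g)$, and this is automatic because $A$ commutes with $\lambda(g)$ by construction and $P$ is a spectral function of $A$. Your $\rho(k):\varphi(g)^*1_\cM\xi\mapsto\varphi(k^{-1}g)^*1_\cM\xi$ is, once you index by $g$ rather than by the vectors themselves, nothing other than $\lambda(k)\otimes 1$; the ``approximate well-definedness'' worry disappears once you see this. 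With this in place, \eqref{eq:varphi_close_to_repX} follows from $\E_g\|\varphi(g)-\tilde\varphi(g)\|^p\le\varepsilon^p$ together with $\|v^*(1-P)\lambda(g)v\|\le\|v^*(1-P)v\|\le 4\varepsilon$.
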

\begin{proof} Let $\cH$ be the Hilbert space on which $\cM$ is realized. Define $V \colon \cH \to L_2(G,\cH)$ by $(V\xi)(g)  = \varphi(g^{-1})\xi$. It is clear that $V x= (1_{L_2(G)}\otimes x) V$ for every $x \in \cM'$, so (if we identify $L_2(G)$ with a subspace of $\ell_2$ with orthogonal projection $Q$), $V$ is an isometry in $Q \cM_\infty 1_\cM$. Its adjoint is $V^*f = \E_g \varphi(g^{-1})^* f(g)$, so that
   \[ V^* (\lambda(g)\otimes 1_\cH) V = \E_h \varphi(h)^* \varphi(h g)=: \tilde \varphi(g).\]
   Observe that
\[ \E_g \|\varphi(g) - \tilde\varphi(g)\|^p \leq \E_{g,h} \|\varphi(g) - \varphi(h)^* \varphi(hg)\|^p \leq \varepsilon^p.\]
We are almost done, except that $\tau_\infty(Q) = |G|$. If $VV^*$ did commute with the representation $\lambda\otimes 1_\cH$, we would be done with $X=V$ and $P=VV^*$ and $\pi(g)= (\lambda(g)\otimes 1_\cH) VV^*$. In general, we define $X=PV$ and $\pi$ to be the restriction of $\lambda \otimes 1_{\cH}$ on the image of $P$, where $P$ is the spectral projection $\chi_{[1/2,1]}(A)$, for $A$ the conditional expectation of $VV^*$ on $\cM \otimes \lambda(G)'$, that is
\[ A:= \E_g \lambda(g) VV^* \lambda(g^{-1}).\]
It remains to justify \eqref{eq:varphi_close_to_repX}, \eqref{eq:XstarX_close_to1} and \eqref{eq:XXstar_close_toP}. To do so we first observe that, writing $1-\tilde{\varphi}(g)^* \tilde\varphi(g) = \varphi(g)^*(\varphi(g) - \tilde{\varphi}(g)) + (\varphi(g) - \tilde{\varphi}(g))^* \tilde{\varphi}(g)$, we can bound by H\"older's inequality
\begin{equation}\label{eq:tildevarphi_close_to_1} \E_g \|1-\tilde{\varphi}(g)^* \tilde\varphi(g)\| \leq 2\E_g \|\varphi(g) - \tilde{\varphi}(g)\| \leq 2{\varepsilon}.
\end{equation}
We deduce
   \[ \| 1-V^*A V\| = \|\E_g (1-\tilde\varphi (g)^*\tilde\varphi(g))\| \leq 2{\varepsilon}.\]
   Therefore (using that $\lambda(g)$ commutes with $\sqrt{1-A}$ and \eqref{eq:unitarilyinvariantnormC*})
   \begin{align*} \|A - A^2\| &= \| \sqrt{1-A} \E_g \lambda(g) VV^* \lambda(g) \sqrt{1-A}\|\\
   & \leq \| \sqrt{1-A} V V^* \sqrt{1-A}\| = \|V^*(1-A) V\| \leq 2{\varepsilon}.
   \end{align*}
We now prove \eqref{eq:XstarX_close_to1}. Using $(1-P) \leq 2(1-A)$ and \eqref{eq:unitarilyinvariantnormorder}, we have
   \[ \|1_\cM - X^*X\| = \|V^*(1-P)V\| \leq 2 \|V^* (1-A) V\| \leq 4 {\varepsilon}.\]

   We can now turn to \eqref{eq:varphi_close_to_repX}. By the triangle inequality,
   \begin{multline*} \left(\E_g \|\varphi(g)-X^* \pi(g) X\|^p\right)^{\frac 1 p} \\\leq \left(\E_g \|\varphi(g)-\widetilde{\varphi}(g)\|^p\right)^{\frac 1 p} + \left(\E_g \|\widetilde \varphi(g)-X^* \pi(g) X\|^p\right)^{\frac 1 p}.
   \end{multline*}
   The first term is $\leq \varepsilon$. The second term is (by \cite[Corollary 2.8]{MR3867328})
   \begin{align*} \left(\E_g \|V^*(1-P) \lambda(g)V\|^p\right)^{\frac 1 p} & \leq \|V^*(1-P)V\| \leq 4\varepsilon.
   \end{align*}
The last inequality is the already proven \eqref{eq:XstarX_close_to1}.

Finally, we prove \eqref{eq:XXstar_close_toP}. Using that $P \leq 2A$,
   \begin{align*} \|P - XX^*\| &= \|P(1-VV^*)P\| \\&= \|(1-VV^*)P(1-VV^*)\|\\ &\leq 2 \|(1-VV^*) A (1-VV^*)\|.
   \end{align*}
   By the triangle inequality, this is less than
   \begin{multline*} 2 \E_g \|(1-VV^*) \lambda(g) VV^* \lambda(g)^* (1-VV^*)\|\\ = 2 \E_g \|V^* \lambda(g)^* (1-VV^*) \lambda(g) V\| = 2 \E_g \|1-\tilde{\varphi}(g)^* \tilde\varphi(g)\|
   \end{multline*}
   which is less than $4\varepsilon$ by \eqref{eq:tildevarphi_close_to_1}.
\end{proof}
\begin{proof}[Proof of Theorem~\ref{thm:averageGowersHatami}]
Let $X,P,\pi$ be given by Lemma~\ref{lemma:averageGowersHatami_non_isometry}. Write $X = w_0 |X|$ the polar decomposition of $X$. We can find a partial isometry such $w_1 \in \cM_\infty$ such that $w_0^* w_0 +w_1 ^* w_1=1_\cM$ and $w_1 w_1^*$ is orthogonal to $P$. Define $P'=P+w_1 w_1^*$, and extend $\pi$ to a unitary representation on $P'\cM_\infty P'$ by declaring that $\pi(g)$ is the identity on $w_1 w_1^*$. If we define $w=w_0+w_1$, $w$ is then an isometry in $P' \cM_\infty 1_\cM$ such that $X=w |X|$.
  We can then bound
  \[ \| w-X\| = \|1_\cM - |X|\| \leq \|1_\cM - X^*X\| \leq 4\varepsilon,\]
  where the first inequality \eqref{eq:unitarilyinvariantnormorder} is because $1-t \leq 1-t^2$ for every $0 \leq t \leq 1$, and the second inequality is \eqref{eq:XstarX_close_to1}. Moreover, by \eqref{eq:XXstar_close_toP} we have
  \[ \|P'-ww^*\| = \|P-w_0 w_0^*\| \leq \|P-X X^*\| \leq 4\varepsilon.\]

  Therefore, we obtain
  \[ \left(\E_g \|\varphi(g)-w^* \pi(g) w\|^p\right)^{\frac 1 p} \leq \left(\E_g \|\varphi(g)-X^* \pi(g) X\|^p\right)^{\frac 1 p} + 2 \|X-w\|.\]
  This is less than $13 \sqrt{\varepsilon}$ by \eqref{eq:varphi_close_to_repX}.
\end{proof}
The next lemma shows that, if $\varphi$ is assumed to behave well on a subgroup, then $\varphi$ and $w^* \pi w$ are close on this subgroup. The lemma is clearly false without the additional assumption \eqref{eq:phi_H_left_equivariant} or \eqref{eq:phi_H_right_equivariant} : if $H$ has very small index, perturbing arbitrarily $\varphi$ on $H$ will not affect much the hypothesis nor the conclusion of Theorem~\ref{thm:averageGowersHatami}, but the conclusion of the lemma cannot hold.
\begin{lemma}\label{lem:GowersHatami_subgroup} Let $G$, $\varphi$, $\varepsilon$, $P$, $\pi$ and $w$ be as in the hypothesis and conclusion of Theorem~\ref{thm:averageGowersHatami}. Let $H<G$ is a subgroup, and assume either
  \begin{equation}\label{eq:phi_H_left_equivariant} \forall h\in H, \forall g\in G,\  \varphi(hg) = \varphi(h) \varphi(g)
  \end{equation}
  or
  \begin{equation}\label{eq:phi_H_right_equivariant}\forall h\in H, \forall g\in G,\  \varphi(gh) = \varphi(g) \varphi(h).
  \end{equation}
  Then
  \[ \left(\E_{h \in H} \|\varphi(h)-w^* \pi(h) w\|^p\right)^{\frac 1 p}< 38 \varepsilon.\]
\end{lemma}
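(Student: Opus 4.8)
The plan is to exploit the equivariance hypothesis to improve the bound from Theorem~\ref{thm:averageGowersHatami} on the subgroup $H$. Suppose we are in the case \eqref{eq:phi_H_left_equivariant}. Then for every $h \in H$ and $g \in G$ we have $\varphi(hg) = \varphi(h)\varphi(g)$ exactly, so $\varphi(h)^*\varphi(hg) = \varphi(g)$. The key observation is that the averaged map $\tilde\varphi$ that appears in the proof of Lemma~\ref{lemma:averageGowersHatami_non_isometry}, namely $\tilde\varphi(g) = \E_{h'\in G}\varphi(h')^*\varphi(h'g)$, is not literally available to us here because we only control $\varphi$ on $G$ as given by the theorem statement; instead I would work directly with $\varphi$ and $w^*\pi(\cdot)w$ and use that $\varphi$ is a genuine homomorphism relative to $H$. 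Concretely, for $h \in H$ write, using left-equivariance, $\varphi(hg) = \varphi(h)\varphi(g)$ for all $g$, hence for any fixed $g$,
\[ \varphi(h) - w^*\pi(h)w = \big(\varphi(hg) - w^*\pi(hg)w\big)\varphi(g)^* + w^*\pi(h)\big(\pi(g)w\varphi(g)^* - w\big). \]
Averaging over $g\in G$ and using the triangle inequality together with unitary invariance of $\|\cdot\|$ (so that right multiplication by the unitary $\varphi(g)^*$ and left multiplication by $\pi(h)$ cost nothing), I get
\[ \|\varphi(h) - w^*\pi(h)w\| \leq \E_g\|\varphi(hg) - w^*\pi(hg)w\| + \E_g\|\pi(g)w\varphi(g)^* - w\|. \]
Now I would take the $L^p(\E_{h\in H})$ norm of both sides. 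For the first term, since $hg$ ranges over $G$ with the uniform distribution as $g$ does (for each fixed $h$), Jensen/Hölder gives $\big(\E_{h\in H}(\E_g\|\varphi(hg)-w^*\pi(hg)w\|)^p\big)^{1/p} \leq \big(\E_{g\in G}\|\varphi(g)-w^*\pi(g)w\|^p\big)^{1/p} \leq 13\varepsilon$ by \eqref{eq:varphi_close_to_rep}. The second term does not depend on $h$, and again by Jensen it is $\leq \big(\E_{g\in G}\|\pi(g)w\varphi(g)^* - w\|^p\big)^{1/p} = \big(\E_{g\in G}\|\varphi(g) - w^*\pi(g)w\|^p\big)^{1/p} \cdot(\text{up to unitary invariance and adjoint})$, so this is again $\leq 13\varepsilon$ — giving a bound around $26\varepsilon$, which is within the claimed $38\varepsilon$; the slack absorbs the fact that one should be a touch more careful and possibly replace $w$ by $X$ at an intermediate step (picking up the extra $4\varepsilon$ from \eqref{eq:P_of_trace_almost_1} / \eqref{eq:XXstar_close_toP}), still comfortably under $38\varepsilon$.

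The case \eqref{eq:phi_H_right_equivariant} is symmetric: one writes $\varphi(gh) = \varphi(g)\varphi(h)$ and runs the same argument averaging on the left, i.e. using $\varphi(h) - w^*\pi(h)w = \varphi(g)^*\big(\varphi(gh) - w^*\pi(gh)w\big) + \big(\varphi(g)^*w^*\pi(g) - w^*\big)\pi(h)w$, and then the relevant error terms are again controlled by \eqref{eq:varphi_close_to_rep}. One subtlety worth flagging: in the second term, after moving a unitary, one needs $\|w^*\pi(g)w\varphi(g)^* - 1\|$-type quantities to be comparable to $\|\varphi(g) - w^*\pi(g)w\|$; since $\varphi(g)$ is unitary and $w$ is an isometry (so $w^*\pi(g)w$ has norm $\leq 1$ but need not be unitary), one uses $\|AB^* - 1\| \leq \|A - B\|$ when $B$ is unitary and $\|A\|_\infty\le 1$, which holds for any unitarily invariant seminorm because $AB^* - 1 = (A-B)B^*$.

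The main obstacle I expect is bookkeeping of the constants rather than any conceptual difficulty: one must decide at which point to pass between $X$, $w$, $\varphi$, $\tilde\varphi$, and the representation, and each such passage costs a multiple of $\varepsilon$ via \eqref{eq:varphi_close_to_rep}, \eqref{eq:P_of_trace_almost_1}, \eqref{eq:XstarX_close_to1}, \eqref{eq:XXstar_close_toP}, or \eqref{eq:tildevarphi_close_to_1}. The generous constant $38$ in the statement suggests the author does not optimize and simply chains triangle inequalities; I would do the same, making sure only that every invocation of unitary invariance is legitimate (left/right multiplication by genuine unitaries $\varphi(g)$, $\pi(h)$, not by the contractions $w^*\pi(g)w$).
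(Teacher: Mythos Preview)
Your approach is correct and in fact more direct than the paper's. Both arguments exploit the same underlying observation---that under left-equivariance, $hg$ is uniform on $G$ when $g$ is, so the error at $h$ can be compared to the $G$-averaged error from \eqref{eq:varphi_close_to_rep}---but the implementations differ. The paper sets $\psi(g)=w^*\pi(g)w$, uses $\|\psi(g_1g_2)-\psi(g_1)\psi(g_2)\|\le 4\varepsilon$ to conclude $(\E_{g,h}\|\varphi(h)\varphi(g)-\psi(h)\psi(g)\|^p)^{1/p}\le 17\varepsilon$, extracts a single $X=\psi(g)\varphi(g)^{-1}$ with $(\E_h\|\varphi(h)-\psi(h)X\|^p)^{1/p}\le 17\varepsilon$, and then runs a second averaging over $H\times H$ to eliminate $X$. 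Your one-line decomposition $\varphi(h)-\psi(h)=(\varphi(hg)-\psi(hg))\varphi(g)^*+w^*\pi(h)(\pi(g)w\varphi(g)^*-w)$ bypasses the auxiliary $X$ entirely and gives the bound in one averaging step; it even yields a slightly better constant (about $30\varepsilon$ once the correction below is made).

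The only point you should make precise is the one you already flag: $\|\pi(g)w\varphi(g)^*-w\|$ is \emph{not} equal to $\|\varphi(g)-w^*\pi(g)w\|$ by unitary invariance, because $w$ is not unitary. The clean fix is to split with the projection $ww^*$: since $(P-ww^*)w=0$,
\[
\pi(g)w\varphi(g)^*-w = w\big(w^*\pi(g)w-\varphi(g)\big)\varphi(g)^* + (P-ww^*)\pi(g)w\varphi(g)^*,
\]
so $\|\pi(g)w\varphi(g)^*-w\|\le \|\varphi(g)-w^*\pi(g)w\|+\|P-ww^*\|\le \|\varphi(g)-w^*\pi(g)w\|+4\varepsilon$ by \eqref{eq:P_of_trace_almost_1}. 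With this, your second term is bounded by $13\varepsilon+4\varepsilon=17\varepsilon$ and the total by $30\varepsilon<38\varepsilon$, as you anticipated.
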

An inspection of the proof reveals that the statement holds (with $38$ replaced by another constant) if \eqref{eq:phi_H_left_equivariant} is replaced by the weaker hypothesis that
\[\left(\E_{h\in H,g \in G} \|\varphi(hg) - \varphi(h) \varphi(g)\|^p\right)^{\frac 1 p} + \left(\E_{h \in H, h'\in H} \|\varphi(hh') - \varphi(h) \varphi(h')\|^p\right)^{\frac 1 p} = O(\varepsilon). \]
\begin{proof}
Assume \eqref{eq:phi_H_left_equivariant}. The case when we assume \eqref{eq:phi_H_right_equivariant} is proved in the same way and left to the reader. Define $\psi(g) = w^* \pi(g) w$. This is not a representation of $G$, but almost: for every $g_1,g_2 \in G$,
  \begin{multline}\label{eq:psi_almost_rep} \| \psi(g_1g_2) - \psi(g_1)\psi(g_2)\| \\= \| w^* \pi(g_1) (P-ww^*) \pi(g_2) w\| \leq \|P-ww^*\| \leq 4\varepsilon.
  \end{multline}
the last inequality is the assumption~\eqref{eq:P_of_trace_almost_1}. So it follows from \eqref{eq:phi_H_left_equivariant} and the triangle inequality that
  \begin{multline*} (\E_{g \in G} \E_{h \in H} \|\varphi(h)\varphi(g) - \psi(h) \psi(g)\|^p)^{\frac 1 p} \\\leq 4\varepsilon + (\E_{g \in G} \E_{h \in H} \|\varphi(hg) - \psi(hg)\|^p)^{\frac 1 p} \leq 17 {\varepsilon}.
  \end{multline*}
The last inequality is \eqref{eq:varphi_close_to_rep} because, for fixed $h \in H$, $hg$ is uniformly distributed in $G$ when $g$ is. In particular, there is $X$ of operator norm $\leq 1$ (namely $X=\psi(g) \varphi(g)^{-1}$ for some $g$) such that
  \begin{equation}\label{eq:diff_phi_psiX} (\E_{h \in H} \|\varphi(h) - \psi(h) X\|^p)^{\frac 1 p} \leq 17 {\varepsilon}.
  \end{equation}
  We deduce
  \begin{align*}
    (\E_h \|\varphi(h) - \psi(h)\|^p)^{\frac 1 p} & = (\E_{h_1,h_2} \|\varphi(h_1)\varphi(h_2) - \psi(h_1) \varphi(h_2)\|^p)^{\frac 1 p} \\
    & \leq 17{\varepsilon}+ (\E_{h_1,h_2} \|\varphi(h_1h_2) - \psi(h_1) \psi(h_2) X\|^p)^{\frac 1 p}\\
    &\leq 21{\varepsilon}+ (\E_{h_1,h_2} \|\varphi(h_1h_2) - \psi(h_1h_2)X\|^p)^{\frac 1 2}\\
    &\leq 38{\varepsilon}.
  \end{align*}
    The first inequality is the unitary invariance of the $\|\cdot\|$-norm, the second is \eqref{eq:phi_H_left_equivariant} and \eqref{eq:diff_phi_psiX}, the third is \eqref{eq:psi_almost_rep} and the last is \eqref{eq:diff_phi_psiX} again. This concludes the proof of the lemma.
\end{proof}
We will use the following consequences. For simplicity we restrict to the $2$-norm.
\begin{corollary}\label{cor:averageGowersHatami_products} Let $A,B$ be two finite groups and $U:A \to \cU(\cM)$ and $V : B \to \cU(\cM)$ be two group homomorphisms. If they satisfy
  \[ \E_{a \in A,b \in B} \|[U(a),V(b)]\|_2^2 \leq \varepsilon,\]
  then there is a projection $P \in \cM_\infty$, unitary representation $\tilde{U}: A \to \cU(P\cM_\infty P)$ and $\tilde{V}: B \to \cU(P\cM_\infty P)$ with commuting ranges, and an isometry $w \in P \cM_\infty 1_\cM$ such that
  \[\E_{a \in A} \|U(a)-w^* \tilde U(a) w\|_2^2<1444\varepsilon,\]
  \[\E_{b \in B} \|U(b)-w^* \tilde V(b) w\|_2^2<1444\varepsilon,\]
  \[ \tau_\infty(P)\leq 1+16\varepsilon.\]
\end{corollary}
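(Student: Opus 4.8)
The plan is to deduce this from Theorem~\ref{thm:averageGowersHatami} and Lemma~\ref{lem:GowersHatami_subgroup}, applied to the group $G = A \times B$ together with the map $\varphi \colon G \to \cU(\cM)$, $\varphi(a,b) = U(a)V(b)$. First I would record the multiplicative defect of $\varphi$: since $U$ and $V$ are homomorphisms, for $g_i = (a_i,b_i)$ one has $\varphi(g_1 g_2) - \varphi(g_1)\varphi(g_2) = U(a_1)\bigl[U(a_2),V(b_1)\bigr]V(b_2)$, whose $\|\cdot\|_2$-norm equals $\|[U(a_2),V(b_1)]\|_2$ by unitary invariance. As $(g_1,g_2)$ uniform in $G^2$ makes $(a_2,b_1)$ uniform in $A \times B$, the hypothesis gives $\E_{g_1,g_2 \in G}\|\varphi(g_1 g_2) - \varphi(g_1)\varphi(g_2)\|_2^2 \leq \varepsilon$.

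Next I would apply Theorem~\ref{thm:averageGowersHatami} with $p = 2$ and with $\sqrt\varepsilon$ in the role of $\varepsilon$, obtaining a projection $P \in \cM_\infty$, a unitary representation $\pi \colon G \to \cU(P\cM_\infty P)$ and an isometry $w \in P\cM_\infty 1_\cM$ with $\bigl(\E_g\|\varphi(g) - w^*\pi(g)w\|_2^2\bigr)^{1/2} \leq 13\sqrt\varepsilon$ and $\|P - ww^*\|_2 \leq 4\sqrt\varepsilon$. Since $Pw = w$, both $ww^*$ and $P - ww^*$ are projections with $ww^* \leq P$, so $\tau_\infty(P) - 1 = \tau_\infty(P) - \tau_\infty(w^*w) = \tau_\infty(P - ww^*) = \|P - ww^*\|_2^2 \leq 16\varepsilon$, which is the last asserted inequality. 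I would then set $\tilde U(a) = \pi(a,1_B)$ and $\tilde V(b) = \pi(1_A,b)$; these are unitary representations of $A$ and $B$ in $P\cM_\infty P$, and because $(a,1_B)$ commutes with $(1_A,b)$ in $G$ and $\pi$ is a homomorphism, their ranges commute.

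The step that does the real work is the passage from the joint estimate on $G = A \times B$ to the two separate marginal estimates: Theorem~\ref{thm:averageGowersHatami} only controls the average of $\|\varphi(g) - w^*\pi(g)w\|_2^2$ over all of $A \times B$, which does not by itself bound either marginal average. This is exactly what Lemma~\ref{lem:GowersHatami_subgroup} is for. Taking the subgroup $H = A \times \{1_B\}$, the hypothesis \eqref{eq:phi_H_left_equivariant} holds because for $h = (a,1_B)$ and $g=(a',b')$ we have $\varphi(hg) = U(aa')V(b') = U(a)\cdot U(a')V(b') = \varphi(h)\varphi(g)$; the lemma then gives $\bigl(\E_{h\in H}\|\varphi(h) - w^*\pi(h)w\|_2^2\bigr)^{1/2} < 38\sqrt\varepsilon$, i.e.\ $\E_{a\in A}\|U(a) - w^*\tilde U(a)w\|_2^2 < 1444\varepsilon$ since $\varphi(a,1_B) = U(a)$ and $\pi(a,1_B) = \tilde U(a)$. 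Symmetrically, with $K = \{1_A\}\times B$ the hypothesis \eqref{eq:phi_H_right_equivariant} holds, now via $\varphi(gk) = U(a')V(b'b) = U(a')V(b')\cdot V(b) = \varphi(g)\varphi(k)$ for $k=(1_A,b)$, and Lemma~\ref{lem:GowersHatami_subgroup} yields $\E_{b\in B}\|V(b) - w^*\tilde V(b)w\|_2^2 < 1444\varepsilon$. I do not expect a genuine obstacle here; the only thing worth emphasizing is that this final localizing step really needs the subgroup refinement of the Gowers--Hatami theorem rather than the plain statement, and that the equivariance hypotheses are available precisely because $\varphi$ restricts to $U$ (resp.\ $V$) on each factor and is honestly multiplicative against the other factor.
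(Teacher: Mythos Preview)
Your proof is correct and follows essentially the same approach as the paper: define $\varphi(a,b)=U(a)V(b)$ on $G=A\times B$, compute the multiplicative defect as a commutator, apply Theorem~\ref{thm:averageGowersHatami} with $p=2$, and then invoke Lemma~\ref{lem:GowersHatami_subgroup} for the subgroups $A$ (left equivariance) and $B$ (right equivariance) to pass from the global estimate to the two marginal ones. Your write-up actually makes the derivation of $\tau_\infty(P)\leq 1+16\varepsilon$ from $\|P-ww^*\|_2\leq 4\sqrt{\varepsilon}$ more explicit than the paper does.
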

\begin{proof} Consider $G=A \times B$ and define $\varphi:A \times B \to \cU(\cM)$ by $\varphi(a,b) = U(a) V(b)$. Then for $g=(a,b)$ and $h=(a',b')$, by unitary invariance of the $\|\cdot\|_2$-norm
  \begin{align*} \|\varphi(gh) - \varphi(g) \varphi(h)\|_2 &= \| U(aa') V(bb') - U(a) V(b) U(a') V(b')\|_2\\ &= \|[U(a'),V(b)]\|_2,
  \end{align*}
  so the assumption is exactly that 
  \[ \E_{g,h} \|\varphi(gh) - \varphi(g) \varphi(h)\|_2^2 \leq \varepsilon.\]
  By applying Theorem~\ref{thm:averageGowersHatami} for the norm on $L_2(\cM_\infty,\tau_\infty)$ and $p=2$, we obtain a projection $P$ and, an isometry $w$ and a representation $\pi:A \times B \to \cU(P\cM_\infty P)$ satisfying
  \[ \tau(P) \leq 1+16 \varepsilon\]
  and
  \begin{equation}\label{eq:phi_close_to_rho} \E_g \|\varphi(g) - w^* \pi(g) w\|_2^2 \leq 169 \varepsilon.\end{equation}
  If we define $\tilde{U}(a) = \pi(a,1)$ and $\tilde{V}(b) = \pi(1,b)$, these are unitary representations with commuting ranges. Moreover, since $\varphi(a g) =\varphi(a) \varphi(g)$ for every $a \in A$, $g \in G$, and similarly for $B$ on the right, Lemma~\ref{lem:GowersHatami_subgroup} concludes the proof.
\end{proof}

\begin{corollary}\label{cor:averageGowersHatami_central_ext_products} Let $A,B$ be two finite groups and $\gamma:A\times B \to \{-1,1\}$ a map such that $\gamma(a_1 a_2,b_1b_2) = \prod_{i,j} \gamma(a_i, b_j)$.\footnote{This is the same as $\gamma$ being a $2$-cocycle on $A \times B$ whose restriction to $A$ and $B$ is zero, or a group homomorphism $A \to \mathrm{hom}(B,\{-1,1\})$, or a group homomorphism $B \to \mathrm{hom}(A,\{-1,1\})$.} Let $U:A \to \cU(\cM)$ and $V : B \to \cU(\cM)$ be two group homomorphisms. Assume that they satisfy
  \[ \E_{a \in A,b \in B} \|U(a)V(b) - \gamma(a,b) V(b) U(a)\|_2^2 \leq \varepsilon.\]
There is a projection $P \in \cM_\infty$, unitary representations $\tilde U:A \to \cU(P\cM_\infty P)$ and $\tilde V:B \to \cU(P\cM_\infty P)$ and a partial isometry $w \in P \cM_\infty 1_\cM$ such that
  \[ \tilde U(a) \tilde V(b) = \gamma(a,b) \tilde V(b) \tilde U(a) \forall a\in A, b \in B,\]
  \[\E_{a \in A} \|U(a)-w^* \tilde U(a) w\|_2^2< 30000 \varepsilon,\]
  \[\E_{b \in B} \|U(b)-w^* \tilde V(b) w\|_2^2< 30000 \varepsilon,\]
  \[ \tau_\infty(P)\leq 1+16\varepsilon.\]
\end{corollary}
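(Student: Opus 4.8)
The plan is to reduce the twisted statement to the untwisted Gowers--Hatami machinery by replacing $A\times B$ with a suitable central extension. Identify $\{-1,1\}$ with $\Z/2\Z$ and write $z$ for its nontrivial element. Since $\gamma$ is bimultiplicative, $\sigma\big((a_1,b_1),(a_2,b_2)\big):=\gamma(a_2,b_1)$ is a $2$-cocycle on $A\times B$ with values in $\{-1,1\}$ (this is the only place bimultiplicativity is used). Let $\tilde G$ be the associated central extension, concretely $\tilde G=A\times B\times\{-1,1\}$ with product $(a_1,b_1,t_1)(a_2,b_2,t_2)=(a_1a_2,b_1b_2,t_1t_2\gamma(a_2,b_1))$. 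Then $z=(1,1,-1)$ is central of order $2$; using $\gamma(a,1)=\gamma(1,b)=1$, the sets $\tilde A=\{(a,1,1):a\in A\}$ and $\tilde B=\{(1,b,1):b\in B\}$ are subgroups isomorphic to $A$ and $B$, and the commutator of $(a,1,1)$ and $(1,b,1)$ in $\tilde G$ is $\gamma(a,b)\in\{1,z\}$. I would then set $\varphi\colon\tilde G\to\cU(\cM)$, $\varphi(a,b,t)=t\,U(a)V(b)$. By unitary invariance of $\|\cdot\|_2$, for $\tilde g=(a_1,b_1,t_1)$ and $\tilde h=(a_2,b_2,t_2)$ one has $\|\varphi(\tilde g\tilde h)-\varphi(\tilde g)\varphi(\tilde h)\|_2=\|U(a_2)V(b_1)-\gamma(a_2,b_1)V(b_1)U(a_2)\|_2$, and $(a_2,b_1)$ is uniform on $A\times B$ when $(\tilde g,\tilde h)$ is uniform on $\tilde G\times\tilde G$, so the hypothesis becomes $\big(\E_{\tilde g,\tilde h}\|\varphi(\tilde g\tilde h)-\varphi(\tilde g)\varphi(\tilde h)\|_2^2\big)^{1/2}\le\sqrt\varepsilon$. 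Moreover $\varphi(z\tilde g)=\varphi(\tilde g z)=-\varphi(\tilde g)=\varphi(z)\varphi(\tilde g)$ for all $\tilde g$, the restriction of $\varphi$ to $\tilde A$ satisfies \eqref{eq:phi_H_left_equivariant}, and its restriction to $\tilde B$ satisfies \eqref{eq:phi_H_right_equivariant}.

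Next I would feed $\varphi$ through the construction in the proof of Lemma~\ref{lemma:averageGowersHatami_non_isometry}, with $p=2$, the norm of $L_2(\cM_\infty,\tau_\infty)$, and $\sqrt\varepsilon$ playing the role of $\varepsilon$: this produces the isometry $V\colon\cH\to L_2(\tilde G,\cH)$, $(V\xi)(\tilde g)=\varphi(\tilde g^{-1})\xi$, the positive operator $A=\E_{\tilde g}\lambda(\tilde g)VV^*\lambda(\tilde g^{-1})$, its spectral projection $P=\chi_{[1/2,1]}(A)\in\cM_\infty$, the representation $\pi$ of $\tilde G$ on $P\cM_\infty P$ obtained by restricting $\lambda\otimes1_\cH$, and $X\in P\cM_\infty 1_\cM$ with $\|X\|_\infty\le1$ satisfying \eqref{eq:varphi_close_to_repX}, \eqref{eq:XstarX_close_to1}, \eqref{eq:XXstar_close_toP}. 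The hard part — and the reason the statement only asks for a partial isometry — is arranging that $\pi(z)=-1_{P\cM_\infty P}$, so that the commutation relation comes out \emph{exactly}. I would obtain this from the identity $\varphi(z\tilde g)=-\varphi(\tilde g)$: it forces $(V\xi)(z\tilde g)=\varphi(\tilde g^{-1}z)\xi=-(V\xi)(\tilde g)$, i.e.\ $(\lambda(z)\otimes1_\cH)V=-V$; reindexing the average defining $A$ by $\tilde g\mapsto z\tilde g$ and using that $z$ is central and $VV^*\lambda(z)=-VV^*$ gives $(\lambda(z)\otimes1_\cH)A=-A$; since $A\ge0$ this means the range of $A$, hence $\operatorname{supp}(A)$, hence $P$, lies in the $(-1)$-eigenspace of $\lambda(z)\otimes1_\cH$, so $\pi(z)=-1_{P\cM_\infty P}$. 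Accordingly I would take $w=w_0$, the partial isometry of the polar decomposition $X=w|X|$, rather than completing it to an isometry as in the proof of Theorem~\ref{thm:averageGowersHatami}: that completion forces a choice of how $\pi(z)$ acts on an added corner, and there is no reason it can act by $-1$ there. With $w=w_0$ one still has $w\in P\cM_\infty1_\cM$, $ww^*\le P$, $\|X-w\|_2\le\|1_\cM-X^*X\|_2$, and hence, exactly as in the proof of Theorem~\ref{thm:averageGowersHatami}, $\big(\E_{\tilde g}\|\varphi(\tilde g)-w^*\pi(\tilde g)w\|_2^2\big)^{1/2}\le13\sqrt\varepsilon$ and $\|P-ww^*\|_2\le4\sqrt\varepsilon$; since $P-ww^*$ is a projection this gives $\tau_\infty(P)=\tau_\infty(ww^*)+\|P-ww^*\|_2^2\le1+16\varepsilon$.

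Finally I would apply Lemma~\ref{lem:GowersHatami_subgroup} — whose proof uses only the two displayed bounds above and not that $w$ is an isometry — once with $H=\tilde A$ (using \eqref{eq:phi_H_left_equivariant}) and once with $H=\tilde B$ (using \eqref{eq:phi_H_right_equivariant}), obtaining $\E_{a\in A}\|U(a)-w^*\pi(a,1,1)w\|_2^2<(38\sqrt\varepsilon)^2=1444\varepsilon$ and the analogous bound for $V$; in particular both are $<30000\varepsilon$. Set $\tilde U(a)=\pi(a,1,1)$ and $\tilde V(b)=\pi(1,b,1)$, unitary representations of $A$ and $B$ on $P\cM_\infty P$. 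Since $\pi$ is a homomorphism, $(a,1,1)(1,b,1)=(a,b,1)$, $(1,b,1)(a,1,1)=(a,b,\gamma(a,b))=(a,b,1)\,z^{(1-\gamma(a,b))/2}$, and $\pi(z)=-1_{P\cM_\infty P}$, we get
\[\tilde V(b)\tilde U(a)=\pi(a,b,1)\,\pi(z)^{(1-\gamma(a,b))/2}=\gamma(a,b)\,\tilde U(a)\tilde V(b),\]
which is the required relation. Together with $\tau_\infty(P)\le1+16\varepsilon$ and the estimates on $U,V$ this would finish the proof.

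I expect the only real obstacle to be the one flagged above: getting $\pi(z)$ to act by exactly $-1$ while keeping the perturbation of order $\varepsilon$. A naive route — invoke Theorem~\ref{thm:averageGowersHatami} as a black box and then compress $P$ by the $(-1)$-eigenprojection $Q=\tfrac12(P-\pi(z))$ of $\pi(z)$ — only controls $\|Q^\perp w\|_2$ through $\|Q^\perp w\|_2^2=\tau(w^*Q^\perp w)=\|w^*Q^\perp w\|_1\le\|w^*Q^\perp w\|_2=O(\sqrt\varepsilon)$, which loses a square root and yields a useless $O(\varepsilon^{1/2})$ bound; building $\pi(z)=-1$ into the Gowers--Hatami construction via the symmetry $(\lambda(z)\otimes1_\cH)V=-V$ seems unavoidable, and the rest is bookkeeping of constants.
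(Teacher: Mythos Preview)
Your argument is correct, and in fact cleaner than the paper's. Both routes pass through the same central extension $\tilde G$ and the same map $\varphi(a,b,t)=tU(a)V(b)$, and both apply Lemma~\ref{lem:GowersHatami_subgroup} to the subgroups $\tilde A$ and $\tilde B$ at the end. They diverge at the point where one must arrange $\pi(z)=-P$. You open up the construction in Lemma~\ref{lemma:averageGowersHatami_non_isometry} and observe that the identity $(\lambda(z)\otimes1_\cH)V=-V$ propagates to $(\lambda(z)\otimes1_\cH)A=-A$, forcing the spectral projection $P=\chi_{[1/2,1]}(A)$ to already sit inside the $(-1)$-eigenspace of $\lambda(z)\otimes1_\cH$; taking $w$ to be the partial isometry of $X$ (and checking that Lemma~\ref{lem:GowersHatami_subgroup} nowhere uses that $w$ is a full isometry) then finishes with constant $1444$ instead of $30000$. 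This is a genuine simplification.

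However, your final paragraph misjudges the alternative. The ``naive route'' you dismiss is precisely what the paper does, and it does \emph{not} lose a square root. The paper applies Theorem~\ref{thm:averageGowersHatami} as a black box, then invokes Lemma~\ref{lem:GowersHatami_subgroup} a third time with $H=\{1,z\}$ to obtain $\|1_\cM+w^*Zw\|_2^2\le 2888\varepsilon$, passes to $\|P+Z\|_2=O(\sqrt\varepsilon)$ using $\|P-ww^*\|_2\le4\sqrt\varepsilon$, writes $Z=P-2Q$, and replaces $(P,\pi,w)$ by $(Q,Q\pi,w')$ with $w'$ the partial isometry of $Qw$, bounding $\|w-w'\|_2\le2\|P-Q\|_2$. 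Your chain $\|Q^\perp w\|_2^2=\|w^*Q^\perp w\|_1\le\|w^*Q^\perp w\|_2$ is simply a suboptimal estimate: since $P-Q$ is a projection, once $\|P-Q\|_2=O(\sqrt\varepsilon)$ is known one has $\tau_\infty(P-Q)=\|P-Q\|_2^2=O(\varepsilon)$, hence $\|(P-Q)w\|_2^2\le\tau_\infty(P-Q)=O(\varepsilon)$. So the compression approach works too; your symmetry argument just avoids the extra bookkeeping and the larger constants.
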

\begin{proof} Consider $G$, the central extension of $A\times B$ by $\{-1,1\}$ given by $\gamma$. This is a Weyl-Heisenberg-type group, that is the set $A\times B \times \Z/2\Z$ for the group operation
  \[ (a,b,z)(a',b',z') = (aa',bb',\gamma(b,a')zz').\]
  We shall identify $A$, $B$ and $\{-1,1\}$ with the subgroups $\{(a,1,1)\mid a \in A\}$,  $\{(1,b,1)\mid b \in b\}$ and  $\{(1,1,1),(1,1,-1)\}$. 
  Define $\varphi:G \to \cU(\cM)$ by
  \[\varphi(a,b,z) = z U(a) V(b).\] Clearly, it satisfies
  \[ \varphi(a g) = \varphi(a) \varphi(g), \varphi(gb) = \varphi(g) \varphi(b), \varphi(zg) = \varphi(z) \varphi(g)\]
  for every $g \in G$, $a \in A$, $b \in B$, $z \in \{-1,1\}$. Moreover our assumption is equivalent to
  \[ \E_g \|\varphi(gh) - \varphi(g) \varphi(h)\|_2^2 \leq \varepsilon.\]
  We can apply Theorem~\ref{thm:averageGowersHatami}for the $\|\cdot\|_2$-norm and $p=2$ to obtain a projection $P$, a representation $\pi:G \to \cU(P\cM_\infty P)$ and an isometry $w \in P \cM_\infty 1_\cM$ as in Theorem~\ref{thm:averageGowersHatami}. Also, by Lemma~\ref{lem:GowersHatami_subgroup} we know that, for $H=A,B$ or $\{-1,1\}$.
  \begin{equation}\label{eq:varphi_close_to_rho_on_subgroups} \E_{h\in H} \|\varphi(h) - w^* \pi(h) w\|_2^2 \leq 1444 \varepsilon.
  \end{equation}
We are almost at the desired conclusion, except that we do not a priori have the desired (anti)-commutation relation, but only (denoting $Z = \pi(1,1,-1)$)
\[ \pi(a) \pi (b) = Z \pi(b) \pi(a)\textrm{ when }\gamma(a,b)=-1.\]  
So we would be done if $Z$ was equal to $-P$. This is almost the case. Indeed, taking $H = \{-1,1\}$ in \eqref{eq:varphi_close_to_rho_on_subgroups} we see
\[ \|1_\cM + w^* Z w\|_2^2 \leq 2888\varepsilon\]
and we deduce
\[ \| P + Z\|_2 \leq \|ww^* (P+Z)ww^*\|_2 + 2\|P-ww^*\|_2 \leq 38\sqrt{2\varepsilon}+8\sqrt{\varepsilon}.\]
But $Z$ is an order $2$ unitary in $P \cM_\infty P$, so it can be written as $P-2Q$ for a projection $Q \leq P$, which commutes with the range of $\pi$ because $(1,1,-1)$ belongs to the center of $G$. And we have just proved that $\|P-Q\|_2 \leq (19\sqrt{2}+4)\sqrt \varepsilon$. So let us replace $P$ by $Q$, $\pi$ by $g \mapsto Q \pi(g)$ and $w$ by $w'$, the partial isometry in the polar decomposition of $Qw$. We claim that
\[ \|w-w'\|_2 \leq \|w - Qw\|_2 + \|w'-Qw\|_2\leq 2 \|P-Q\|_2.\]
Indeed, the first term is $\|Pw-Qw\|_2 \leq \|P-Q\|_2$, and the second term is
\[ \|(w')^*w' - |Qw|\|_2 \leq \|1_\cM - |Qw|\|_2 \leq \|1_\cM -|Qw|^2\| = \|w^*(P-Q)w\|_2 \leq \|P-Q\|_2.\]
So we deduce for $H=A$ or $H=B$
\begin{align*} \left(\E_{h \in H} \|\varphi(h) - (w')^* \pi(h) w'\|_2^2\right)^{\frac 1 2} &\leq \left(\E_a \|\varphi(h) - w^* \pi(h) w\|_2^2\right)^{\frac 1 2} + 2\|w-w'\|_2\\
  & \leq (38+4(19\sqrt{2}+4))\sqrt{\varepsilon}.
\end{align*}
The last inequality is \eqref{eq:varphi_close_to_rho_on_subgroups}. This proves the corollary.  
\end{proof}

\section{The main result}\label{sec:main}
The main new contribution in this note is the following result. It is expressed in term of the spectral gaps $\kappa(\cdot)$ defined in subsection~\ref{sec:spectral_gap}. 
\begin{theorem}\label{thm:almost_commutation}
  Let $A,B$ be two finite groups equipped with probability measures $\mu,\nu$.
  
Let $U:A \to \cU(\cM)$ and $V:B \to \cU(\cM)$ be two homomorphisms. Then
  \[ \E_{a \in A,b\in B} \| [U(a),V(b)]\|_2^2 \leq \kappa(\mu)\kappa(\nu) \int \| [U(a),V(b)]\|_2^2 d\mu(a) d\nu(b).\]
\end{theorem}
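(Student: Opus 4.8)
The plan is to view the quantity $\E_{a,b}\|[U(a),V(b)]\|_2^2$ as a Dirichlet-type energy and apply the spectral gap (Poincaré) inequality \eqref{eq:Poincare_constant} twice, once for $\mu$ on $A$ and once for $\nu$ on $B$, with a suitable choice of representation and vector. The key observation is that the commutator $[U(a),V(b)] = U(a)V(b) - V(b)U(a) = \big(U(a)V(b)U(a)^{-1} - V(b)\big)U(a)$, so by unitary invariance of $\|\cdot\|_2$ we have $\|[U(a),V(b)]\|_2 = \|U(a)V(b)U(a)^{-1} - V(b)\|_2$. Thus, fixing $b$ and setting $\xi_b := V(b) \in L_2(\cM,\tau)$ and letting $A$ act on $L_2(\cM,\tau)$ by the unitary representation $\pi(a)\colon x \mapsto U(a) x U(a)^{-1}$ (the conjugation/adjoint representation, which is unitary for $\|\cdot\|_2$ since $U(a)$ is a unitary in $(\cM,\tau)$), we get $\|[U(a),V(b)]\|_2^2 = \|\pi(a)\xi_b - \xi_b\|_2^2$.

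First I would apply \eqref{eq:Poincare_constant} to the representation $\pi$ and the vector $\xi_b$: since $P_{\cH^\pi}\xi_b$ is the $\|\cdot\|_2$-nearest element of $\cH^\pi$ to $\xi_b$, and $\|\xi_b - P_{\cH^\pi}\xi_b\|_2^2 = \E_{a\in A}\|\pi(a)\xi_b - P_{\cH^\pi}\xi_b\|_2^2$ when one integrates against the uniform measure $\prob_A$ (using $\kappa(\prob_A)=1$), this gives
\[
  \E_{a\in A}\|[U(a),V(b)]\|_2^2 \;\le\; 2\,\|\xi_b - P_{\cH^\pi}\xi_b\|_2^2 \;\le\; \kappa(\mu)\int_A \|\pi(a)\xi_b - \xi_b\|_2^2\, d\mu(a) \;=\; \kappa(\mu)\int_A \|[U(a),V(b)]\|_2^2\, d\mu(a),
\]
where the first inequality comes from $\|\pi(a)\xi_b-\xi_b\|_2 \le \|\pi(a)\xi_b - P_{\cH^\pi}\xi_b\|_2 + \|\xi_b - P_{\cH^\pi}\xi_b\|_2$ and averaging over $a$ against $\prob_A$; alternatively one applies \eqref{eq:Poincare_constant} directly with $\mu=\prob_A$ on the left and the given $\mu$ on the right, both applied to $\pi$, which is cleaner. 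Then I would integrate this inequality over $b\in B$ against $\nu$:
\[
  \int_B \E_{a\in A}\|[U(a),V(b)]\|_2^2\, d\nu(b) \;\le\; \kappa(\mu)\int_A\int_B \|[U(a),V(b)]\|_2^2\, d\mu(a)\, d\nu(b).
\]

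Now I would repeat the same argument in the $B$-variable, but symmetrically: for fixed $a$, $\|[U(a),V(b)]\|_2 = \|V(b)U(a)V(b)^{-1} - U(a)\|_2$, so with $\eta_a := U(a)$ and $\rho(b)\colon x\mapsto V(b)x V(b)^{-1}$ the conjugation representation of $B$, we get $\|[U(a),V(b)]\|_2 = \|\rho(b)\eta_a - \eta_a\|_2$, and \eqref{eq:Poincare_constant} applied with $\prob_B$ on the left and $\nu$ on the right yields $\E_{b\in B}\|[U(a),V(b)]\|_2^2 \le \kappa(\nu)\int_B \|[U(a),V(b)]\|_2^2\, d\nu(b)$. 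Integrating over $a\in A$ against $\mu$ gives
\[
  \int_A \E_{b\in B}\|[U(a),V(b)]\|_2^2\, d\mu(a) \;\le\; \kappa(\nu)\int_A\int_B \|[U(a),V(b)]\|_2^2\, d\mu(a)\, d\nu(b).
\]
Chaining the two displayed inequalities — first reducing $\E_{a\in A}$ on the left to an integral against $\mu$ at the cost of $\kappa(\mu)$ (for each fixed $b$, then averaging $b$ against $\prob_B$), and then reducing $\E_{b\in B}$ to an integral against $\nu$ at the cost of $\kappa(\nu)$ — gives exactly $\E_{a,b}\|[U(a),V(b)]\|_2^2 \le \kappa(\mu)\kappa(\nu)\int\int \|[U(a),V(b)]\|_2^2\, d\mu(a)\, d\nu(b)$, as claimed. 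The only point requiring a little care is the bookkeeping of which measure sits on which side in each application of \eqref{eq:Poincare_constant} and checking that the two conjugation actions are genuinely unitary representations on $L_2(\cM,\tau)$ (immediate since $\tau$ is a trace and $U(a),V(b)$ are unitaries); there is no real obstacle, the content is entirely in the identity $\|[U(a),V(b)]\|_2 = \|\pi(a)\xi_b - \xi_b\|_2$ plus two invocations of the spectral gap.
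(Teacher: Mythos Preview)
Your proposal is correct and follows essentially the same route as the paper: the conjugation action $\pi(a)x=U(a)xU(a)^{-1}$ on $L_2(\cM,\tau)$ is introduced, the identity $\|[U(a),V(b)]\|_2=\|\pi(a)\xi_b-\xi_b\|_2$ is used, the Poincar\'e inequality \eqref{eq:Poincare_constant} is applied once for $A$ and once for $B$, and the two resulting inequalities are chained in the order you describe. The paper merely packages the one-group step (your inequality $\E_{a\in A}\|[U(a),V(b)]\|_2^2\le\kappa(\mu)\int_A\|[U(a),V(b)]\|_2^2\,d\mu(a)$, together with the observation that the left-hand side equals $2\|\xi_b-P_{\cH^\pi}\xi_b\|_2^2$) as a separate lemma before applying it twice.
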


We first prove a result with only one group entering the picture.
\begin{lemma}\label{lemma:spectral_gap_commutator} Let $G$ be a group, $\mu$ a probability measure on $G$ with generating support, and $U\colon G\to \cU(\cM)$ be a group homomorphism. Define $\cN=\cM \cap U(G)'$ and $E_\cN:\cM\to\cN$ the conditional expectation.

  For every $V \in \cM$,
  \begin{equation}\label{eq:spectral_gap_and_commutators} \| V - E_\cN(V)\|_2^2 \leq \frac{\kappa(\mu)}{2} \int \|[U(g),V]\|_2^2 d\mu(g).
  \end{equation}

  If $G$ is finite,
  \[ \E_{g \in G} \|[U(g),V]\|_2^2 \leq \kappa(\mu) \int \|[U(g),V]\|_2^2 d\mu(g).\]
\end{lemma}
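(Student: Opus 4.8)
The plan is to recognize both inequalities as instances of the Poincaré inequality \eqref{eq:Poincare_constant} for a single well-chosen unitary representation of $G$, namely the conjugation representation on $L_2(\cM,\tau)$. First I would define $\pi\colon G\to\cU(L_2(\cM,\tau))$ by letting $\pi(g)$ be the continuous extension of $x\mapsto U(g)\,x\,U(g)^*$ (defined on $\cM$) to $L_2(\cM,\tau)$. Since $\tau$ is tracial, $\|U(g)xU(g)^*\|_2=\|x\|_2$, so $\pi(g)$ is a well-defined unitary, and $g\mapsto\pi(g)$ is a homomorphism because $U$ is. The crucial identification is that the space $\cH^\pi$ of $\pi$-invariant vectors is exactly $L_2(\cN,\tau)$ with $\cN=\cM\cap U(G)'$: a vector of $L_2(\cM,\tau)$ is fixed by every $\pi(g)$ iff it commutes with every $U(g)$, the bounded such elements being precisely $\cN$; moreover, by the very definition of the conditional expectation recalled in Section~\ref{sec:preliminaries}, the orthogonal projection onto $L_2(\cN,\tau)$ restricts to $E_\cN$ on $\cM$.

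Granting this, \eqref{eq:spectral_gap_and_commutators} follows by applying \eqref{eq:Poincare_constant} to the vector $\xi=V\in\cM\subset L_2(\cM,\tau)$. The left-hand side becomes $\|V-E_\cN(V)\|_2^2$; in the right-hand side one computes $\pi(g)V-V=[U(g),V]\,U(g)^*$, so $\|\pi(g)V-V\|_2=\|[U(g),V]\|_2$ by unitary invariance of $\|\cdot\|_2$, and \eqref{eq:Poincare_constant} gives exactly the asserted bound with constant $\kappa(\mu)/2$. (Although $\mu$ need not be symmetric, this is harmless: $\|[U(g^{-1}),V]\|_2=\|[U(g),V]\|_2$ after multiplying by $U(g)$ on both sides, so neither side changes under symmetrizing $\mu$, and \eqref{eq:Poincare_constant} holds with $\kappa(\mu)$ by the discussion of Subsection~\ref{sec:spectral_gap}.)

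For the finite-group statement I would reuse the same representation together with the fact, recalled in Subsection~\ref{sec:spectral_gap}, that $\pi(\prob_G)$ is the orthogonal projection onto $\cH^\pi=L_2(\cN,\tau)$. Expanding the square norm and using $\|\pi(g)V\|_2=\|V\|_2$ gives
\[ \E_{g\in G}\|[U(g),V]\|_2^2=\E_{g\in G}\|\pi(g)V-V\|_2^2=2\|V\|_2^2-2\,\Re\langle\pi(\prob_G)V,V\rangle=2\|V\|_2^2-2\|E_\cN(V)\|_2^2, \]
which equals $2\|V-E_\cN(V)\|_2^2$ by the Pythagorean identity $\|V\|_2^2=\|E_\cN(V)\|_2^2+\|V-E_\cN(V)\|_2^2$. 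Combining this with \eqref{eq:spectral_gap_and_commutators} yields $\E_{g\in G}\|[U(g),V]\|_2^2=2\|V-E_\cN(V)\|_2^2\leq\kappa(\mu)\int\|[U(g),V]\|_2^2\,d\mu(g)$, as wanted.

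The only genuinely non-formal point is the identification of the $\pi$-fixed vectors in $L_2(\cM,\tau)$ with $L_2(\cN,\tau)$ and of the associated orthogonal projection with $E_\cN$ — essentially the statement that the fixed-point algebra of a trace-preserving group action is the range of the associated normal conditional expectation. This is standard von Neumann algebra material, but it is the step requiring a little care, as it uses faithfulness and normality of $\tau$; everything else is a direct application of the Poincaré inequality \eqref{eq:Poincare_constant} and routine bookkeeping with the trace.
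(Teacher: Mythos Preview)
Your proposal is correct and follows essentially the same approach as the paper's own proof: introduce the conjugation representation $\pi(g)X=U(g)XU(g)^*$ on $L_2(\cM,\tau)$, identify its invariant subspace with $L_2(\cN,\tau)$ and the orthogonal projection with $E_\cN$, and then read off both inequalities from the Poincar\'e inequality \eqref{eq:Poincare_constant}. Your write-up is slightly more detailed (the explicit check $\|\pi(g)V-V\|_2=\|[U(g),V]\|_2$ and the remark on symmetrizing $\mu$), but there is no substantive difference.
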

\begin{proof}   The formula
  \[ \pi(a) X = U(a) X U(a)^*\]
  defines a unitary representation of $\pi$ on the Hilbert space $L_2(\cM,\tau)$. Here we use in an essential way that $\tau$ is trace. Moreover, the space of invariant vectors $L_2(\cM,\tau)^\pi$ coincides with $L_2(\cN,\tau)$, and the orthogonal projection on $L_2(\cM,\tau)^\pi$ is just $E_\cN$. Therefore, \eqref{eq:spectral_gap_and_commutators} is exactly the Poincar\'e inequality \eqref{eq:Poincare_constant}.

  If $G$ is finite, $E_\cN$ is given by $V\mapsto \E_{g\in G} U(g) V U(g)^*$, so
  \begin{align*} \E_{g \in G} \|[U(g),V]\|_2^2 &= 2\|V\|_2^2-2\tau(\E_{g \in G} U(g)^* V U(g)V^*)\\& = 2\|V\|_2^2-2\|E_\cN(V)\|_2^2 = 2\|V-E_\cN(V)\|_2^2.
  \end{align*}
  So the second inequality is a particular case of \eqref{eq:spectral_gap_and_commutators}.
\end{proof}

\begin{proof}[Proof of Theorem~\ref{sec:main}] Successive applications of Lemma~\ref{lemma:spectral_gap_commutator} for $A$ and for $B$ give
  \begin{align*}\E_{a,b} \|[U(a),V(b)]\|_2^2 & \leq \E_b \kappa(\mu) \int_A \|[U(a),V(b)]\|_2^2 d\mu(a)\\
    & = \kappa(\mu)   \int_A \E_b \|[U(a),V(b)]\|_2^2 d\mu(a)\\
    & \leq \kappa(\mu)   \int_A \kappa(\nu) \int_B  \|[U(a),V(b)]\|_2^2 d\nu(b) d\mu(a).
  \end{align*}
  This proves the theorem.
\end{proof}
\subsection{Consequences}
We list four consequences of this theorem. Only the last one (Corollary~\ref{thm:almost_anticommutation_2group}) will be used in the next section.
\begin{corollary} Let $A,B,U,V,\mu,\nu$ be as in Theorem~\ref{thm:almost_commutation}. If
  \[ \int \| [U(a),V(b)]\|_2^2 d\mu(a) d\nu(b) \leq \varepsilon,\]
 then there is a projection $P \in \cM_\infty$, unitary representations $\tilde{U}: A \to \cU(P\cM_\infty P)$ and $\tilde{V}: B \to \cU(P\cM_\infty P)$ with commuting ranges, and an isometry $w \in P \cM_\infty 1_\cM$ such that
  \[\E_{a \in A} \|U(a)-w^* \tilde U(a) w\|_2^2\lesssim  \kappa(\mu)\kappa(\nu)\varepsilon,\]
  \[\E_{b \in B} \|U(b)-w^* \tilde V(b) w\|_2^2\lesssim  \kappa(\mu)\kappa(\nu)\varepsilon,\]
  \[ \tau_\infty(P)\leq 1+  16\kappa(\mu)\kappa(\nu) \varepsilon.\]
\end{corollary}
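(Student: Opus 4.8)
The plan is to simply chain the two ingredients already in hand: Theorem~\ref{thm:almost_commutation} converts the ``local'' smallness of the commutators (measured against $\mu\otimes\nu$) into ``global'' smallness (measured against the uniform measure on $A\times B$), at the cost of the factor $\kappa(\mu)\kappa(\nu)$, and then Corollary~\ref{cor:averageGowersHatami_products} upgrades the global smallness into the desired approximate decomposition. There is no real obstacle here; the only thing to say upfront is that if the support of $\mu$ (or of $\nu$) fails to generate $A$ (resp.\ $B$), then $\kappa(\mu)=\infty$ (resp.\ $\kappa(\nu)=\infty$) and all three conclusions hold trivially, so we may assume both spectral gap constants are finite.

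First I would apply Theorem~\ref{thm:almost_commutation} to the homomorphisms $U$ and $V$ and the measures $\mu,\nu$. Combined with the hypothesis $\int \|[U(a),V(b)]\|_2^2\, d\mu(a)\, d\nu(b)\le \varepsilon$, this yields
\[
  \E_{a\in A,\, b\in B} \|[U(a),V(b)]\|_2^2 \;\le\; \kappa(\mu)\kappa(\nu)\,\varepsilon \;=:\; \varepsilon'.
\]

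Next I would feed this into Corollary~\ref{cor:averageGowersHatami_products}, applied with $\varepsilon'$ in place of $\varepsilon$ (this is legitimate since $A,B$ are finite and $U,V$ are genuine homomorphisms). It produces a projection $P\in\cM_\infty$, unitary representations $\tilde U:A\to\cU(P\cM_\infty P)$ and $\tilde V:B\to\cU(P\cM_\infty P)$ with commuting ranges, and an isometry $w\in P\cM_\infty 1_\cM$ such that
\[
  \E_{a\in A}\|U(a)-w^*\tilde U(a)w\|_2^2 < 1444\,\varepsilon' = 1444\,\kappa(\mu)\kappa(\nu)\varepsilon,
\]
\[
  \E_{b\in B}\|V(b)-w^*\tilde V(b)w\|_2^2 < 1444\,\varepsilon' = 1444\,\kappa(\mu)\kappa(\nu)\varepsilon,
\]
\[
  \tau_\infty(P) \le 1 + 16\,\varepsilon' = 1 + 16\,\kappa(\mu)\kappa(\nu)\varepsilon.
\]
The first two displays are exactly the two $\lesssim$ bounds in the statement (with the implicit constant $1444$ absorbed into $\lesssim$), and the third is the claimed bound on $\tau_\infty(P)$ verbatim. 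This completes the proof.

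\textbf{Remark on what is delicate.} Essentially nothing is delicate in the deduction itself; all the work is in the two cited results. The only point worth double-checking is bookkeeping: one must verify that the hypotheses of Corollary~\ref{cor:averageGowersHatami_products} are met after the substitution $\varepsilon\rightsquigarrow\varepsilon'$ — in particular that no normalization of $\|\cdot\|_2$ (the $L_2$-norm on $\cM_\infty$ with respect to $\tau_\infty$) is lost when passing through Theorem~\ref{thm:almost_commutation}, whose statement uses the same $\|\cdot\|_2$. Since both results are stated for the identical norm, this is immediate.
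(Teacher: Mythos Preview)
Your proof is correct and is exactly the paper's argument: the paper's proof reads in full ``This follows from Theorem~\ref{thm:almost_commutation} and Corollary~\ref{cor:averageGowersHatami_products}.'' Your handling of the degenerate case $\kappa(\mu)=\infty$ or $\kappa(\nu)=\infty$ is a nice addition.
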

\begin{proof}
This follows from Theorem~\ref{thm:almost_commutation} and
Corollary~\ref{cor:averageGowersHatami_products}.\end{proof} For
abelian groups, we have a stronger conclusion.
\begin{corollary}\label{thm:almost_commutationbis}
  Let $A,B,U,V,\mu,\nu$ be as in Theorem~\ref{thm:almost_commutation}, with $A$ and $B$ abelian. If
  \[ \int \|[U(a),V(b)]\|_2^2  d\mu(a) d\nu(b) \leq \varepsilon,\]
  then there exists a unitary representation $V_1:B \to \cU(\cM)$ such that
  \[ [U(a),V_1(b)] = 0 \ \ \forall a \in A,b\in B\]
  and
  \[ \E_b \|V(b) - V_1(b)\|^2 \leq 10\kappa(\mu) \kappa(\nu) \varepsilon.\]
\end{corollary}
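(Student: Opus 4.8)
The strategy is to exploit the abelian structure of $B$ through the Fourier correspondence of Lemma~\ref{lem:Fourier_abelian_group} and to reduce the construction of $V_1$ to a rounding (orthonormalization) problem inside $\cN := \cM \cap U(A)'$. Let $E_\cN \colon \cM \to \cN$ be the conditional expectation. Encode $V$ by its PVM $(Q_\chi)_{\chi \in \hat B}$ in $\cM$, $Q_\chi = \E_b \overline\chi(b) V(b)$, so that $V(b) = \sum_\chi \chi(b) Q_\chi$, and set $R_\chi := E_\cN(Q_\chi) \in \cN$; these are positive with $\sum_\chi R_\chi = E_\cN(1) = 1$. If we can produce a genuine PVM $(Q'_\chi)_{\chi\in\hat B}$ in $\cN$ with $\sum_\chi \|Q_\chi - Q'_\chi\|_2^2$ small, then $V_1(b) := \sum_\chi \chi(b) Q'_\chi$ is automatically a unitary representation of $B$ whose range lies in $\cN$, hence commutes with $U(A)$, and by orthogonality of characters $\E_b \| V(b) - V_1(b)\|_2^2 = \sum_\chi \|Q_\chi - Q'_\chi\|_2^2$. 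So it suffices to control $\sum_\chi \|Q_\chi - R_\chi\|_2^2$ and then round.

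For the control, apply the Poincar\'e inequality \eqref{eq:spectral_gap_and_commutators} of Lemma~\ref{lemma:spectral_gap_commutator} to the homomorphism $U$ and the element $V(b)\in\cM$, for each fixed $b$, and average over $b$: by Fubini,
\[ \E_b \| V(b) - E_\cN(V(b))\|_2^2 \le \frac{\kappa(\mu)}{2} \int_A \E_b \|[U(a), V(b)]\|_2^2 \, d\mu(a).\]
For each fixed $a$, the finite-group part of Lemma~\ref{lemma:spectral_gap_commutator} applied to the homomorphism $V$ and the element $U(a)$ gives $\E_b \|[U(a),V(b)]\|_2^2 \le \kappa(\nu) \int_B \|[U(a),V(b)]\|_2^2 \, d\nu(b)$; inserting this and using the hypothesis yields $\E_b \|V(b) - E_\cN(V(b))\|_2^2 \le \tfrac12 \kappa(\mu)\kappa(\nu)\varepsilon$. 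Since $V(b) - E_\cN(V(b)) = \sum_\chi \chi(b)(Q_\chi - R_\chi)$ and $\E_b \|\sum_\chi \chi(b) D_\chi\|_2^2 = \sum_\chi \|D_\chi\|_2^2$ for any family $(D_\chi)$, we obtain $\sum_\chi \|Q_\chi - R_\chi\|_2^2 \le \tfrac12 \kappa(\mu)\kappa(\nu)\varepsilon$. Moreover $R_\chi$ is the orthogonal projection of the projection $Q_\chi$ onto $L_2(\cN,\tau)$, so $\|Q_\chi - R_\chi\|_2^2 = \tau(Q_\chi) - \tau(R_\chi^2) = \tau(R_\chi - R_\chi^2) = \|R_\chi - R_\chi^2\|_1$; hence $\sum_\chi \|R_\chi - R_\chi^2\|_1 \le \tfrac12\kappa(\mu)\kappa(\nu)\varepsilon$, i.e. the positive family $(R_\chi)$ summing to $1$ is $L_1$-close to a PVM.

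It remains to round $(R_\chi)$ to an exact PVM in $\cN$, which is the orthonormalization result of \cite{orthonormalisation}: it produces a PVM $(Q'_\chi)$ in $\cN$ with $\sum_\chi \|Q'_\chi - R_\chi\|_2^2$ bounded by a universal constant times $\sum_\chi \|R_\chi - R_\chi^2\|_1$. Combined with the triangle inequality and the estimate above, this gives $\sum_\chi \|Q_\chi - Q'_\chi\|_2^2 \le 10\,\kappa(\mu)\kappa(\nu)\varepsilon$ (the value $10$ being comfortably reached once the orthonormalization constant is tracked), and then $V_1(b) := \sum_\chi \chi(b) Q'_\chi$ has all the required properties. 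The main obstacle is exactly this last step: performing the orthonormalization inside the possibly non-atomic von Neumann algebra $\cN$ with a sharp enough $L_2$-bound; everything else is the two-fold spectral gap estimate of Lemma~\ref{lemma:spectral_gap_commutator} plus Fourier bookkeeping. (If one only wanted $V_1$ defined on a corner of $\cM_\infty$ together with an auxiliary isometry, one could instead feed $b \mapsto E_\cN(V(b))$, which is $L_2$-close to a homomorphism into $\cN$, into Theorem~\ref{thm:averageGowersHatami}; the point of the present statement is that for abelian $B$ one stays inside $\cM$, which is where the PVM picture and the orthonormalization are essential.)
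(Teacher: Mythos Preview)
Your proof is correct and follows essentially the same route as the paper's: both reduce the statement to the orthonormalisation result from \cite{orthonormalisation}, after using the spectral gap to bound the distance from the PVM of $V$ to the commutant $\cN=\cM\cap U(A)'$. The paper packages the reduction as ``apply Theorem~\ref{thm:almost_commutation} and then \cite[Corollary~5]{orthonormalisation}'', whereas you unpack the first half of that black box by going directly through the Poincar\'e inequality of Lemma~\ref{lemma:spectral_gap_commutator} and the identity $\|Q_\chi-R_\chi\|_2^2=\|R_\chi-R_\chi^2\|_1$; this even saves a factor $2$ before rounding, but the essential external input (the PVM rounding inside $\cN$) is the same.
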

\begin{proof} By Theorem~\ref{thm:almost_commutation}, the hypothesis implies that
  \begin{equation}\label{eq:U_V_almost_commute} \E_{a,b} \| [U(a),V(b)]\|_2^2 \leq \kappa(\mu) \kappa(\nu) \varepsilon.
  \end{equation}
Therefore, the result follows from \cite[Corollary 5]{orthonormalisation}. Observe that \cite[Corollary 5]{orthonormalisation} is only stated for $A=\Z/n\Z$ and $B=\Z/m\Z$, but its proof is valid for any finite abelian groups. Indeed, by Lemma~\ref{lem:Fourier_abelian_group}, \eqref{eq:U_V_almost_commute} can be translated to the PVMs corresponding to $U$ and $V$ being close, so \cite[Theorem 4]{orthonormalisation} applies.
\end{proof}
\begin{corollary}\label{thm:almost_anticommutation}
Let $A$ be a finite abelian group, and $\mu,\nu$ be probability measures on $A$ and $\hat A$ respectively.

Let $U:A \to \cU(\cM)$ and $V:\hat A \to \cU(\cM)$ be two homomorphisms satisfying
  \[ \int \| U(a)V(\chi) -\chi(a) V(\chi) U(a) \|_2^2 d\mu(a) d\nu(\chi) \leq \varepsilon.\]
Then they satisfy
    \[ \E_{a,\chi}  \| U(a)V(\chi) -\chi(a) V(\chi) U(a) \|_2^2 \leq \kappa(\mu)\kappa(\nu)\varepsilon.\]
\end{corollary}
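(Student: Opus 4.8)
The plan is to repeat the proof of Theorem~\ref{thm:almost_commutation}, replacing the conjugation representations used in Lemma~\ref{lemma:spectral_gap_commutator} by twisted ones. Write $f(a,\chi)=\|U(a)V(\chi)-\chi(a)V(\chi)U(a)\|_2^2$, so that the hypothesis reads $\int f\,d\mu(a)\,d\nu(\chi)\le\varepsilon$ and the conclusion reads $\E_{a,\chi}f(a,\chi)\le\kappa(\mu)\kappa(\nu)\varepsilon$. The key point is that, since $\tau$ is a trace, conjugation by a unitary of $\cM$ is an isometry of $L_2(\cM,\tau)$, and a modulus-one scalar multiple of a unitary representation is again a unitary representation; hence for every $\chi\in\hat A$ and every $a\in A$ the formulas
\[ \pi_\chi(a)(X)=\overline{\chi(a)}\,U(a)\,X\,U(a)^*,\qquad \rho_a(\chi)(X)=\chi(a)\,V(\chi)\,X\,V(\chi)^* \]
define unitary representations $\pi_\chi\colon A\to\cU(L_2(\cM,\tau))$ and $\rho_a\colon\hat A\to\cU(L_2(\cM,\tau))$. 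Multiplying by the appropriate unitary on the right one checks that $f(a,\chi)=\|\pi_\chi(a)(V(\chi))-V(\chi)\|_2^2=\|\rho_a(\chi)(U(a))-U(a)\|_2^2$. We may assume $\mu$ and $\nu$ have generating support, since otherwise $\kappa(\mu)$ or $\kappa(\nu)$ is infinite and there is nothing to prove.

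Next I would record the one-variable fact underlying Lemma~\ref{lemma:spectral_gap_commutator}, valid for an arbitrary (not necessarily conjugation) unitary representation: if $\pi$ is a unitary representation of a finite group $G$ on a Hilbert space $\cH$, $\mu$ a probability measure on $G$ and $\xi\in\cH$, then $\E_{g\in G}\pi(g)$ is the orthogonal projection $P_{\cH^\pi}$ onto $\cH^\pi$, so $\E_{g\in G}\|\pi(g)\xi-\xi\|^2=2\|\xi-P_{\cH^\pi}\xi\|^2$, and combining this with the Poincar\'e inequality \eqref{eq:Poincare_constant} gives $\E_{g\in G}\|\pi(g)\xi-\xi\|^2\le\kappa(\mu)\int\|\pi(g)\xi-\xi\|^2\,d\mu(g)$. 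Applying this to $\pi_\chi$ (with $G=A$, measure $\mu$, vector $V(\chi)$) yields $\E_{a\in A}f(a,\chi)\le\kappa(\mu)\int_A f(a,\chi)\,d\mu(a)$ for every fixed $\chi$, and applying it to $\rho_a$ (with $G=\hat A$, measure $\nu$, vector $U(a)$) yields $\E_{\chi\in\hat A}f(a,\chi)\le\kappa(\nu)\int_{\hat A}f(a,\chi)\,d\nu(\chi)$ for every fixed $a$.

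Finally I would chain these exactly as in the proof of Theorem~\ref{thm:almost_commutation}: averaging the second estimate over $a\in A$, interchanging the finite average over $a$ with the integral over $\hat A$, and inserting the first estimate gives
\[ \E_{a,\chi}f(a,\chi)\le\kappa(\nu)\int_{\hat A}\E_{a\in A}f(a,\chi)\,d\nu(\chi)\le\kappa(\mu)\kappa(\nu)\int f(a,\chi)\,d\mu(a)\,d\nu(\chi)\le\kappa(\mu)\kappa(\nu)\varepsilon. \]
There is no genuine obstacle here: the argument is a twisted reprise of Theorem~\ref{thm:almost_commutation}. The only things to be careful about are that $\pi_\chi$ and $\rho_a$ are honest unitary representations (so that the defining property of $\kappa(\mu)$ and $\kappa(\nu)$, which quantifies over \emph{all} unitary representations, may be invoked) and the bookkeeping of which unitary turns the twisted commutator $f(a,\chi)$ into an expression of the form $\|\pi(g)\xi-\xi\|_2^2$.
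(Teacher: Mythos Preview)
Your argument is correct. The verification that $\pi_\chi$ and $\rho_a$ are honest unitary representations, the rewriting of $f(a,\chi)$ as $\|\pi_\chi(a)V(\chi)-V(\chi)\|_2^2$ and as $\|\rho_a(\chi)U(a)-U(a)\|_2^2$, and the chaining via the Poincar\'e inequality \eqref{eq:Poincare_constant} all go through as you describe.

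The paper takes a somewhat different route: instead of introducing the twisted conjugation actions $\pi_\chi$ and $\rho_a$ and redoing the Poincar\'e-inequality step, it tensors with the Pauli matrices $\lambda(a)$ and $M(\chi)$ on $\ell_2(A)$, setting $\tilde U(a)=U(a)\otimes\lambda(a)$ and $\tilde V(\chi)=V(\chi)\otimes M(\chi)$ in $\cM\otimes M_A(\C)$. The relation $\lambda(a)M(\chi)=\overline{\chi(a)}M(\chi)\lambda(a)$ converts the \emph{twisted} commutator of $U,V$ into the \emph{ordinary} commutator of $\tilde U,\tilde V$, with equal $\|\cdot\|_2$-norm, and Theorem~\ref{thm:almost_commutation} applies verbatim. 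Your approach avoids enlarging the ambient algebra and instead exploits directly that a character-twist of a conjugation action is still a unitary representation (so the defining inequality for $\kappa$ still applies); the paper's approach packages exactly the same observation as a reduction to the untwisted theorem via the Weyl--Heisenberg trick. Both are equally short; the paper's version buys a clean black-box reduction, while yours makes more transparent that nothing beyond the Poincar\'e inequality for an arbitrary unitary representation is being used.
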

\begin{proof} 
  Consider, on $\ell_2(A)$, the operators (called Pauli matrices) $\lambda(a)$ of translation operators, and $M(\chi)$ of multiplication by $\chi$:
  \[(\lambda(a) f)(a') = f(a^{-1}a'), (M(\chi) f)(a') = \chi(a') f(a').\]
  They satisfy $\lambda(a) M(\chi) = \overline{\chi}(a) M(\chi) \lambda(a)$.
  Therefore, on $\cM \otimes M_A(\C)$ with the trace $\tau \otimes \tr$, if we define
  \[ \tilde{U}(a) = U(a) \otimes \lambda(a), \tilde{V}(\chi) = V(\chi) \otimes \cM(\chi),\]
  we have
  \[ \tilde U(a)\tilde V(\chi) -\tilde  V(\chi) \tilde U(a) = (U(a)V(\chi) -\chi(a) V(\chi) U(a))\otimes \lambda(a) M(\chi).\]
  So we conclude by Theorem~\ref{thm:almost_commutation}.
\end{proof}
\begin{corollary}\label{thm:almost_anticommutation_2group} Assume, in addition to the hypotheses of Corollary~\ref{thm:almost_anticommutation}, that $A$ is a $2$-group. Then there is a projection $P \in \cM_\infty$, a partial isometry $w \in P \cM_\infty 1_\cM$, a pair of representations $U_1:A \to \cU(P\cM_\infty P)$ and $V_1:\hat A \to \cU(P\cM_\infty P)$ that satisfy
  \[ U_1(a) V_1(\chi) = \chi(a) V_1(\chi) U_1(a)\forall a \in A,\chi \in \hat A,\]
  \[ \E_a \| U(a) - w^* U_1(a)w\|_2^2 \lesssim \kappa(\mu)\kappa(\nu) \varepsilon,\]
  \[ \E_b \| V(b) - w^* V_1(b)w\|_2^2 \lesssim \kappa(\mu)\kappa(\nu) \varepsilon,\]
  \[ \|1-w^*w\|_2^2 \lesssim \kappa(\mu)\kappa(\nu) \varepsilon,\]
  \[ \|P-ww^*\|_2^2 \lesssim \kappa(\mu)\kappa(\nu) \varepsilon.\]
\end{corollary}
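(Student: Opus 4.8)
The plan is to deduce this from Corollary~\ref{thm:almost_anticommutation} together with Corollary~\ref{cor:averageGowersHatami_central_ext_products}, just as the first consequence above deduces its conclusion by chaining Theorem~\ref{thm:almost_commutation} with Corollary~\ref{cor:averageGowersHatami_products}. First I would apply Corollary~\ref{thm:almost_anticommutation} to upgrade the localized hypothesis $\int \|U(a)V(\chi)-\chi(a)V(\chi)U(a)\|_2^2\,d\mu(a)\,d\nu(\chi)\le\varepsilon$ to the averaged estimate
\[ \E_{a\in A,\chi\in\hat A}\|U(a)V(\chi)-\chi(a)V(\chi)U(a)\|_2^2 \le \kappa(\mu)\kappa(\nu)\varepsilon. \]
This is exactly the hypothesis of Corollary~\ref{cor:averageGowersHatami_central_ext_products}, applied with $B=\hat A$ and with the bicharacter $\gamma(a,\chi)=\chi(a)\in\{-1,1\}$ — here is where the assumption that $A$ is a $2$-group is used, since then every character of $A$ takes values in $\{-1,1\}$, so $\gamma$ indeed lands in $\{-1,1\}$ and, being a genuine pairing $A\times\hat A\to\{-1,1\}$, satisfies the cocycle-type identity $\gamma(a_1a_2,\chi_1\chi_2)=\prod_{i,j}\gamma(a_i,\chi_j)$ required there.

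Corollary~\ref{cor:averageGowersHatami_central_ext_products} then directly produces a projection $P\in\cM_\infty$, a partial isometry $w\in P\cM_\infty 1_\cM$, and unitary representations $U_1:A\to\cU(P\cM_\infty P)$ and $V_1:\hat A\to\cU(P\cM_\infty P)$ with
\[ U_1(a)V_1(\chi)=\chi(a)V_1(\chi)U_1(a)\quad\forall a\in A,\chi\in\hat A, \]
together with $\E_a\|U(a)-w^*U_1(a)w\|_2^2 < 30000\,\kappa(\mu)\kappa(\nu)\varepsilon$, the analogous bound for $V$, and $\tau_\infty(P)\le 1+16\kappa(\mu)\kappa(\nu)\varepsilon$. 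This gives all the listed estimates with the implicit constants in the $\lesssim$'s absorbing the numerical factors $30000$ and $16$. The only items not literally output by that corollary are $\|1-w^*w\|_2^2\lesssim\kappa(\mu)\kappa(\nu)\varepsilon$ and $\|P-w^*w\|_2^2\lesssim\kappa(\mu)\kappa(\nu)\varepsilon$; for these I would note that $w$ arises (via the proof of Corollary~\ref{cor:averageGowersHatami_central_ext_products}, which itself invokes Theorem~\ref{thm:averageGowersHatami}) from the estimates $\|1_\cM-X^*X\|\le 4\sqrt\varepsilon'$ and $\|P-XX^*\|\le 4\sqrt\varepsilon'$ of Lemma~\ref{lemma:averageGowersHatami_non_isometry} with $\varepsilon'=\kappa(\mu)\kappa(\nu)\varepsilon$, and that $w^*w$ is a subprojection of $1_\cM$, so $\|1-w^*w\|_2^2=\tau(1)-\tau(w^*w)$ and $\|P-w^*w\|_2^2\le 2(\|P-ww^*\|_2^2+\ldots)$ are controlled by the same quantity. (Alternatively, since $w^*w\le 1_\cM$ and $ww^*\le P$ are projections, $\|1-w^*w\|_2^2 = \tau(1)-\tau(ww^*) \le \tau(P)-\tau(ww^*)+16\kappa(\mu)\kappa(\nu)\varepsilon = \|P-ww^*\|_2^2+16\kappa(\mu)\kappa(\nu)\varepsilon$.)

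The main obstacle is purely bookkeeping: making sure that $\gamma(a,\chi)=\chi(a)$ satisfies the hypotheses of Corollary~\ref{cor:averageGowersHatami_central_ext_products} verbatim, which forces the $2$-group hypothesis, and then tracking that the $w^*w$-versus-$P$ discrepancies claimed in the statement are genuinely controlled — these come for free from the internal estimates of Lemma~\ref{lemma:averageGowersHatami_non_isometry}, but are not among the three bullet points in the conclusion of Corollary~\ref{cor:averageGowersHatami_central_ext_products} as written, so a one-line remark recalling that $w^*w\le 1_\cM$ is a projection and $\tau_\infty(P)$ is close to $1$ is needed. Everything else is a mechanical composition of results already proved.
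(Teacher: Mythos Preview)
Your approach is exactly the paper's: the proof there is the single line ``Combine Corollary~\ref{thm:almost_anticommutation} with Corollary~\ref{cor:averageGowersHatami_central_ext_products},'' and you have correctly identified how the $2$-group hypothesis enters (so that $\gamma(a,\chi)=\chi(a)$ is $\{-1,1\}$-valued). One small caution: your parenthetical ``alternative'' argument for $\|1-w^*w\|_2^2$ uses the inequality $\tau(1)\le\tau(P)+16\kappa(\mu)\kappa(\nu)\varepsilon$, but Corollary~\ref{cor:averageGowersHatami_central_ext_products} only gives the reverse direction $\tau(P)\le 1+16\kappa(\mu)\kappa(\nu)\varepsilon$; your primary route---tracing through the proof of that corollary, where the original $w$ is an isometry and the final partial isometry differs from it by $O(\sqrt{\varepsilon'})$ in $\|\cdot\|_2$---is the one that actually works.
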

\begin{proof} Combine Corollary~\ref{thm:almost_anticommutation} with Corollary~\ref{cor:averageGowersHatami_central_ext_products}.
\end{proof}

\section{Translation to games and strategies}\label{section:games}
In this section, we translate the results of the previous section (in particular Corollary~\ref{thm:almost_anticommutation_2group}) in terms of (two-player one-round) games. This translation is adapted from a construction by Natarajan and Vidick \cite{MR3678246}, which was also adapted in similar fashion in \cite{MIPRE} (see \S~\ref{subsection:comparison} for a comparison).

Since we only consider synchronous strategies, we adopt the following slightly unconventional definition of a game, but that is equivalent to the standard notion of a synchronous game for our purposes.
\begin{definition}\label{def:game} A game is a tuple $G=(X,\mu,A,D)$ where $X$ is a finite set, $\mu$ is a probability distribution on $X\times X$, $A=(A(x))_{x \in X}$ is a collection of finite sets (the possible answers to question $x$) and $D$ is a $\{0,1\}$-valued function defined on $\{(x,y,a,b)\mid (x,y) \in \mathrm{supp}(\mu), a \in A(x), b \in A(y)\}$ and that is required to be symmetric (that is $D(x,y,a,b)=D(y,x,b,a)$ whenever both terms are defined).  
\end{definition}
By abuse, we will also write by the same symbol $\mu$ the probability measure on $X$ given by
\[\mu(x) = \frac{1}{2} \sum_{y \in X} \mu(x,y)+\mu(y,x).\]

If $G=(X,\mu,A,D)$ is a game, a synchronous strategy in $(\cM,\tau)$ is, for every $x \in X$, a PVM $(P^x_a)_{a \in A(x)}$ in $\cM$. The value of the game on this strategy is $\val_G(P):=\int \sum_{a \in A(x), b \in A(y)} D(x,y,a,b) \tau(P^x_a P^y_b) d\mu(x,y)$.

\begin{remark} In the standard definition of a synchronous game, $\mu$ and $D$ are both required to be symmetric, and it is moreover required that $D(x,x,a,b)=0$ whenever $a \neq b$. This last requirement is not important for us because the value of a synchronous strategy does not involve such values of $D$ (because $\tau(P^x_a P^x_b)=0$ if $a\neq b$). Moreover, a game as defined in Definition~\ref{def:game} can be turned a symmetric game as in the standard definition by replacing $\mu$ by $\frac{1}{2}(\mu + \tilde{\mu})$, where $\tilde{\mu}(x,y)=\mu(y,x)$ and extending $D$ by symmetry. But this is not very important, as the value of any synchronous strategy for the original game and the symmetrized game agree.
\end{remark}
\begin{definition} We say that a synchronous strategy $(Q^x_a)_{a \in A(x)}^{x \in X}$ in $(\cN,\tau')$ is $\varepsilon$-close to another synchronous strategy $(P^x_a)_{a \in A(x)}^{x \in X}$ in $(\cM,\tau)$ if~:
  \begin{itemize}
    \item There is a projection $P \in \cM_\infty$ of finite trace such that $\cN = P \cM_\infty P$ with trace $\tau' = \frac{1}{\tau_\infty(P)} \tau$.
    \item There is a partial isometry $w \in P \cM_\infty 1_\cM$ such that
      \[ \tau(1-w^*w) \leq \varepsilon, \tau'(P-w w^*)\leq \varepsilon,\]
      \item $\E_x \sum_{a \in A(x)} \|P^x_a - w^* Q^x_a w\|_2^2\leq \varepsilon$.
  \end{itemize}
\end{definition}
The following straightforward lemma illustrates that this notion of strategies being close is compatible with the closeness of unitary representations that was considered in the previous sections.
\begin{lemma}\label{lem:close_strategies_unitaries} Let $(\cM,\tau)$ be a tracial von Neumann algebra, $P \in \cM_\infty$ be a projection and $w \in P\cM_\infty 1_\cM$. Let $H$ be a finite abelian group, $U:H\to \cU(\cM)$ and $V:H \to\cU(P\cM_\infty P)$ be two unitary representations, with corresponding PVMs $(P_\chi)_{\chi \in  \hat H}$ and $(Q_\chi)_{\chi \in  \hat H}$ as in Lemma~\ref{lem:Fourier_abelian_group}. Then
  \[ \E_{h\in H} \|U(h) - w^* V(h) w\|_2^2 = \sum_{\chi \in \hat H} \|P_\chi - w^* Q_\chi w\|_2^2.\]
\end{lemma}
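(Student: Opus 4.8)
The plan is to compute both sides by expanding in terms of the PVMs and using the defining relations $P_\chi = \E_a \overline\chi(a)U(a)$ and $U(a) = \sum_\chi \chi(a)P_\chi$ from Lemma~\ref{lem:Fourier_abelian_group}, together with the fact that $\{\chi \mapsto \text{(something built from a PVM)}\}$ is an orthonormal-type system under the trace. Concretely, first I would write $U(h) - w^*V(h)w = \sum_{\chi}\chi(h)\bigl(P_\chi - w^* Q_\chi w\bigr)$, using linearity of $w^*(\cdot)w$ and the second formula of Lemma~\ref{lem:Fourier_abelian_group} applied to both $U$ and $V$. Set $R_\chi := P_\chi - w^*Q_\chi w \in \cM_\infty$ (an element, not a projection, but that is irrelevant). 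Then $\|U(h)-w^*V(h)w\|_2^2 = \tau_\infty\bigl(\sum_{\chi,\chi'} \chi(h)\overline{\chi'(h)}\, R_{\chi'}^* R_\chi\bigr)$.

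**The key step** is then to average over $h \in H$ and use the orthogonality relation $\E_{h\in H}\chi(h)\overline{\chi'(h)} = \mathbbm{1}_{\chi = \chi'}$ for characters of the finite abelian group $H$ (here I use $\hat{\hat H}\cong H$ so that characters of $H$ are exactly the elements $\chi \in \hat H$ evaluated pointwise, and the standard orthogonality of characters). This collapses the double sum to the diagonal:
\[ \E_{h\in H}\|U(h)-w^*V(h)w\|_2^2 = \sum_{\chi\in\hat H}\tau_\infty(R_\chi^* R_\chi) = \sum_{\chi\in\hat H}\|P_\chi - w^*Q_\chi w\|_2^2, \]
which is exactly the claimed identity. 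One should double-check that all the $2$-norms involved are finite so that the manipulations with $\tau_\infty$ are legitimate: $P_\chi \in \cM$ has finite $\tau$-norm since $\tau(1_\cM)=1$, and $w^*Q_\chi w \in 1_\cM \cM_\infty 1_\cM$ likewise has finite norm because $w \in P\cM_\infty 1_\cM$ with $P$ of finite trace (or, directly, $\|w^*Q_\chi w\|_2 \leq \|w\|_\infty^2\|Q_\chi\|_2$ and $Q_\chi \in P\cM_\infty P$ with $P$ of finite trace in the intended application — but even without that, $w^*Q_\chi w$ lies in a corner of trace $\leq 1$). So the swap of $\E_h$ and $\tau_\infty$ and the term-by-term summation are all justified.

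**The main obstacle** — really the only thing requiring any care — is bookkeeping the identification of characters: one must be sure that "$\E_{h\in H}$ over the group $H$" pairs with "$\chi$ ranging over $\hat H$" via the canonical isomorphism $H \cong \hat{\hat H}$, so that orthogonality applies in the form $\E_{h}\chi(h)\overline{\chi'(h)} = \delta_{\chi,\chi'}$. Once that is pinned down, the proof is a two-line expansion; the identity is genuinely just Parseval/Plancherel for the $\hat H$-indexed family $(R_\chi)$, applied inside the tracial Hilbert space $L_2(\cM_\infty,\tau_\infty)$. There is no analytic subtlety and no inequality to optimize, so I do not anticipate any real difficulty beyond writing the character-orthogonality step cleanly.
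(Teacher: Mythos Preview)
Your proposal is correct and follows exactly the paper's approach: rewrite $U(h)-w^*V(h)w=\sum_\chi \chi(h)(P_\chi-w^*Q_\chi w)$ via Lemma~\ref{lem:Fourier_abelian_group} and then invoke orthogonality of characters to collapse the average to the diagonal. The paper's proof is in fact a one-line version of what you wrote.
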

\begin{proof}
  The left-hand side is
  \[ \E_{h\in H} \|\sum_\chi \chi(h) (P_\chi -  w^* Q_\chi w)\|_2^2,\]
  so the lemma is just the orthogonality of characters.\end{proof}

We start with two important and well-known examples. I include proofs for completeness, but there is nothing original here.
\subsection{The commutation game}
Given two sets $A_1,A_2$, the \emph{commutation game} on $A_1,A_2$ is the game where
\begin{itemize}
\item $X=\{x_1,x_2,y\}$, $A(x_i) = A_i$ and $A(y) = A_1 \times A_2$,
\item $\mu = \frac 1 2 ( \delta_{(x_1,y)}+  \delta_{(x_2,y)})$,
\item $D(x_1,y,a,(a',b')) = 1_{a=a'}$ and $D(x_2,y,b,(a',b')) = 1_{b=b'}$.
\end{itemize}
The important feature of this game is that strategies with large value have to be almost commuting, as shown by the following lemma. For later use, we denote $G_{com}= (X_{com},A_{com},\mu_{com},D_{com})$ the commutation game on $A_1=A_2=\{-1,1\}$, and $x_{com,1}=x_1$ and $x_{com,2}=x_2$.
\begin{lemma}\label{lem:commutation_game} If a synchronous strategy achieves the value $1-\varepsilon$ on the commutation game of $A_1,A_2$, then its restriction $(p_a)_{a \in A_1}$ and $(q_b)_{b \in A_2}$ to $x_1$ and $x_2$ respectively satisfies
  \begin{equation}\label{eq:commutation_projection}\sum_{a\in A_1,b \in A_2} \|[p_a,q_b]^2\|_2^2 \leq 16 \varepsilon.
  \end{equation}
  If $A_1=A_2=\{-1,1\}$ then
  \begin{equation}\label{eq:commutation_unitary} \|[p_1-p_{-1},q_1-q_{-1}]\|_2^2 \leq 64 \varepsilon.
  \end{equation}
\end{lemma}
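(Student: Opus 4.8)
The plan is to analyze a synchronous strategy for the commutation game through the three PVMs it assigns, and to relate the value to a ``consistency'' quantity. Write $p_a=P^{x_1}_a$ for $a\in A_1$, $q_b=P^{x_2}_b$ for $b\in A_2$, and $r_{a,b}=P^{y}_{(a,b)}$ for $(a,b)\in A_1\times A_2$; these are PVMs in $(\cM,\tau)$. Introduce the marginal families $\tilde p_a:=\sum_{b\in A_2}r_{a,b}$ and $\tilde q_b:=\sum_{a\in A_1}r_{a,b}$. Since the $r_{a,b}$ are mutually orthogonal projections summing to $1$, the $\tilde p_a$ (resp.\ the $\tilde q_b$) are again mutually orthogonal projections with $\sum_a\tilde p_a=\sum_b\tilde q_b=1$, and moreover $\tilde p_a\tilde q_b=r_{a,b}=\tilde q_b\tilde p_a$; in particular the families $(\tilde p_a)_a$ and $(\tilde q_b)_b$ commute. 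Unwinding Definition~\ref{def:game} with $\mu=\tfrac12(\delta_{(x_1,y)}+\delta_{(x_2,y)})$ and the prescribed $D$, one gets $\val=\tfrac12\sum_a\tau(p_a\tilde p_a)+\tfrac12\sum_b\tau(q_b\tilde q_b)$.

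Next I extract the consistency estimate. Each of the two sums $\sum_a\tau(p_a\tilde p_a)$ and $\sum_b\tau(q_b\tilde q_b)$ lies in $[0,1]$: for instance $0\le\tau(p_a\tilde p_a)=\tau(p_a\tilde p_a p_a)\le\tau(p_a)$ since $0\le\tilde p_a\le 1$, and summing over $a$ gives $\le\tau(1)=1$. Hence $\val\ge 1-\varepsilon$ forces each sum to be $\ge 1-2\varepsilon$. Expanding squares, $\sum_a\|p_a-\tilde p_a\|_2^2=2-2\sum_a\tau(p_a\tilde p_a)$ (using $\tau(p_a^2)=\tau(p_a)$, $\tau(\tilde p_a^2)=\tau(\tilde p_a)$, $\sum_a\tau(p_a)=\sum_a\tau(\tilde p_a)=1$), and similarly for $q$. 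Adding the two identities, $E_1+E_2=4-2(\sum_a\tau(p_a\tilde p_a)+\sum_b\tau(q_b\tilde q_b))=4(1-\val)\le 4\varepsilon$, where I write $E_1:=\sum_a\|p_a-\tilde p_a\|_2^2$ and $E_2:=\sum_b\|q_b-\tilde q_b\|_2^2$. Keeping this \emph{exact} identity, rather than only the two individual bounds, is what will produce the sharp constant.

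The last ingredient is a pinching estimate: for self-adjoint $x\in\cM$ and any PVM $(q_b)_b$ in $\cM$, $\sum_b\|[x,q_b]\|_2^2=2\|x-E_{\cN}(x)\|_2^2\le 2\|x\|_2^2$, where $\cN=\cM\cap\{q_b:b\}'$ and $E_\cN$ is the trace-preserving conditional expectation. This is a short trace computation ($\sum_b\|[x,q_b]\|_2^2=2\tau(x^2)-2\tau(x\Phi(x))$ with $\Phi(x)=\sum_b q_b x q_b=E_\cN(x)$ the pinching, which is an orthogonal projection on $L_2(\cM,\tau)$); it is also a special case of Lemma~\ref{lemma:spectral_gap_commutator} applied to the representation Fourier-dual to $(q_b)_b$ with the uniform measure, for which $\kappa=1$. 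Now decompose, using $[\tilde p_a,\tilde q_b]=0$, as $[p_a,q_b]=[p_a-\tilde p_a,\,q_b]+[\tilde p_a,\,q_b-\tilde q_b]$. Applying the pinching estimate to $x=p_a-\tilde p_a$ (with the PVM $(q_b)_b$) and then to $x=q_b-\tilde q_b$ (with the PVM $(\tilde p_a)_a$) yields $\sum_{a,b}\|[p_a-\tilde p_a,q_b]\|_2^2\le 2E_1$ and $\sum_{a,b}\|[\tilde p_a,q_b-\tilde q_b]\|_2^2\le 2E_2$. By the triangle inequality in $\cM$ followed by Minkowski in $\ell_2$ over the pairs $(a,b)$, and then $(\sqrt u+\sqrt v)^2\le 2(u+v)$, one obtains $\sum_{a,b}\|[p_a,q_b]\|_2^2\le(\sqrt{2E_1}+\sqrt{2E_2})^2\le 4(E_1+E_2)\le 16\varepsilon$, which is \eqref{eq:commutation_projection} (if the display is read literally as $\|[p_a,q_b]^2\|_2^2$ it follows a fortiori, since $\|[p_a,q_b]\|_\infty\le\tfrac12$ for projections, so $\|[p_a,q_b]^2\|_2\le\tfrac12\|[p_a,q_b]\|_2$). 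For $A_1=A_2=\{-1,1\}$, writing $p_1-p_{-1}=\sum_a a\,p_a$ and likewise for $q$ gives $[p_1-p_{-1},q_1-q_{-1}]=\sum_{a,b\in\{-1,1\}}ab\,[p_a,q_b]$, and Cauchy--Schwarz over the four terms gives $\|[p_1-p_{-1},q_1-q_{-1}]\|_2^2\le 4\sum_{a,b}\|[p_a,q_b]\|_2^2\le 64\varepsilon$, which is \eqref{eq:commutation_unitary}.

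The only real subtlety is bookkeeping of constants: the naïve bound $\|a+b\|_2^2\le 2\|a\|_2^2+2\|b\|_2^2$ applied to the decomposition of $[p_a,q_b]$ loses a factor $2$ and only yields $32\varepsilon$; the fix is to retain the exact identity $E_1+E_2=4(1-\val)$ and to merge the two error terms by Minkowski plus $(\sqrt u+\sqrt v)^2\le 2(u+v)$. The other place where care is needed is the pinching estimate $\sum_b\|[x,q_b]\|_2^2\le 2\|x\|_2^2$: estimating each commutator separately by $2\|x\|_2$ would introduce a spurious factor $|A_2|$, whereas the PVM structure makes the sum telescope against $\|E_\cN(x)\|_2^2\ge 0$.
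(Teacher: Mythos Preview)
Your proof is correct and follows essentially the same approach as the paper: introduce the marginals $\tilde p_a,\tilde q_b$ of the PVM at $y$, rewrite $1-\val$ as $\tfrac14(E_1+E_2)$, decompose $[p_a,q_b]$ through the commuting pair $(\tilde p_a,\tilde q_b)$, and apply the pinching bound $\sum_b\|[x,q_b]\|_2^2\le 2\|x\|_2^2$ together with Minkowski. The only cosmetic difference is in \eqref{eq:commutation_unitary}, where the paper observes directly that $\|[p_1-p_{-1},q_1-q_{-1}]\|_2=4\|[p_a,q_b]\|_2$ for each $a,b$ (since $p_1-p_{-1}=2p_1-1$), whereas you reach the same factor $4$ via Cauchy--Schwarz over the four terms.
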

\begin{proof}
  The assumption means that there is a PVM $(r_{a,b})_{(a,b)\in A_1 \times A_2}$ such that
  \[ \frac 1 2 \sum_{a,b} \tau( p_a r_{a,b}) + \tau(q_b r_{a,b}) \geq 1-\varepsilon.\]
Define $p'_a= \sum_b r_{a,b}$ and $q'_b = \sum_a r_{a,b}$. The previous inequality can be equivalently written as $\eta_1+\eta_2 \leq 4\varepsilon$, where
  \[ \eta_1 = \sum_a \|p_a - p'_a\|_2^2\]
  and
  \[ \eta_2 = \sum_b \|q_b - q'_b\|_2^2.\]
 We can bound $(\sum_{a,b} \|[p_a,q_b]^2\|_2^2)^{\frac 1 2}$ by
  \[ (\sum_{a,b} \|[p_a-p'_a,q_b]^2\|_2^2)^{\frac 1 2} + (\sum_{a,b} \|[p'_a,q_b-q'_b]^2\|_2^2)^{\frac 1 2} + (\sum_{a,b} \|[p'_a,q'_b]^2\|_2^2)^{\frac 1 2}.\]
Using the easy bound
\[ \sum_b \|[x,q_b]\|_2^2 \leq 2 \|x\|_2^2\]
valid for every $x \in \cM$, we can bound the first term by $\sqrt{2 \eta_1}$ and similarly the second term by $\sqrt{2 \eta_2}$. The last term vanishes because $r$ is a PVM. We deduce
\[ \sum_{a,b} \|[p_a,q_b]^2\|_2^2 \leq (\sqrt{2\eta_1}+\sqrt{2\eta_2})^2 \leq 4(\eta_1+\eta_2).\]
The inequality \eqref{eq:commutation_projection} follows because $\eta_1+\eta_2 \leq 4\varepsilon$.

When $A_1=A_{-1}=\{1,-1\}$, we have $p_1 - p_{-1} = 2p_1-1 =1-2p_{-1}$ and similarly for $q$, so that $\|[p_1-p_{-1},q_1-q_{-1}]\|_2 = 4\|[p_a,p_b]\|_2$ for every $a,b \in \{1,-1\}$. Therefore, \eqref{eq:commutation_unitary} is immediate from \eqref{eq:commutation_projection}.
\end{proof}

\subsection{The anticommutation game}
The anticommutation game (or magic square game) is a game with $|X|=15$ with two specific questions $x_1,x_2$ with answers $A(x_1) = A(x_2)=\{-1,1\}$. For later use, we denote \[G_{\anticom}= (X_{\anticom},A_{\anticom},\mu_{\anticom},D_{\anticom})\] the anticommutation game. The specific question will also be denote $x_{\anticom,j}$.

What will be important will not be the precise definition of the game, but that synchronous strategies with large values forces some anti-commutation~:
\begin{lemma}\cite{MagicSquare}\label{lem:magic-square} If $(p_{-1},p_1)$ and $(q_{-1},q_1)$ are the restriction to $x_1$ and $x_2$ of a synchronous strategy for the anticommutation game with value $1-\varepsilon$, then
\[ \|(p_1-p_{-1})(q_1-q_{-1})+ (q_1-q_{-1})(p_1-p_{-1})\|_2^2 \leq 432 \varepsilon.\]
  \end{lemma}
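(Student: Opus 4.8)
The statement is attributed to \cite{MagicSquare}, and the proof amounts to unpacking what it means for a synchronous strategy to nearly win the magic square game. The plan is to recall the standard presentation of the magic square game as a constraint system (binary linear system) game: the $9$ variables $z_{ij}$ ($1\le i,j\le 3$) are arranged in a $3\times 3$ grid, with $6$ constraints — each row multiplies to $+1$ and each column multiplies to $-1$ — and the two distinguished questions $x_1,x_2$ correspond to a single variable appearing in both a row constraint and a column constraint, whose generators (in the solution group) anticommute. The question set $X$ has $15$ elements: the $6$ constraints and the $9$ variables (this is where $|X|=15$ comes from). A synchronous strategy assigns a PVM to each question; to each variable it effectively assigns an order-two unitary (an observable) obtained from the two-outcome PVM, and the winning condition forces these observables to satisfy, approximately, the $6$ linear relations and the consistency relations between a constraint and the variables in it.

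\textbf{Key steps in order.} First, I would translate the strategy into observables: for each variable question, set the corresponding $\pm1$-observable $Z_{ij} = P^{ij}_1 - P^{ij}_{-1}$ (self-adjoint unitary, order two), and for each constraint question set the $\pm1$-valued outcomes likewise. Second, from $\val = 1-\varepsilon$ and the fact that $\mu$ is a fixed distribution on the $O(1)$ many (constraint, variable) pairs that the referee checks, deduce that \emph{each individual} consistency check is passed with probability $1-O(\varepsilon)$; concretely, for each constraint $c$ and each variable $v\in c$, $\|(\text{value of }v\text{ read from }c) - Z_v\|_2^2 = O(\varepsilon)$, and the product of the readings from $c$ equals the required sign exactly (since the answer to a constraint question is constrained to be a satisfying assignment of that constraint). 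Third, chain these: writing the row constraint through $x_1$ as $z_{11}z_{12}z_{13}=1$ and the column constraint through $x_1$ as $z_{11}z_{21}z_{31}=-1$, and similarly for $x_2$, combine the six relations exactly as in the group-theoretic solution of the magic square to arrive at the approximate identity $Z_{x_1}Z_{x_2} + Z_{x_2}Z_{x_1} \approx 0$ in $\|\cdot\|_2$, tracking how each substitution step costs an additive $O(\sqrt\varepsilon)$ in the $L_2$-norm (using unitary invariance of $\|\cdot\|_2$ and the triangle inequality, exactly as in the manipulations in the proof of Lemma~\ref{lem:GowersHatami_subgroup}). Fourth, collect the constants: there are a bounded number of substitution steps, each contributing $O(\sqrt\varepsilon)$, so squaring gives $\|(p_1-p_{-1})(q_1-q_{-1}) + (q_1-q_{-1})(p_1-p_{-1})\|_2^2 \le C\varepsilon$, and a careful bookkeeping of the number of checks and the weights in $\mu$ yields the explicit constant $432$.

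\textbf{Main obstacle.} The conceptual content is entirely standard (this is the classical rigidity of the magic square game, due to \cite{MagicSquare}, and in the synchronous/tracial setting it is clean because $\tau(P^x_a P^x_b)=0$ for $a\ne b$ removes all self-consistency issues). The only real work is the constant-chasing: one must be careful that in the synchronous setting the ``value read from a constraint question'' is an honest $\pm1$ random variable that exactly satisfies the constraint, so the only errors come from the consistency checks between constraint-questions and variable-questions, and then propagate an error of the form $\|a - a'\|_2 \le \delta$ through products $\|ab - a'b'\|_2 \le \|a-a'\|_2 + \|b - b'\|_2$ for unitaries. Getting exactly $432$ rather than a worse constant requires using the optimal bound at each step (e.g.\ that $(p_1-p_{-1})$ is already a $\pm1$-observable, so no extra factors of $2$ creep in from the PVM-to-observable conversion) and noting that the referee's distribution $\mu_{anticom}$ places weight only on the $O(1)$ relevant pairs, so the per-check failure probability is a small fixed multiple of $\varepsilon$. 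I would present this as: reduce to per-check bounds, then a short explicit chain of six relations, then arithmetic.
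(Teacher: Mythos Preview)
Your proposal is correct and follows essentially the same route as the paper: convert the PVMs to order-two unitaries $U^c$, bound the consistency defects $\eta_{c,\ell}=\|U^c-U^\ell(c)\|_2$ via the value assumption, and chain through all six line constraints to turn $U^{x_1}U^{x_2}$ into $-U^{x_2}U^{x_1}$, then apply Cauchy--Schwarz. The only cosmetic differences are that the paper's variant of the magic square has $\alpha(\ell)=-1$ only on the last vertical line (not all columns), and fixes $x_1=(1,1)$, $x_2=(2,2)$; the explicit chain is $U^{11}U^{22}\to U^{13}U^{32}\to -U^{23}U^{31}\to -U^{22}U^{11}$ using the lines $h1,v2,v3,h3,h2,v1$, after which $\|U^{11}U^{22}+U^{22}U^{11}\|_2\le\sqrt{18\sum_\ell\eta_\ell^2}$ yields the constant $432$.
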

We recall the proof for completeness. To prove the lemma, we need to give the definition of the game. Its set of questions is $X = C \cup L$ where $C=\{1,2,3\}^2$ is a $3\times 3$ square and $L$ is the set of all horizontal or vertical lines in the square. The specific points are $x_1=(1,1)$ and $x_2 = (2,2)$. Define $\alpha(\ell)=1$ for every line except the last vertical line, for which $\alpha(\ell)=-1$. For $c\in C$, define $A(c) = \{-1,1\}$, and for a line $\ell$, define $A(\ell) \subset \prod_{c \in \ell} \{-1,1\}$ by
\[A(\ell) = \{(b_c)_c \in   \prod_{c\in \ell} \{-1,1\} \mid \prod_{c\in \ell} b_c=\alpha(\ell)\}.
\]
The distribution $\mu$ is the uniform distribution on $\{(c,\ell) \mid c \in \ell\}$. And $D(c,\ell,a,b) =1_{a=b_c}$.
\begin{proof}[Proof of Lemma~\ref{lem:magic-square}]
  Let $(p^c_{-1},p^c_1)_{c \in C}$ and $(p^\ell_b)_{b \in A(\ell)}$ be the synchronous strategy with value $\geq 1-\varepsilon$, so that $(p_{-1},p_1) =(p^{1,1}_{-1},p^{1,1}_1)$ and $(q_{-1},q_1) =(p^{2,2}_{-1},p^{2,2}_1)$. For every $c \in C$, define $U^{c} = p^{c}_{-1}-p^c_1$. For every $\ell \in L$ and every  $c\in \ell$, define $U^\ell(c) = \sum_{b \in A(\ell)} b_c p^\ell_b$. Define
  \[ \eta_{c,\ell} = \|U^c - U^\ell(c)\|_2 = 2\sqrt{ 1-\sum_{b \in A(\ell)} \tau(p^c_{b_c} p^\ell_b)}.\]
  So by the assumption that the strategy has value $\geq 1-\varepsilon$, we obtain
  \[ \frac{1}{18} \sum_{\ell} \sum_{c \in \ell} \eta_{c,\ell}^2 =\int \eta_{c,\ell}^2 d\mu(c,\ell) \leq 4\varepsilon.\]
Therefore, if for $\ell \in L$ we denote $\eta_\ell = (\sum_{c \in \ell} \eta_{c,\ell}^2)^{\frac 1 2}$, we obtain
  \[ \sum_\ell \eta_\ell^2 \leq 24 \varepsilon.\]
  The $U^c$ and $U^\ell_c$ are all self-adjoint unitaries. Observe that if $c,c',c''$ are the points in $\ell$, then $U^\ell(c) = \alpha(\ell) U^{\ell}(c') U^{\ell}(c'')$. Therefore, we obtain
  \[ \|U^c - \alpha(\ell) U^{c'} U^{c''}\|_2 \leq \eta_{c,\ell} + \eta_{c',\ell} +\eta_{c'',\ell} \leq \sqrt{3}\eta_\ell.\]
  In the following, we denote by $hi$ the $i$-th horizontal line, and $vj$ the $j$-th vertical line. In the following, we write $M\simeq_\delta N$ if $\|M-N\|_2 \leq \sqrt{3}\delta$. We therefore have
  \begin{align*} U^{11}U^{22} &\simeq_{\eta_{h1}+\eta_{v2}} U^{13}U^{12} U^{12}U^{32} = U^{13}U^{32}\\
    & \simeq_{\eta_{v3}+\eta_{h3}} -U^{23}U^{33}U^{33}U^{31} = -U^{23}U^{31}\\
    & \simeq_{\eta_{h2}+\eta_{v1}} - U^{22}U^{21} U^{21}U^{11} = - U^{22}U^{11}.
  \end{align*}
So we deduce
\[ \|U^{11}U^{22} + U^{22}U^{11}\|_2 \leq \sum_\ell \sqrt{3} \eta_\ell \leq \sqrt{18 \sum_{\ell} \eta_\ell^2} .\]
The lemma follows, because we have already justified that $\sum_\ell \eta_\ell^2 \leq 24 \varepsilon$, and $18 \cdot 24 = 432$.
\end{proof}
\subsection{Pauli matrices}\label{subsection:Pauli}
Let $A$ be a finite group of exponent $2$, that is a group isomorphic to $(\Z/2\Z)^N$ for some integer $N$. In the proof of Corollary~\ref{thm:almost_anticommutation}, we considered two unitary representations $a \in A \mapsto \lambda(a)$ and $\chi \in \hat A \mapsto M(\chi)$ on $B(\ell_2(A))$, called the Pauli representations.  By Fourier transform (Lemma~\ref{lem:Fourier_abelian_group}), these representations correspond to PVMs $(\tau^X_\chi)_{\chi \in \hat A}$ and $(\tau^Z_a)_{a \in A}$, that we will call the Pauli PVMs. If (by fixing a basis) we choose an isomorphism between $A$ and $(\Z/2\Z)^N$ and identify accordingly $\hat A$ with $(\Z/2\Z)^N$ for the duality $\langle a ,b\rangle = (-1)^{\sum_{i=1}^N a_i b_i}$ and $\ell_2(A)$ with $\otimes_{i=1}^N \C^2$, then we have
\[ \tau^X_a = \otimes_{i=1}^N \tau^X_{a_i} \textrm{ where } \tau^X_{0} = \begin{pmatrix} 1/2&1/2\\1/2&1/2
\end{pmatrix}, \tau^X_{1} = \begin{pmatrix} 1/2&-1/2\\-1/2&1/2
\end{pmatrix}\]
and
\[ \tau^Z_a = \otimes_{i=1}^N \tau^Z_{a_i} \textrm{ where } \tau^Z_{0} = \begin{pmatrix} 1&0\\0&0
\end{pmatrix}, \tau^X_{1} = \begin{pmatrix} 0&0\\0&1
\end{pmatrix}.\]

We have the following classical fact.
\begin{lemma}\label{lem:Pauli_matrices} Let $U:A\to \cU(\cM)$ and $V:\hat A\to\cU(\cM)$ be two unitary representations with values in a tracial von Neumann algebra satisfying $U(a) V(\chi) = \chi(a) V(\chi) U(a)$ for every $a\in A,\chi \in\hat A$. Then there is a tracial von Neumann algebra $(\cN,\tau')$ such that $(\cM ,\tau) = (B(\ell_2(A))\otimes \cN,\tr\otimes \tau')$,
  \[ \E_a \overline{\chi(a)}U(a) =\tau^X_\chi \otimes 1_\cN \textrm{ for all }\chi \in\hat A\]
  and
  \[\E_\chi \overline{\chi(a)} V(\chi) = \tau^Z_a\otimes 1_\cN \textrm{ for all }a \in A.\]
  \end{lemma}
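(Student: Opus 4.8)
The plan is to recognize Lemma~\ref{lem:Pauli_matrices} as an abstract rigidity statement for the Weyl-Heisenberg (commutation) relations: the pair $(U,V)$ defines a projective representation of $A \times \hat A$ with the standard cocycle, equivalently an honest representation of the Heisenberg group $G$ (the central extension of $A\times\hat A$ by $\{-1,1\}$ used in Corollary~\ref{cor:averageGowersHatami_central_ext_products}). Since $A$ has exponent $2$ and $|A|=2^N$, this Heisenberg group has a unique irreducible representation on which the central $\{-1,1\}$ acts by $\pm 1$ nontrivially, namely the Pauli (Schrödinger) representation on $\ell_2(A)$; this is the Stone--von Neumann theorem in the finite setting. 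So every representation of $G$ with the right central character is a multiple of the Pauli one, which is exactly the assertion that $\cM \cong B(\ell_2(A))\otimes\cN$ with $U$ and $V$ going to $\lambda(a)\otimes 1$ and $M(\chi)\otimes 1$.

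Concretely I would argue as follows. First I would observe that $\tau^X_\chi := \E_a\overline{\chi(a)}U(a)$ and $\tau^Z_a := \E_\chi\overline{\chi(a)}V(\chi)$ are the PVMs in $\cM$ associated to $U$ and $V$ by Lemma~\ref{lem:Fourier_abelian_group} (applied to $A$ and to $\hat A$, using $\hat{\hat A}\cong A$). From the commutation relation $U(a)V(\chi)=\chi(a)V(\chi)U(a)$ one computes, by a short Fourier manipulation, the covariance identity $U(a)\,\tau^Z_{a'}\,U(a)^* = \tau^Z_{a'}$? — no: rather $U(a)$ conjugates the $\tau^Z$-PVM by a shift, $U(a)\tau^Z_{a'}U(a)^* = \tau^Z_{a'}$ is wrong; the correct statement is $V(\chi)\tau^Z_{a}V(\chi)^* = \tau^Z_a$ and $U(b)\tau^Z_a U(b)^* = \tau^Z_{a}$ again fails — so I will instead extract directly that the operators $e_{a,a'} := \tau^Z_a \big(\text{something}\big)$ form a system of matrix units. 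The clean way: fix the isomorphism $\ell_2(A)\cong\bigotimes_{i=1}^N\C^2$ and define, for $a,a'\in A$, the element $e_{a,a'}\in\cM$ obtained by combining one spectral projection of $\tau^Z$ with the appropriate product of $U(b)$'s that implement the translation from $a$ to $a'$; the commutation relation guarantees $e_{a,a'}e_{b,b'}=\delta_{a',b}\,e_{a,b'}$, $e_{a,a'}^*=e_{a',a}$, and $\sum_a e_{a,a}=1$. This gives a unital $*$-embedding $B(\ell_2(A))=M_{2^N}(\C)\hookrightarrow\cM$.

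Next I would let $\cN := \{e_{a,a'}\}' \cap \cM$ be the relative commutant, equipped with the restriction of $\tau$ (suitably normalized). Because $B(\ell_2(A))$ is a full matrix algebra and $\tau$ is a trace, the standard structure theory gives $\cM = B(\ell_2(A))\,\overline\otimes\,\cN$ with $\tau=\tr\otimes\tau'$ for the trace $\tau'=\tau|_{\cN}$ (this is the finite-dimensional/tracial version of the splitting of a von Neumann algebra containing a unital copy of a type~$I_n$ factor; I would cite the relevant fact about tracial von Neumann algebras, or note that since $\cM$ acts on a Hilbert space the matrix units split off a tensor factor). Finally, under this identification I would check that $\tau^X_\chi$ and $\tau^Z_a$ lie in $B(\ell_2(A))\otimes 1_\cN$ with the prescribed matrix form: the $\tau^Z$-PVM is diagonal by construction of the $e_{a,a}$, and $U(a)$ shifts it, so $U(a) = \lambda(a)\otimes 1$; then $V(\chi)$ commutes with $U$ up to the character $\chi(a)$, which pins it down to $M(\chi)\otimes u(\chi)$ for some unitary $u(\chi)\in\cN$, and absorbing $u(\chi)$ (it is automatically trivial once we have used that $V$ is a homomorphism and $\cN$ is the full relative commutant — or one keeps it and checks $\E_\chi\overline{\chi(a)}V(\chi)=\tau^Z_a\otimes 1$ survives regardless since the $u(\chi)$ average out in the right way) yields the two displayed formulas.

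The main obstacle is the passage from ``$\cM$ contains a unital copy of $M_{2^N}(\C)$ generated by matrix units'' to the genuine tensor splitting $\cM=B(\ell_2(A))\,\overline\otimes\,\cN$ with the trace factoring — one must either invoke the appropriate structure theorem for von Neumann algebras (a copy of $B(\cK)$ sitting unitally inside $\cM$ splits off as a tensor factor, with the relative commutant as complement) or build the splitting by hand from the matrix units via $x\mapsto\sum_{a,a'}e_{a,a}\,x\,e_{a',a'}\otimes(\cdots)$, and then verify compatibility with $\tau$. Everything else is routine Fourier bookkeeping using Lemma~\ref{lem:Fourier_abelian_group} and the defining commutation relation, so this note would be brief.
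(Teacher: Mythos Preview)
Your opening paragraph \emph{is} the paper's proof: the paper simply says that $(U,V)$ defines a representation $\pi$ of the Weyl--Heisenberg group with $\pi(Z)=-1_\cM$, invokes the classification of its irreducibles (the unique one nontrivial on the center is the Pauli representation $\pi_0$ on $\ell_2(A)$), and concludes by Peter--Weyl that $\pi=\pi_0\otimes 1$. Your subsequent matrix-unit construction is a more hands-on route to the same conclusion; it is correct and has the advantage of being self-contained (no appeal to the irrep classification), at the cost of more bookkeeping.

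One point to clean up: your final step about $V(\chi)=M(\chi)\otimes u(\chi)$ is confused and unnecessary. Since you built the matrix units $e_{a,a'}$ from the projections $P_a:=\E_\chi\overline{\chi(a)}V(\chi)$ (the PVM of $V$ via Lemma~\ref{lem:Fourier_abelian_group}) together with the $U(b)$'s, you have $P_a=e_{a,a}=\tau^Z_a\otimes 1_\cN$ by construction, and then Fourier inversion gives $V(\chi)=\sum_a\chi(a)P_a=M(\chi)\otimes 1_\cN$ directly. There is no $u(\chi)$ to absorb; and your fallback claim that ``the $u(\chi)$ average out in the right way'' is actually false (if $V(\chi)=M(\chi)\otimes u(\chi)$ for a nontrivial homomorphism $u$, then $\E_\chi\overline{\chi(a)}V(\chi)\neq\tau^Z_a\otimes 1$). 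So drop that discussion and just note the Fourier inversion.
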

\begin{proof} This is a result about the representation theory of the Weyl-Heisenberg group introduced in Corollary~\ref{cor:averageGowersHatami_central_ext_products}, for $B=\widehat A$ and the map $\gamma(a,\chi)=\chi(a)$. Indeed, it is well-known that its irreducible representations are of two kinds~: those that are trivial on the center and one-dimensional (there are $|A|^2$ of them, corresponding to characters of the abelian group $A\times \hat A$), and a unique representation $\pi_0$ that is non trivial on the center, of dimension $|A|$, given by $(\lambda,M)$. Therefore, if $U$ and $V$ are in the lemma, they give rise to a unitary representation $\pi$ of the Weyl-Heisenberg group such that $\pi(Z)=-1_\cM$ (where $Z$ is the non-trivial central element), so by Peter-Weyl $\pi$ is of the form $\pi_0\otimes 1$, and the lemma follows.
\end{proof}

\subsection{Combining the two games}
We now explain how, adapting the construction from \cite{MR3678246}, we can combine the commutation and anticommutation games to obtain the desired game.

Let $H$ be a finite abelian group of exponent $2$ (that is a group isomorphic to $(\Z/2\Z)^N$ for some integer $N$). Let $(\Omega,\prob)$ be a (finite) probability space with two independent random variable $\alpha:\Omega \to H$, $\beta:\Omega \to \hat H$. Define a partition $\Omega= \Omega_+ \cup \Omega_-$ by
\[\Omega_+ = \{\omega \in \Omega \mid \langle \beta(\omega),\alpha(\omega)\rangle = 1\}\]
and
\[\Omega_- = \{\omega \in \Omega \mid \langle \beta(\omega),\alpha(\omega)\rangle = -1.\}\]

This data allows us to define a game $(\cX,\mu,A,D)$ as follows, inspired by the Pauli basis test in \cite{MIPRE}.
\begin{itemize}
\item $\cX = \{PX,PZ\}\cup (X_{com} \times \Omega_+) \cup (X_{\anticom} \times \Omega_-)$
\item $A(PX) = \hat H$, $A(PZ) = H$, for $x \in X_{com}$, $A(x,\omega) = A_{com}(x)$, and for $x \in X_{\anticom}$, $A(x,\omega)=A_{\anticom}(x)$.
\item $\mu$ is the law of $(x,y) \in \cX$ generated as follows~: generate independently $i$ uniformly in $\{1,2,3\}$, $\omega \in \Omega$, $(x_c,y_c)$ according to $\mu_{com}$ and $(x_a,y_a)$ according to $\mu_{\anticom}$. Define
  \[ (x_0,y_0) =\begin{cases} (x_c,y_c) &\textrm{if }\omega \in \Omega_+\\
  (x_a,y_a)&\textrm{otherwise (if }\omega \in \Omega_-).
  \end{cases}\]
  Define $(x,y)$ as
  \begin{equation}\label{eq:def_mu} (x,y) =\begin{cases} (PX,(x_{com,1},\omega)) &\textrm{if $i=1$ and $\omega \in \Omega_+$}\\
  (PX,(x_{\anticom,1},\omega)) &\textrm{if $i=1$ and $\omega \in \Omega_-$}\\
  ((x_0,\omega),(y_0,\omega))&\textrm{if $i=2$}\\
  (PZ,(x_{com,2},\omega)) &\textrm{if $i=3$ and $\omega \in \Omega_+$}\\
  (PZ,(x_{\anticom,2},\omega)) &\textrm{if $i=3$ and $\omega \in \Omega_-$}\\
  \end{cases}.\end{equation}
\item The decision function $D$ is given by: if $x=x_{com,1}$ and $\omega \in \Omega_+$, or if $x=x_{\anticom,1}$ if $\omega \in \Omega_-$,
  \[D(PX,(x,\omega),\chi,\varepsilon) = 1_{\langle \chi,\alpha(\omega)\rangle = \varepsilon}.\]If $x=x_{com,2}$ and $\omega \in \Omega_+$ or $x=x_{\anticom,2}$ and $\omega \in \Omega_-$,
  \[D(PZ,(x,\omega),h,\varepsilon) = 1_{\langle \beta(\omega),h\rangle = \varepsilon},\]
  and in the remaining cases of the support of $\mu$,
  \[D((x,\omega),(y,\omega),a,b) = \begin{cases} D_{com}(x,y,a,b)&\textrm{if }\omega \in \Omega_+\\
  D_{\anticom}(x,y,a,b)&\textrm{if }\omega \in \Omega_-.
\end{cases}.\]
\end{itemize}

\begin{proposition}\label{prop:construction_of_game} Assume that
  \[ \forall 1\neq \chi \in \hat H, \E_\omega \langle \chi, \alpha(\omega)\rangle \leq 1-\frac 1 c\]
  and
  \[ \forall 1\neq h \in H, \E_\omega \langle \beta(\omega),h\rangle \leq 1-\frac 1 {c'}.\]
  
  If the previous game achieves a value $\geq 1-\varepsilon$ on a synchronous strategy, then its restriction $(p^{PX}_\chi)_{\chi \in \hat H}$ and $(p^{PZ}_h)_{h \in H}$ satisfies
  \[ \frac{1}{|H|^2} \sum_{h\in H,\chi \in \hat H}\|U^{PX}(h) U^{PZ}(\chi) - \chi(h) U^{PZ}(\chi) U^{PX}(h)\|_2^2 \leq 1320 cc' \varepsilon,
    \]
  where $U^{PX}$ is the unitary representation of $H$ corresponding to the PVM $p^{PX}$, and $V^{PZ}$ is the unitary representation of $\hat H$ corresponding to the PVM $p^{PZ}$.
\end{proposition}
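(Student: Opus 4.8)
The plan is to read off from the hypothesis that each of the three cases $i=1,2,3$ in the definition of $\mu$ is won with conditional value at least $1-3\varepsilon$, to convert each of these into an averaged operator estimate, and then to feed the result into Corollary~\ref{thm:almost_anticommutation}, which upgrades an approximate anticommutation on the support of a product measure to one on all of $H\times\hat H$ at the cost of the product of the two spectral gap constants.

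Write $U=U^{PX}\colon H\to\cU(\cM)$ and $V=V^{PZ}\colon\hat H\to\cU(\cM)$ for the representations attached to $p^{PX}$ and $p^{PZ}$ by Lemma~\ref{lem:Fourier_abelian_group}, so $U(h)=\sum_\chi\langle\chi,h\rangle\, p^{PX}_\chi$ and $V(\chi)=\sum_h\langle\chi,h\rangle\, p^{PZ}_h$; since $H$ has exponent $2$ these are self-adjoint unitaries. For $\omega\in\Omega$, let $(y^1_\omega,y^2_\omega)$ be $(x_{com,1},x_{com,2})$ if $\omega\in\Omega_+$ and $(x_{anticom,1},x_{anticom,2})$ if $\omega\in\Omega_-$, and set $P_\omega=p^{(y^1_\omega,\omega)}_1-p^{(y^1_\omega,\omega)}_{-1}$ and $Q_\omega=p^{(y^2_\omega,\omega)}_1-p^{(y^2_\omega,\omega)}_{-1}$, which are self-adjoint unitaries. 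Let $\mu_\alpha,\mu_\beta$ be the laws of $\alpha,\beta$; by independence the law of $(\alpha,\beta)$ is $\mu_\alpha\otimes\mu_\beta$. Since the value is $\geq 1-\varepsilon$ and each case $i\in\{1,2,3\}$ occurs with probability $\tfrac13$, the conditional value of each case is $\geq 1-3\varepsilon$. Conditionally on $\omega$, case $i=2$ is a copy of the commutation game (if $\omega\in\Omega_+$) or the anticommutation game (if $\omega\in\Omega_-$) played on the questions tagged by $\omega$, with some conditional value $v_{2,\omega}$, so Lemmas~\ref{lem:commutation_game} and~\ref{lem:magic-square} bound $\gamma_\omega:=\|[P_\omega,Q_\omega]\|_2$ on $\Omega_+$, resp. $\gamma_\omega:=\|P_\omega Q_\omega+Q_\omega P_\omega\|_2$ on $\Omega_-$, by a constant times $\sqrt{1-v_{2,\omega}}$, whence $\E_\omega\gamma_\omega^2=O(\varepsilon)$. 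Case $i=1$ compares $p^{PX}$ with $p^{(y^1_\omega,\omega)}$ through the predicate $1_{\langle\chi,\alpha(\omega)\rangle=\epsilon}$; expanding $\|P_\omega-U(\alpha(\omega))\|_2^2$ and using $\sum_\epsilon p^{(y^1_\omega,\omega)}_\epsilon=1=\sum_\chi p^{PX}_\chi$ and $U(\alpha(\omega))=\sum_\chi\langle\chi,\alpha(\omega)\rangle\, p^{PX}_\chi$ gives $\|P_\omega-U(\alpha(\omega))\|_2^2=4(1-v_{1,\omega})$, so $\E_\omega\|P_\omega-U(\alpha(\omega))\|_2^2=O(\varepsilon)$; symmetrically, case $i=3$ gives $\E_\omega\|Q_\omega-V(\beta(\omega))\|_2^2=O(\varepsilon)$.

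To combine these, observe that $\langle\beta(\omega),\alpha(\omega)\rangle$ equals $1$ on $\Omega_+$ and $-1$ on $\Omega_-$, so telescoping
\[ U(\alpha(\omega))V(\beta(\omega))-\langle\beta(\omega),\alpha(\omega)\rangle V(\beta(\omega))U(\alpha(\omega)) \]
through the term $P_\omega Q_\omega-\langle\beta(\omega),\alpha(\omega)\rangle Q_\omega P_\omega$ (whose $2$-norm is exactly $\gamma_\omega$), and using that unitaries are $2$-norm isometries, gives
\[ \|U(\alpha(\omega))V(\beta(\omega))-\langle\beta(\omega),\alpha(\omega)\rangle V(\beta(\omega))U(\alpha(\omega))\|_2\leq 2\|U(\alpha(\omega))-P_\omega\|_2+2\|V(\beta(\omega))-Q_\omega\|_2+\gamma_\omega. \]
Squaring, averaging over $\omega$, and using that the law of $(\alpha,\beta)$ is $\mu_\alpha\otimes\mu_\beta$ together with $\langle\beta(\omega),\alpha(\omega)\rangle=\beta(\omega)(\alpha(\omega))$, this yields
\[ \int\|U(h)V(\chi)-\chi(h)V(\chi)U(h)\|_2^2\,d\mu_\alpha(h)\,d\mu_\beta(\chi)=O(\varepsilon). \]
Finally, the two hypotheses say precisely that $\widehat{\mu_\alpha}(\chi)\leq 1-\tfrac1c$ for $1\neq\chi\in\hat H$ and $\widehat{\mu_\beta}(h)\leq 1-\tfrac1{c'}$ for $1\neq h\in H$ (all these numbers are real since $H$ has exponent $2$), so \eqref{eq:spectral_gap_abelian} gives $\kappa(\mu_\alpha)\leq c$ and $\kappa(\mu_\beta)\leq c'$; in particular both supports generate. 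Corollary~\ref{thm:almost_anticommutation}, applied with $A=H$, the homomorphisms $U^{PX},V^{PZ}$ and the measures $\mu_\alpha,\mu_\beta$, then gives $\E_{h\in H,\chi\in\hat H}\|U(h)V(\chi)-\chi(h)V(\chi)U(h)\|_2^2\leq cc'\cdot O(\varepsilon)$, which is the assertion because $\E_{h,\chi}=\tfrac1{|H|^2}\sum_{h,\chi}$.

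I expect the only genuine work to be bookkeeping: parsing $(\cX,\mu,A,D)$ to see which sub-game runs in each case $i$ and for which $\omega$, the short but error-prone computation giving $\|P_\omega-U(\alpha(\omega))\|_2^2=4(1-v_{1,\omega})$ from the consistency predicate, and — the genuinely fiddly part — carrying every numerical factor (the dominant one being the $432$ of Lemma~\ref{lem:magic-square}, together with the three factors $\tfrac13$ from the cases $i=1,2,3$) through to land on the stated constant $1320$.
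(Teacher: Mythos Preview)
Your proposal is correct and follows essentially the same route as the paper: split into the three cases $i=1,2,3$, use Lemmas~\ref{lem:commutation_game}/\ref{lem:magic-square} on the $i=2$ subgame, translate the $i=1,3$ consistency checks into $L_2$ bounds $\E_\omega\|U(\alpha(\omega))-P_\omega\|_2^2=4\varepsilon_1$ and $\E_\omega\|V(\beta(\omega))-Q_\omega\|_2^2=4\varepsilon_3$, telescope, and finish with Corollary~\ref{thm:almost_anticommutation}. The only bookkeeping remark is that to hit the constant $1320$ you should use the sum bound $\varepsilon_1+\varepsilon_2+\varepsilon_3\le 3\varepsilon$ (which you implicitly have) rather than the cruder individual bounds $\varepsilon_i\le 3\varepsilon$, and then apply Cauchy--Schwarz in the form $(\sum_j c_j\sqrt{\varepsilon_j})^2\le(\sum_j c_j^2)(\sum_j\varepsilon_j)$; with your telescoping coefficients $(4,\sqrt{432},4)$ this gives $464\cdot 3\varepsilon=1392\varepsilon$, which is of the right order but slightly above $1320$ --- the paper records $(\sqrt{4\varepsilon_1}+\sqrt{432\varepsilon_2}+\sqrt{4\varepsilon_3})^2\le 440(\varepsilon_1+\varepsilon_2+\varepsilon_3)$, i.e.\ coefficients $(2,\sqrt{432},2)$, so the only discrepancy is the factor of $2$ in front of $\|U-P\|_2$ and $\|V-Q\|_2$ in your telescoping versus the paper's.
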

\begin{proof}
  Denote by $1-\varepsilon_i$ the value of this strategy when the questions asked are conditionned to case $i$ in \eqref{eq:def_mu}. By definition, we then have
  \begin{equation}\label{eq:sumepsiloni} \varepsilon_1+\varepsilon_2+\varepsilon_3\leq 3 \varepsilon.
  \end{equation}
  For every $\omega$, and $j=\{1,2\}$ let $(p^{j,\omega}_{-1},p^{j,\omega}_{1})$ we the PVM corresponding to the question $(x_{com,j},\omega)$ if $\omega \in \Omega_+$ and  to $(x_{\anticom,j},\omega)$ otherwise. These are restrictions to $\{x_1,x_2\}$ of a strategy for the commutation or anticommutation game (depending on whether $\omega \in \Omega_+$ or $\Omega_-$) with value $1-\varepsilon_2(\omega)$, where $\E_\omega \varepsilon_2(\omega) = \varepsilon_2$. So, if we define
  \[U^{\omega} = p^{1,\omega}_{1}-p^{1,\omega}_{-1},\ V^{\omega} = p^{2,\omega}_{1}-p^{2,\omega}_{-1},\] we know from the properties of the commutation and anticommutation game, that
  \[ \E_\omega \|U^{\omega} V^{\omega} - \langle \beta(\omega), \alpha(\omega)\rangle V^{\omega} U^{\omega}\|_2^2 \leq 432 \varepsilon_2.\]
  (and in fact, $432$ can be replaced by $64$ on $\Omega_+$, but this is of no use for us).

  Now by definition of $\varepsilon_1$, we have
  \[ \E_\omega \sum_{\chi \in \hat H} \tau(p^{PX}_\chi p^{1,\omega}_{\langle \chi,\alpha(\omega)}) = \varepsilon_1,\]
  or equivalently
  \[ \E_\omega \| U^{PX}(\alpha(\omega)) - U^\omega\|_2^2 = 4\varepsilon_1.\]
  In the same way,
  \[ \E_\omega \| V^{PZ}(\beta(\omega)) - V^\omega\|_2^2 = 4\varepsilon_3.\]

  Putting everything together, we obtain
  \begin{multline*} \E_\omega \|U^{PX}(\alpha(\omega)) V^{PZ}(\beta(\omega)) - \langle \beta(\omega),\alpha(\omega)\rangle V^{PZ}(\beta(\omega)) U^{PX}(\alpha(\omega))\|_2^2\\ \leq (\sqrt{4\varepsilon_1} + \sqrt{432 \varepsilon_2}+\sqrt{4\varepsilon_3})^2)^2.
  \end{multline*}
  This is less than $440(\varepsilon_1+\varepsilon_2+\varepsilon_3) \leq 1320\varepsilon$ by Cauchy-Schwarz and \eqref{eq:sumepsiloni}. If $\mu$ is the law of $\alpha(\omega)$ and $\nu$ is the law of $\beta(\omega)$, by the assumption that $\alpha$ and $\beta$ are independent, we can write this as
\[ \int \|U^{PX}(h) V^{PZ}(\chi) - \langle \chi,h\rangle V^{PZ}(\chi) U^{PX}(h)\|_2^2 d\mu(\chi) d\nu(h) \leq 1320 \varepsilon.\]
We conclude by Corollary~\ref{thm:almost_anticommutation}.
\end{proof}
The next corollary is expressed in terms of the Pauli PVMs introduced in subsection \ref{subsection:Pauli}.
\begin{corollary}\label{cor:pauli_from_good_strategy}
 Under the same assumption on $\alpha,\beta$ as in Proposition~\ref{prop:construction_of_game}, then any synchronous strategy with value $1-\varepsilon$ to $G_N$ is $O(cc'\varepsilon)$-close to a strategy on an algebra of the form $(M_{2^N}(\C)\otimes \cN,\mathrm{\tr}\otimes \tau')$ where $P^{PX}_\chi = \tau^X_\chi\otimes 1_\cN$ and $P^{PZ}_h = \tau^Z_h\otimes 1_\cN$ for all $\chi \in\hat H, h\in H$.
\end{corollary}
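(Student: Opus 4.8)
The plan is to chain Proposition~\ref{prop:construction_of_game}, Corollary~\ref{thm:almost_anticommutation_2group}, Lemma~\ref{lem:Pauli_matrices} and Lemma~\ref{lem:close_strategies_unitaries}, and then to take care of the bookkeeping hidden in the definition of closeness of strategies. First I would apply Proposition~\ref{prop:construction_of_game} to the given synchronous strategy $(P^x_a)$ of value $1-\varepsilon$: writing $U^{PX}\colon H\to\cU(\cM)$ and $V^{PZ}\colon\hat H\to\cU(\cM)$ for the unitary representations attached by Lemma~\ref{lem:Fourier_abelian_group} to the PVMs $(P^{PX}_\chi)_{\chi\in\hat H}$ and $(P^{PZ}_h)_{h\in H}$ of that strategy, the conclusion of the Proposition is exactly
\[ \E_{h\in H,\chi\in\hat H}\big\|U^{PX}(h)V^{PZ}(\chi)-\chi(h)V^{PZ}(\chi)U^{PX}(h)\big\|_2^2\leq 1320\,cc'\varepsilon. \]
Because $H$ is a $2$-group and the left-hand side equals $\int\|\cdot\|_2^2\,d\prob_H\,d\prob_{\hat H}$ with $\kappa(\prob_H)=\kappa(\prob_{\hat H})=1$, this is precisely the hypothesis of Corollary~\ref{thm:almost_anticommutation_2group}. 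Applying it produces a projection $P\in\cM_\infty$ of finite trace, a partial isometry $w\in P\cM_\infty 1_\cM$, and representations $U_1\colon H\to\cU(P\cM_\infty P)$, $V_1\colon\hat H\to\cU(P\cM_\infty P)$ with $U_1(h)V_1(\chi)=\chi(h)V_1(\chi)U_1(h)$, $\E_h\|U^{PX}(h)-w^*U_1(h)w\|_2^2\lesssim cc'\varepsilon$, $\E_\chi\|V^{PZ}(\chi)-w^*V_1(\chi)w\|_2^2\lesssim cc'\varepsilon$, and $\|1_\cM-w^*w\|_2^2\lesssim cc'\varepsilon$, $\|P-ww^*\|_2^2\lesssim cc'\varepsilon$.

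Since $U_1$ and $V_1$ satisfy the exact Weyl commutation relation, Lemma~\ref{lem:Pauli_matrices} identifies $P\cM_\infty P$, equipped with its normalized trace $\tau'$, with $M_{2^N}(\C)\otimes\cN$ in such a way that the PVMs attached to $U_1$ and $V_1$ become the Pauli PVMs $\tau^X_\chi\otimes 1_\cN$ and $\tau^Z_h\otimes 1_\cN$. These are declared to be the $PX$- and $PZ$-components $(Q^{PX}_\chi)$ and $(Q^{PZ}_h)$ of the new strategy. By Lemma~\ref{lem:close_strategies_unitaries}, the two $L_2$-estimates above rewrite as $\sum_\chi\|P^{PX}_\chi-w^*(\tau^X_\chi\otimes 1_\cN)w\|_2^2\lesssim cc'\varepsilon$ and $\sum_h\|P^{PZ}_h-w^*(\tau^Z_h\otimes 1_\cN)w\|_2^2\lesssim cc'\varepsilon$.

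It remains to define the new strategy on the other questions of $\cX$ and to estimate the total distance. Let $e=w^*w$, so $\|1_\cM-e\|_2^2\lesssim cc'\varepsilon$. For each remaining question $(x,\omega)$ the operators $wP^{(x,\omega)}_aw^*$, $a\in A(x,\omega)$, are positive, sum to $ww^*\le P$, and satisfy $\|wP_aw^*-(wP_aw^*)^2\|_2=\|eP_ae-(eP_ae)^2\|_2\lesssim\|1_\cM-e\|_2$, so each is within $O(\sqrt{cc'\varepsilon})$ of a projection in the $\|\cdot\|_2$-norm. As the auxiliary games have a bounded number of answers ($|A(x,\omega)|\in\{2,4\}$), a routine rounding (spectral projections, or \cite{orthonormalisation}) turns this family into a genuine PVM $(Q^{(x,\omega)}_a)_a$ in $P\cM_\infty P$ — putting the defect $P-ww^*$ into one answer — with $\sum_a\|Q^{(x,\omega)}_a-wP^{(x,\omega)}_aw^*\|_2^2\lesssim cc'\varepsilon$; consequently $\sum_a\|P^{(x,\omega)}_a-w^*Q^{(x,\omega)}_aw\|_2^2\lesssim cc'\varepsilon$, using $\|P^{(x,\omega)}_a-w^*Q^{(x,\omega)}_aw\|_2\le\|Q^{(x,\omega)}_a-wP^{(x,\omega)}_aw^*\|_2+\|eP^{(x,\omega)}_ae-P^{(x,\omega)}_a\|_2$. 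Putting the pieces together, the new strategy $(Q^x_a)$ lives on $P\cM_\infty P=M_{2^N}(\C)\otimes\cN$ with the prescribed Pauli PVMs on $PX$ and $PZ$, and satisfies $\E_x\sum_a\|P^x_a-w^*Q^x_aw\|_2^2\lesssim cc'\varepsilon$ together with $\tau(1_\cM-w^*w)\lesssim cc'\varepsilon$ and $\tau'(P-ww^*)\lesssim cc'\varepsilon$; this is exactly the assertion that it is $O(cc'\varepsilon)$-close to the given strategy.

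There is no serious obstacle here: all the real content has already been extracted in Proposition~\ref{prop:construction_of_game} (the analysis of the game), Corollary~\ref{thm:almost_anticommutation_2group} (the spectral-gap and Gowers-Hatami input) and Lemma~\ref{lem:Pauli_matrices} (the representation theory of the Weyl-Heisenberg group). The only mildly technical steps are the passage between closeness of representations and closeness of PVMs via Lemma~\ref{lem:close_strategies_unitaries}, and the rounding of the compressed PVMs on the auxiliary questions; both are routine.
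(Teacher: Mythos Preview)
Your argument is correct and follows exactly the chain the paper uses: Proposition~\ref{prop:construction_of_game} $\to$ Corollary~\ref{thm:almost_anticommutation_2group} $\to$ Lemma~\ref{lem:Pauli_matrices} $\to$ Lemma~\ref{lem:close_strategies_unitaries}. You have in fact been more careful than the paper, which stops after identifying the $PX$ and $PZ$ components and does not spell out how to define the new strategy on the auxiliary questions $(x,\omega)$; your compression-and-rounding step (using that $|A(x,\omega)|\le 4$ and $\|1_\cM-e\|_2^2\lesssim cc'\varepsilon$) is a legitimate way to fill that small gap.
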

\begin{proof}By the conclusion of Proposition~\ref{prop:construction_of_game} and Corollary~\ref{thm:almost_anticommutation_2group}, the representations $U^{PX},U^{PZ}$ are $O(cc'\varepsilon)$-close to a pair of representations $U_1,V_1$ satisfying  \[ U_1(h) V_1(\chi) = \chi(h) V_1(\chi) U_1(h)\forall h \in H,\chi \in \hat H.\]
  By Lemma~\ref{lem:Pauli_matrices} $U_1$ and $V_1$ have the desired forms, and by Lemma~\ref{lem:close_strategies_unitaries} the closeness of the unitary representations is equivalent to the closeness of the strategies.
  \end{proof}
\begin{example}\label{ex:game_from_code} Let $C,C'$ be two linear binary codes of the same dimension $N$, say with parameters $[k,N,d]$ and $[k',N,d']$. By Example~\ref{ex:code_spectral_gap}, any choice of a basis for $C$ gives rise to a probability measure on $(\Z/2\Z)^N$ that is uniform on a subset of cardinality $k$, and its spectral gap constant is $\kappa = k/2d$. Similarly, any choice of a basis for $C'$ produces a probability measure on $(\Z/2\Z)^N$ uniform on a subset of size $k'$ and with $\kappa = k'/2d'$. 

  Let us consider $\Omega = \mathrm{supp}(\mu)\times  \mathrm{supp}(\mu')$ with its uniform probability measure, and $\alpha,\beta:\Omega\to (\Z/2\Z)^N$ the two coordinate projections. If we identify $(\Z/2\Z)^N$ with its Pontryagin dual for the duality $\langle a,b\rangle=(-1)^{\sum_i a_i b_i}$, the previous construction therefore gives rise to a two-player non-local game $G(C)=(X_{C,C'},\mu,A_{C,C'},D_{C,C'})$ with $|X_{C,C'}|=O(kk')$, $A_{C,C'}=(\Z/2\Z)^N$  and two particular questions $PX$ and $PZ$ with the following properties~
  \begin{itemize}
  \item $\mu(PX)=\mu(PZ)=\frac 1 3$,
  \item Any synchronous strategy with value $1-\varepsilon$ to $G(C)$ is $O(\varepsilon\frac{kk'}{dd'})$-close to a strategy on an algebra of the form $(M_{2^N}(\C)\otimes \cN,\mathrm{\tr}\otimes \tau')$ where $P^{PX}_a = \tau^X_a\otimes 1_\cN$ and $P^{PZ}_b = \tau^Z_b\otimes 1_\cN$ for all $a,b\in (\Z/2\Z)^N$.
  \end{itemize}
\end{example}
The existence of asymptotically good codes (Proposition~\ref{prop:measure_with_small_support_and_small_Fourier_transform_nonabelian}) implies in particular the following Theorem.
\begin{theorem}\label{thm:mainTheorem}
  For every $N$, there is a game $G_N$ with $|X|\leq C N^2$, $|A|=2^N$ and with two specific questions $PX,PZ \in X$ satisfying $\mu(PX)=\mu(PZ) = \frac 1 3$, with answer sets $A(PX)=(\Z/2\Z)^N = A(PZ)=(\Z/2\Z)^N$ such that any synchronous strategy with value $1-\varepsilon$ to $G_N$ is $O(\varepsilon)$-close to a strategy on an algebra of the form $(M_{2^N}(\C)\otimes \cN,\mathrm{\tr}\otimes \tau')$ where $P^{PX}_a = \tau^X_a\otimes 1_\cN$ and $P^{PZ}_b = \tau^Z_b\otimes 1_\cN$ for all $a,b\in (\Z/2\Z)^N$.
\end{theorem}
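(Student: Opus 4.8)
The plan is to specialize the general construction of Example~\ref{ex:game_from_code} to a family of asymptotically good linear codes and then check that the resulting parameters are as claimed. All the real work has already been done: Example~\ref{ex:game_from_code} packages both the game construction of \S~\ref{section:games} and its soundness analysis (via Corollary~\ref{cor:pauli_from_good_strategy}, and ultimately Corollary~\ref{thm:almost_anticommutation_2group}), so what remains is purely a matter of choosing a good input code and bookkeeping the constants.

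First I would apply Proposition~\ref{prop:measure_with_small_support_and_small_Fourier_transform_nonabelian} to the group $(\Z/2\Z)^N$: this produces a subset $F \subseteq (\Z/2\Z)^N$ of cardinality $k \leq C\log|(\Z/2\Z)^N| = (C\log 2)\,N$ whose uniform probability measure $\mu$ has spectral gap constant $\kappa(\mu) \leq 2$. Translating through Example~\ref{ex:code_spectral_gap} with $q=2$, the identity $\kappa(\mu) = k/(2d)$ shows that the associated binary linear code has parameters $[k,N,d]$ with $d \geq k/4$; in particular $k = O(N)$ and $k/d \leq 4$, both with absolute constants independent of $N$. (This is just the statement that asymptotically good binary linear codes exist; note that simply taking a basis of $(\Z/2\Z)^N$ would give $k=N$ but $d=1$, which is why one genuinely needs Proposition~\ref{prop:measure_with_small_support_and_small_Fourier_transform_nonabelian}.)

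Next I would take $C=C'$ to be this code and feed the pair $(C,C')$ into Example~\ref{ex:game_from_code}, producing the game $G_N := G(C)$ with answer set $(\Z/2\Z)^N$, so $|A| = 2^N$, and $|X| = O(kk') = O(N^2)$, which after adjusting the constant is $\leq C''N^2$. The two distinguished questions satisfy $\mu(PX)=\mu(PZ)=\tfrac 13$ by the first bullet of Example~\ref{ex:game_from_code}, and the answer sets at those questions are $A(PX)=\hat H=(\Z/2\Z)^N$ and $A(PZ)=H=(\Z/2\Z)^N$ under the fixed self-duality of $(\Z/2\Z)^N$. For the soundness part, the second bullet of Example~\ref{ex:game_from_code} says that any synchronous strategy with value $1-\varepsilon$ is $O\!\big(\varepsilon\,\tfrac{kk'}{dd'}\big)$-close to a strategy on an algebra $(M_{2^N}(\C)\otimes\cN,\ \tr\otimes\tau')$ with $P^{PX}_a = \tau^X_a\otimes 1_\cN$ and $P^{PZ}_b = \tau^Z_b\otimes 1_\cN$; since $k/d \leq 4$ and $k'/d' \leq 4$ we get $\tfrac{kk'}{dd'}\leq 16$, so this closeness is $O(\varepsilon)$ with an implied constant independent of $N$.

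There is no genuinely hard step here; the only point requiring care is precisely that the implied constant in ``$O(\varepsilon)$-close'' must not depend on $N$, which is exactly why the construction is fed a code of \emph{constant} rate and \emph{constant} relative distance rather than an arbitrary one. Beyond that, the proof is simply the composition Proposition~\ref{prop:measure_with_small_support_and_small_Fourier_transform_nonabelian} $\Rightarrow$ good code $\Rightarrow$ Example~\ref{ex:game_from_code}.
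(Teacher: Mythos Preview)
Your proposal is correct and follows essentially the same route as the paper: the paper's proof is the one-line ``Combine Proposition~\ref{prop:measure_with_small_support_and_small_Fourier_transform_nonabelian} and Corollary~\ref{cor:pauli_from_good_strategy}'', and you have simply unpacked this via Example~\ref{ex:game_from_code} (which is itself an application of Corollary~\ref{cor:pauli_from_good_strategy}) with the added bookkeeping that $\kappa(\mu)\leq 2$ forces $k/d\leq 4$ and hence the closeness constant is $N$-independent. Your emphasis on why a constant-rate, constant-relative-distance code is needed is exactly the point.
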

Moreover, the $G_N$ can be made \emph{explicit}, replacing the existencial argument of asymptotically good codes from Proposition~\ref{prop:measure_with_small_support_and_small_Fourier_transform_nonabelian} by explicit constructions, for examples such as Justesen codes \cite{MR0465509} or expander codes \cite{MR2494807}, see also \cite{MR1996953}.
\begin{proof} Combine Proposition~\ref{prop:measure_with_small_support_and_small_Fourier_transform_nonabelian} and Corollary~\ref{cor:pauli_from_good_strategy}.
\end{proof}
\subsection{Final comment}\label{subsection:comparison}
Let $q=2^k$ be a power of $2$ with $k$ odd. This guarantees that the Pontryagin dual of $\F_q$ identifies with $\F_q$ for the duality bracket $\langle x,y\rangle = (-1)^{Tr(xy)}$ where we identify an element of $\F_q$ (here $xy$) with the $\F_2$-linear map of multiplication on $\F_q$, seen as an $\F_2$ vector space, and so $Tr(xy) \in \F_2$ is the trace of this $\F_2$-linear operator. Identify the Pontryagin dual of $\F_q^N$ with itself accordingly.

Let $m$ be an integer. Consider the Reed-Muller code $C$, the set of all polynomials of individual degree $\leq 1$ in $m$ variables, seen as a subspace of the space $\F_q^{\F_q^m}$ of all functions $\F_q^m\to \F_q$. By the Schwarz-Zippel Lemma, any nonzero such polynomial has at most $mq^{m-1}$ zeros, so it is a $[q^m,2^m,\leq q^m(1-m/q)]_q$-code. By Example~\ref{ex:code_spectral_gap}, this code therefore gives rise to a probability measure $\mu$ on $\F_q^{2^m}$ that is uniformly supported on a set of size $q^{m+1}$, and such that $\kappa(\mu) \leq \frac{q-1}{q-m}$. In particular, as soon as $q \geq 2m$, we obtain $\kappa(\mu) \leq 2$.

If we define a game from two copies of $C$ as in Example~\ref{ex:game_from_code}, we therefore obtain a game with $|X| = O(2^{2k(m+1)})$, $|A| = 2^{k2^m}$ and that satisfies the same conclusion as in Theorem~\ref{thm:mainTheorem}. In this example, the dependance between the number of questions $O(2^{2k(m+1)})$ and of answers $O(2^{k 2^m})$ is not as good as in Theorem~\ref{thm:mainTheorem}.

In \cite[Section 7.3]{MIPRE} a game called the Pauli basis test is
studied, depending on the same parameters $k$ and $m$ and an
additional parameter $d$. We will not recall the precise description
of the Pauli basis test here, but it is closely related to the game
previously defined for this value of $H,\alpha,\beta$. In particular,
it is not difficult to show that, for every $d\geq 1$, the Pauli basis
test contains the game above, and therefore a conclusion similar to
Theorem~\ref{thm:mainTheorem} holds for the Pauli basis test as soon
as $q \geq 2m$. This is significantly better than in \cite[Theorem
  7.14]{MIPRE}, where this statement is proved for the Pauli basis
test, but with $O(\varepsilon)$ replaced by $a(md)^a(\varepsilon^b +
q^{-b}+2^{-bmd})$ for some constants $a,b,c$.

Therefore, Theorem~\ref{thm:mainTheorem} and the more general construction in Example~\ref{ex:game_from_code} can be seen as both an improvement, generalization and simplification of   \cite[Theorem 7.14]{MIPRE}.

However, it should be noted that this simplification does not remove
all the dependances to the difficult result from
\cite{MR4399717,MR4518868}, as this result is still used in the answer
reduction (or PCP) part of \cite{MIPRE}. The main difference between
the results in this note and \cite{MR4518868} is that in
\cite{MR4518868}, the number of answers is much smaller.

\section{Stability}\label{sec:stability}

\begin{definition} Let $G$ be a countable group and $\mu$ a probability measure with generating support. A map $\varphi:G\to \cU(\cM)$ is said to be an $(\varepsilon,\mu)$-almost homomorphism if $\iint \| \varphi(gh) - \varphi(g) \varphi(h)\|_2^2d\mu(g) d\mu(h) \leq \varepsilon$.
\end{definition}
Let $\vn$ be a class of von Neumann algebras equipped with normal tracial states. 

\begin{definition}\label{def:stability}
  We say that $(G,\mu)$ is $\vn$-stable if there is a non-decreasing
  function $\delta:[0,4]\to [0,4]$ with $\lim_{t \to 0} \delta(t)= 0$ and such that, for every $(\cM,\tau)$ in $\vn$ and every $(\varepsilon,\mu)$-almost homomorphism $\varphi: G \to \cU(\cM)$, there is a group homomorphism $\pi:G \to
  \cU(\cM)$ satisfying
  \[ \int \|\varphi(g) - \pi(g)\|_2^2 d\mu(g) \leq \delta(\varepsilon).\]

Such a function $\delta$ satisfying moreover that $\delta$ is concave and $\delta(4)= 4$ will be called a modulus of $\vn$-stability.
\end{definition}
\begin{remark} If $(G,\mu)$ is $\vn$-stable, then it admits a modulus of $\vn$-stability (by replacing $\delta$ by the smallest function greater than $\delta$, concave and taking the value $4$ at $4$). The concavity requirement for $\delta$ is here to make statements such as Theorem~\ref{thm:stability_direct_product} cleaner. It is also very natural, and in fact if $\vn$ is stable by direct sums then the best $\delta$ is necessarily concave. Indeed, if $\varphi_1$ and $\varphi_2$ are respectively $(\varepsilon_1,\mu)$ and $(\varepsilon_2,\mu)$ almost representations with values in $\cU(\cM_1)$ and $\cU(\cM_2)$, and $\lambda \in [0,1]$, then defining $\cM = \cM_1\oplus \cM_2$ with trace $\tau(x_1,x_2) = \lambda \tau_1(x_1) + (1-\lambda)\tau_2(x_2)$, the pair $(\varphi_1,\varphi_2)$ defines a $(\lambda\varepsilon_1+(1-\lambda)\varepsilon_2,\mu)$ almost representation $\varphi$ on $\cM$. Moreover, a unitary representation $\pi:G\to \cU(\cM)$ is the same as a pair of unitary representations $\pi_i:G\to \cU(\cM_i)$, with
  \begin{multline*} \int \|\varphi(g) - \pi(g)\|_2^2 d\mu(g) \\=\lambda \int \|\varphi_1(g) - \pi_1(g)\|_2^2 d\mu(g) + (1-\lambda) \int \|\varphi_2(g) - \pi_2(g)\|_2^2 d\mu(g).
  \end{multline*}
  Taking the supremum over all $(\varepsilon_1,\mu)$ and $(\varepsilon_2,\mu)$ almost representations, we obtain
  \[ \lambda\delta(\varepsilon_1) + (1-\lambda)\delta(\varepsilon_2) \leq \delta(\lambda\varepsilon_1+(1-\lambda)\varepsilon_2).\]
  That is, $\delta$ is concave.
\end{remark}

\begin{definition}\label{def:flexible_stability}
  We say that $(G,\mu)$ is $\vn$ flexibly stable if there is a non-decreasing function $\delta:[0,4]\to [0,4]$ such that $\lim_{t \to 0} \delta(t) = 0$ such that, for every $(\cM,\tau)$ in $\vn$ and every   $(\varepsilon,\mu)$-almost homomorphism $\varphi: G \to \cU(\cM)$, there is a projection $P \in \cM_\infty$, a group homomorphism $\pi:G \to \cU(\cM)$ and an isometry $w \in P \cM_\infty 1_\cM$ satisfying
  \[\max\left(\tau(P)-1, \int \|\varphi(g) - w^*\pi(g)w\|_2^2 d\mu(g)\right) \leq \delta(\varepsilon).\]

  Such a function $\delta$ satisfying moreover that $\delta$ is concave and $\delta(4)=4$ will be called a modulus of $\vn$-flexible stability.
\end{definition}
For example, Theorem~\ref{thm:averageGowersHatami} says that for every finite group and any $\vn$, $(G,\prob_G)$ is $\vn$ flexibly stable with modulus $\delta(t) = \min(4,169 t)$.

This is an adaptation of the standard notions of Hilbert-Schmidt stability and Hilbert-Schmidt flexible stability, see for example \cite{Ioana2}.

Let $\vn_{\mathrm{fin}}$ denote the class of all finite-dimensional von Neumann algebras with tracial states.
\begin{lemma} A countable group $G$ is Hilbert-Schmidt (flexibly) stable if and only if there is a probability measure $\mu$ on $G$ with generating support such that $(G,\mu)$ is $\vn_{\mathrm{fin}}$ (flexibly) stable.

  If $G$ is finitely presented, $\mu$ can moreover be taken to be of finite support.
\end{lemma}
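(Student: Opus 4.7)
The plan is to establish the equivalence by viewing $\vn_{\mathrm{fin}}$-stability with respect to $\mu$ as a quantitative reformulation of the classical sequence-based definition of Hilbert-Schmidt (flexible) stability: a countable group $G$ is HS-(flexibly) stable if and only if, for every sequence of maps $\varphi_n : G \to \cU(\cM_n)$ into tracial finite-dimensional algebras with $\|\varphi_n(gh)-\varphi_n(g)\varphi_n(h)\|_2 \to 0$ for every fixed $g,h \in G$, there exist genuine homomorphisms $\pi_n$ (possibly into $P_n \cM_{n,\infty} P_n$ with an isometry $w_n$ and $\tau_\infty(P_n) \to 1$ in the flexible case) such that $\|\varphi_n(g)-\pi_n(g)\|_2 \to 0$ (resp.\ $\|\varphi_n(g) - w_n^* \pi_n(g) w_n\|_2 \to 0$) for every fixed $g$. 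Both directions of the lemma are reduced to passing between this pointwise-on-$G$ condition and the $\mu$-averaged condition.

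First I would prove the direction ``$\vn_{\mathrm{fin}}$-(flexibly) stable for some $\mu$ $\Rightarrow$ HS-(flexibly) stable''. Given a sequence $(\varphi_n)$ as above, since each term $\|\varphi_n(gh)-\varphi_n(g)\varphi_n(h)\|_2^2$ is bounded by $4$, dominated convergence gives $\iint \|\varphi_n(gh)-\varphi_n(g)\varphi_n(h)\|_2^2 \, d\mu(g)\, d\mu(h) \to 0$. Applying the modulus $\delta$ of $\vn_{\mathrm{fin}}$-stability produces $\pi_n$ (and $w_n$, $P_n$ in the flexible case) with $\int \|\varphi_n(g)- w_n^* \pi_n(g) w_n\|_2^2 \, d\mu(g) \to 0$ and $\tau(P_n) \to 1$. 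This yields convergence on $\mathrm{supp}(\mu)$ pointwise. Since $\mathrm{supp}(\mu)$ generates $G$, one extends to arbitrary $g = s_1 \cdots s_k$ by the telescoping estimate
\[
\|\varphi_n(g) - w_n^* \pi_n(g) w_n\|_2 \leq \|\varphi_n(g) - \textstyle\prod_i \varphi_n(s_i)\|_2 + \sum_i \|\varphi_n(s_i) - w_n^* \pi_n(s_i) w_n\|_2 + (k-1)\|P_n - w_nw_n^*\|_2,
\]
using unitarity of the $\varphi_n(s_i)$ to absorb multiplications, and (in the flexible case) the identity $\|P_n - w_nw_n^*\|_2^2 = \tau_\infty(P_n)-1$.

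Next I would prove the converse ``HS-(flexibly) stable $\Rightarrow$ some $\mu$ works''. Enumerating $G = \{g_k\}_{k \geq 1}$, set $\mu = \sum_k 2^{-k} \delta_{g_k}$, which has full support. Assume for contradiction that no modulus $\delta$ works: then there are $\eta>0$, $(\varepsilon_n,\mu)$-almost homomorphisms $\varphi_n$ with $\varepsilon_n \to 0$, such that every homomorphism $\pi$ (resp.\ isometric dilation) fails the $\eta$-bound. Because $\mu(g_i)\mu(g_j) \geq 4^{-i-j}$, the hypothesis $\iint \|\cdot\|_2^2 \leq \varepsilon_n$ forces $\|\varphi_n(g_i g_j) - \varphi_n(g_i)\varphi_n(g_j)\|_2 \to 0$ for each fixed $i,j$, the pointwise-on-$G$ condition. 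Invoking classical HS-(flexible) stability produces $\pi_n$ (and $w_n$) with $\|\varphi_n(g)- w_n^*\pi_n(g)w_n\|_2 \to 0$ pointwise; dominated convergence again gives $\int \|\varphi_n - w_n^* \pi_n w_n\|_2^2 \, d\mu \to 0$, contradicting the assumed lower bound.

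Finally, for the finitely presented refinement with $G = \langle S \mid R \rangle$, take $\mu$ uniform on $S \cup S^{-1}$. The same contradiction argument works once one knows that near-multiplicativity on $S$ is enough to control near-multiplicativity on any pair of words: iterating the triangle inequality and unitary invariance of $\|\cdot\|_2$ along a word of length $\ell$ accumulates errors linearly, and every relator $r \in R$ has bounded length, so $\|\varphi_n(r) - 1\|_2 \to 0$ along any sequence with $\max_{s,t \in S}\|\varphi_n(st)-\varphi_n(s)\varphi_n(t)\|_2 \to 0$. Hence such a sequence descends from the free group on $S$ to an asymptotic homomorphism of $G$, to which the classical HS-stability hypothesis can be applied, and the previous contradiction proceeds verbatim. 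I expect the main subtlety to be this last step, namely checking that controlling $\mu$-averaged defects on generators really does imply the pointwise defect condition on all of $G$ in the finitely-presented setting; once the reduction to the free group is set up correctly, the rest is routine.
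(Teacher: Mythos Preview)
Your argument handles the passage between the sequential (pointwise) formulation of Hilbert--Schmidt stability and the modulus formulation cleanly, but it skips the one step the paper identifies as the actual content of the lemma: extending stability from matrix algebras to all of $\vn_{\mathrm{fin}}$. In your opening paragraph you characterize HS-stability by sequences $\varphi_n : G \to \cU(\cM_n)$ \emph{into tracial finite-dimensional algebras}. The standard definition, however, only quantifies over $\cM_n = M_{k_n}(\C)$ with the normalized trace. That these are equivalent is not automatic, and this is exactly what bites you in the converse direction: your contradiction hypothesis (failure of $\vn_{\mathrm{fin}}$-stability) produces almost homomorphisms $\varphi_n$ into arbitrary $(\cM_n,\tau_n)\in\vn_{\mathrm{fin}}$, and you then ``invoke classical HS-stability'' to obtain $\pi_n$. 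But classical HS-stability gives you nothing for a general $\cM_n$, and a naive reduction to summands fails because $\cM_n = \bigoplus_i M_{k_{n,i}}$ may have unboundedly many pieces with weights $\lambda_{n,i}$ tending to $0$, so the componentwise defects need not go to $0$ along any diagonal sequence.

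The paper closes this gap by working directly with the modulus rather than sequences: given a $\{M_n\}$-modulus $\delta$ (whose existence is the part the paper calls immediate), decompose any $(\cM,\tau)\in\vn_{\mathrm{fin}}$ as $\bigoplus_i (M_{n_i},\tr)$ with weights $\lambda_i$, apply $\delta$ in each summand, and then use concavity of $\delta$ to conclude $\sum_i \lambda_i\,\delta(\varepsilon_i) \le \delta\bigl(\sum_i \lambda_i \varepsilon_i\bigr) \le \delta(\varepsilon)$. This is short, but it is precisely the missing ingredient in your argument; once you insert it (say, by first running your contradiction argument to produce a $\{M_n\}$-modulus and then performing this decomposition-plus-concavity step), the rest of your proof goes through. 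Your treatment of the forward implication and of the finitely presented refinement is fine.
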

\begin{proof}
  The lemma is immediate if $\vn_{\mathrm{fin}}$ is replaced by the set \[\{M_n\} = \{(M_n(\C),\tr) \mid n\geq 1.\}.\] So the whole point of the lemma is to show that if $(G,\mu)$ is $\{M_n\}$-stable, then $(G,\mu)$ is $\vn_{\mathrm{fin}}$-stable, and similarly for flexible stability.

  Assume that $(G,\mu)$ is $\{M_n\}$-stable, and let $\delta$ be a modulus. We shall prove that $(G,\mu)$ is $\vn_{\mathrm{fin}}$-stable with the same modulus. Let $(\cM,\tau) \in \vn_{\mathrm{fin}}$ and $\varphi:G\to \cU(\cM)$ be an $(\varepsilon,\mu)$-almost homomorphism. We can decompose $\cM$ into a finite direct sum of matrix algebras $\cM = \oplus_{i=1}^k M_{n_i}(\C)$ with trace $\tau = \sum_i \lambda_i \tr_{n_i}$ for positive numbers $\lambda_i$ summing to $1$. Then $\varphi(g) = (\varphi_i(g))_i$ where $\varphi_i$ is an $(\varepsilon_i,\mu)$-almost homomorphism for some numbers $\varepsilon_i$ satisfying
  \[\sum\nolimits_i \lambda_i\varepsilon_i\leq\varepsilon.\]
  Therefore, by $\{M_n\}$-stability, there is homomorphism $\pi_i \colon G\to\cU(n_i)$ such that $\int\|\varphi_i(g) - \pi_i(g)\|_2^2d\mu(g) \leq \delta(\varepsilon_i)$. If we define a homomorphism $\pi:G\to \cU(\cM)$ by $\pi(g) = (\pi_i(g))_{g\in G}$, we obtain
  \[ \int \|\varphi(g) - \pi(g)\|_2^2 d\mu(g) = \sum_{i=1}^k\lambda_i \int\|\varphi_i(g) - \pi_i(g)\|_2^2d\mu(g) \leq \delta(\varepsilon)\]
  by concavity of $\delta$.

The argument is identical for flexible stability, and left to the reader.
\end{proof}

Assume that $\vn$ is a class of von Neumann algebras closed under taking subalgebras.

\begin{theorem}\label{thm:stability_direct_product} Direct products of $\vn$-stable groups are $\vn$-stable, provided that one of them has property (T).

  More precisely, if $(G_1,\mu_1)$ has property (T) and is $\vn$-stable with modulus $\delta_1$ and $(G_2,\mu_2)$ is $\vn$-stable with modulus $\delta_2$, then $(G_1 \times G_2,\mu)$ is $\vn$-stable with modulus $\delta(\varepsilon) \lesssim  \delta_2(\kappa(\mu_1) \delta_2(\varepsilon))$, where $\mu$ is the probability measure
  \[\mu(x,y) = \frac{1}{2}(\mu(x) 1_{y=1}+\mu(y) 1_{x=1}).\]
\end{theorem}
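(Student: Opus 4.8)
\emph{Strategy.} I would look for the homomorphism on $G_1\times G_2$ in the split form $\pi(g_1,g_2)=\pi_1(g_1)\pi_2'(g_2)$, where $\pi_1\colon G_1\to\cU(\cM)$ is supplied by the stability of $G_1$ and $\pi_2'$ takes values in the relative commutant $\cN=\cM\cap\pi_1(G_1)'$. Since $\vn$ is closed under subalgebras, $(\cN,\tau)\in\vn$; and since $\pi_2'(G_2)$ then commutes with $\pi_1(G_1)$, such a $\pi$ is automatically a group homomorphism. Everything reduces to producing $\pi_1,\pi_2'$ and estimating distances in $\|\cdot\|_2$.

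\emph{Restricting and stabilising each factor.} Write $\varphi_i$ for the restriction of $\varphi$ to the $i$-th factor $G_i\hookrightarrow G_1\times G_2$. Because $\mu$ puts mass $\tfrac12\mu_i$ on $G_i$, the hypothesis forces $\varphi_i$ to be a $(4\varepsilon,\mu_i)$-almost homomorphism of $G_i$, while the ``mixed'' pairs in the hypothesis (using that $(g_1,1)$ and $(1,h_2)$ commute in $G_1\times G_2$) give $\int\int\|[\varphi_1(g_1),\varphi_2(h_2)]\|_2^2\,d\mu_1\,d\mu_2\lesssim\varepsilon$. Applying $\vn$-stability of $(G_1,\mu_1)$ and of $(G_2,\mu_2)$ yields homomorphisms $\pi_1\colon G_1\to\cU(\cM)$ and $\pi_2\colon G_2\to\cU(\cM)$ with $\int\|\varphi_i-\pi_i\|_2^2\,d\mu_i\le\delta_i(4\varepsilon)$; substituting $\pi_i$ for $\varphi_i$ (the unitaries absorb the $\|\cdot\|_2$ errors) gives $\eta:=\int\int\|[\pi_1(g_1),\pi_2(h_2)]\|_2^2\,d\mu_1\,d\mu_2\lesssim\varepsilon+\delta_1(4\varepsilon)+\delta_2(4\varepsilon)$.

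\emph{Spectral gap and re-stabilisation.} Now $\pi_1$ is a genuine unitary representation, so Lemma~\ref{lemma:spectral_gap_commutator} applied to $\pi_1$ with $V=\pi_2(k)$ bounds $\|\pi_2(k)-E_\cN(\pi_2(k))\|_2^2$ by $\tfrac{\kappa(\mu_1)}{2}\int\|[\pi_1(g_1),\pi_2(k)]\|_2^2\,d\mu_1(g_1)$ for every $k\in G_2$. For $k=h_2$, integration over $\mu_2$ shows $\pi_2$ lies $\|\cdot\|_2$-close to $\cN$ on $\mu_2$-average, with error $\lesssim\kappa(\mu_1)\eta$; and since $\pi_2$ is an honest homomorphism, $[\pi_1(g_1),\pi_2(h_2h_2')]=[\pi_1(g_1),\pi_2(h_2)]\pi_2(h_2')+\pi_2(h_2)[\pi_1(g_1),\pi_2(h_2')]$, so the same estimate with $k=h_2h_2'$, integrated over $\mu_2\times\mu_2$, is again $\lesssim\kappa(\mu_1)\eta$ --- crucially with \emph{no} convolution $\mu_2\ast\mu_2$ appearing. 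Setting $\psi_2=E_\cN\circ\pi_2\colon G_2\to\cN$ and writing $r=\pi_2-\psi_2$, the identity $\pi_2(h_2h_2')=\pi_2(h_2)\pi_2(h_2')$ gives $\psi_2(h_2h_2')-\psi_2(h_2)\psi_2(h_2')=E_\cN(\psi_2(h_2)r(h_2')+r(h_2)\psi_2(h_2')+r(h_2)r(h_2'))$, of $\|\cdot\|_2$-norm $\lesssim\|r(h_2)\|_2+\|r(h_2')\|_2$; hence $\psi_2$ is an almost homomorphism into the unit ball of $\cN$ of parameter $\lesssim\kappa(\mu_1)\eta$, and $\int\|1_\cM-\psi_2^*\psi_2\|_2^2\,d\mu_2\lesssim\kappa(\mu_1)\eta$ as well. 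A routine polar-decomposition step (take the partial isometry in the polar decomposition of $\psi_2$ inside $\cN$ and extend it to a unitary; the defect projections have $\mu_2$-average trace $\lesssim\kappa(\mu_1)\eta$, and the product estimate above keeps the result an almost homomorphism) upgrades $\psi_2$ to a unitary-valued almost homomorphism of the same quality, so $\vn$-stability of $(G_2,\mu_2)$ inside $\cN$ provides a homomorphism $\pi_2'\colon G_2\to\cU(\cN)$ with $\int\|\psi_2-\pi_2'\|_2^2\,d\mu_2\lesssim\delta_2(C\kappa(\mu_1)\eta)$. Then $\pi(g_1,g_2)=\pi_1(g_1)\pi_2'(g_2)$ is a homomorphism into $\cU(\cM)$; on $\mathrm{supp}(\mu)$ it agrees with $\pi_1$ on the first factor and with $\pi_2'$ on the second, so the chain $\varphi_2\to\pi_2\to\psi_2\to\pi_2'$ yields $\int\|\varphi-\pi\|_2^2\,d\mu\lesssim\delta_1(4\varepsilon)+\delta_2(C\kappa(\mu_1)(\varepsilon+\delta_1(4\varepsilon)+\delta_2(4\varepsilon)))$. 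Passing to a common modulus $\delta$ for $G_1,G_2$ and using that a concave $\delta$ with $\delta(0)=0$ and $\delta(4)=4$ satisfies $t\le\delta(t)$ and $\delta(\lambda t)\le\lambda\delta(t)$ for $\lambda\ge1$, the right-hand side is $\lesssim\delta(\kappa(\mu_1)\delta(\varepsilon))$, the asserted modulus.

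\emph{Main obstacle.} The crux is the third paragraph: once $\pi_2$ has been compressed onto $\cN$ one still needs a bona fide almost homomorphism of $G_2$, and this is exactly why $\varphi_2$ must first be replaced by the honest representation $\pi_2$ --- so that the commutator is additive over products and the spectral gap controls $\pi_2(h_2h_2')-E_\cN(\pi_2(h_2h_2'))$ uniformly, without a $\mu_2\ast\mu_2$ term --- together with the small but genuine point of turning the contraction-valued $\psi_2$ into a unitary-valued almost homomorphism $G_2\to\cU(\cN)$. Everything else is bookkeeping with the concavity of the moduli.
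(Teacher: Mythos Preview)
Your argument is correct and follows the same overall architecture as the paper (stabilise on $G_1$, use the spectral gap to push the $G_2$-data into $\cN=\cM\cap\pi_1(G_1)'$, unitarize, then stabilise on $G_2$ inside $\cN$), but with one genuine technical difference worth noting.

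You first apply $G_2$-stability in $\cM$ to replace $\varphi_2$ by an honest representation $\pi_2$, and then exploit multiplicativity of $\pi_2$ to control $\|\pi_2(h_2h_2')-E_\cN\pi_2(h_2h_2')\|_2$ via the commutator identity $[\pi_1(g_1),\pi_2(h_2h_2')]=[\pi_1(g_1),\pi_2(h_2)]\pi_2(h_2')+\pi_2(h_2)[\pi_1(g_1),\pi_2(h_2')]$. This neatly sidesteps the $\mu_2\ast\mu_2$ issue. The paper instead works directly with $\varphi|_{G_2}$ (never stabilising it in $\cM$) and handles the convolution term by the decomposition $\varphi(gh)=a+b+c$ together with Lemma~\ref{eq:norm_of_conditional_exp}, obtaining $\eta(gh)\le\|\varphi(gh)-\varphi(g)\varphi(h)\|_2+\eta(g)+\eta(h)$. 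Your polar-decomposition step is exactly Lemma~\ref{eq:almost-unitary_close_to_unitary} applied to $\pi_2(h_2)$.

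The trade-off: the paper's route invokes $\delta_2$ only once and actually yields the sharper modulus $\delta_2(\kappa(\mu_1)\delta_1(\varepsilon))$ (the $\delta_2$ inside $\delta_2$ in the stated bound appears to be a typo for $\delta_1$). Your route invokes $\delta_2$ twice, producing $\delta_2\big(C\kappa(\mu_1)(\delta_1(\varepsilon)+\delta_2(\varepsilon))\big)$, which is weaker when $\delta_2\gg\delta_1$ but does match the theorem as stated once you pass to a common modulus. In exchange, your argument is slightly cleaner at the convolution step.
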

The proof will use the following simple lemmas.
\begin{lemma}\label{eq:almost-unitary_close_to_unitary} Let $(\cM,\tau)$ be a tracial von Neumann algebra and $\cN\subset\cM$ be a subalgebra, and let $E_\cN:\cM \to \cN$ be the conditional expectation. For every unitary $V \in \cU(\cM)$, there is a unitary $\tilde{V}\in\cU(\cN)$ such that $\|V-\tilde{V}\|_2 \leq \sqrt{2} \|V-E_\cN(V)\|_2$.
\end{lemma}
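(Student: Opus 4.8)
The plan is to produce $\tilde V$ from the polar decomposition of $E_\cN(V)$. Write $Y = E_\cN(V) \in \cN$; since conditional expectations are contractive for the operator norm, $\|Y\|_\infty \le 1$, so in particular $Y^*Y \le 1$ and $|Y| = (Y^*Y)^{1/2}$ has spectrum in $[0,1]$. Let $Y = \tilde V\,|Y|$ be the polar decomposition, where $\tilde V$ is the partial isometry carrying the range of $|Y|$ onto the range of $Y$; both $\tilde V$ and $|Y|$ lie in $\cN$ because $Y$ does. The first issue is that $\tilde V$ need only be a partial isometry, not a unitary; I would first assume (or reduce to the case) that $Y$ has trivial kernel in $L_2(\cN,\tau)$, in which case $\tilde V$ is unitary, and then handle the general case by extending $\tilde V$ on the complementary projections (both the kernel projection of $Y$ and that of $Y^*$ have the same trace since $\tau$ is a trace, so one can add a partial isometry between them to make $\tilde V$ a genuine unitary in $\cU(\cN)$). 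Alternatively, and perhaps more cleanly, one can replace $Y$ by $Y + \eta(1 - \text{support})$ for small $\eta$ and pass to the limit.

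Next I would estimate $\|V - \tilde V\|_2$. By the triangle inequality,
\[
\|V - \tilde V\|_2 \le \|V - E_\cN(V)\|_2 + \|E_\cN(V) - \tilde V\|_2 = \|V - E_\cN(V)\|_2 + \|\,\tilde V(|Y| - 1)\,\|_2 = \|V - E_\cN(V)\|_2 + \big\| \,|Y| - 1 \big\|_2,
\]
using that $\tilde V$ is an isometry on the relevant subspace (on the kernel part $|Y|-1 = -1$ is handled by the unitary extension, which is built to kill that contribution — one must be a little careful here, and this is the main place where the trace property is used). So it remains to bound $\big\||Y| - 1\big\|_2$ by $\|V - E_\cN(V)\|_2$. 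Since $0 \le |Y| \le 1$, we have the pointwise inequality $(1-t)^2 \le 1 - t^2$ for $t \in [0,1]$, hence $\big\||Y|-1\big\|_2^2 \le \tau(1 - Y^*Y) = 1 - \|Y\|_2^2$. On the other hand $\|V - E_\cN(V)\|_2^2 = \|V\|_2^2 - \|E_\cN(V)\|_2^2 = 1 - \|Y\|_2^2$ by the orthogonality of the projection $E_\cN$ in $L_2(\cM,\tau)$ (here $\|V\|_2 = 1$ as $V$ is a unitary). Therefore $\big\||Y|-1\big\|_2 \le \|V - E_\cN(V)\|_2$, and combining with the display above gives $\|V - \tilde V\|_2 \le 2\|V - E_\cN(V)\|_2$.

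This yields the constant $2$ rather than the claimed $\sqrt 2$, so the last step is to sharpen the argument. The slack is in the triangle inequality $\|V - \tilde V\|_2 \le \|V - E_\cN(V)\|_2 + \|E_\cN(V) - \tilde V\|_2$; instead I would compute $\|V - \tilde V\|_2^2$ directly via $\tau$. Expanding, $\|V - \tilde V\|_2^2 = 2 - 2\,\mathrm{Re}\,\tau(V^*\tilde V)$, and since $\tilde V \in \cN$ we have $\tau(V^*\tilde V) = \tau(E_\cN(V)^*\tilde V) = \tau(|Y|\,\tilde V^*\tilde V) = \tau(|Y|\,p)$ where $p$ is the support projection; modulo the kernel issue this is $\tau(|Y|) \ge \tau(Y^*Y) = \|Y\|_2^2$ because $|Y| \ge |Y|^2$ on $[0,1]$. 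Hence $\|V - \tilde V\|_2^2 \le 2 - 2\|Y\|_2^2 = 2\|V - E_\cN(V)\|_2^2$, which is exactly the claimed bound. The main obstacle, as flagged, is the bookkeeping around the (possibly nontrivial) kernel of $E_\cN(V)$ so that $\tilde V$ is genuinely unitary in $\cN$ and the estimate $\tau(V^*\tilde V) \ge \|Y\|_2^2$ survives; using that $\tau$ is a trace (so kernel projections of $Y$ and $Y^*$ are Murray–von Neumann equivalent in $\cN$) resolves this.
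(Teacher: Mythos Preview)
Your argument is correct and is essentially the paper's proof: take the unitary polar part $\tilde V$ of $E_\cN(V)$ (using that $\cN$ is finite so that the partial isometry extends to a unitary), and then the estimate $\|V-\tilde V\|_2^2 = 2 - 2\tau(|Y|) \le 2(1-\|Y\|_2^2) = 2\|V-E_\cN(V)\|_2^2$ is exactly what the paper obtains, phrased there as the orthogonal decomposition $\|V-\tilde V\|_2^2 = \|V-Y\|_2^2 + \|\tilde V - Y\|_2^2$ together with $\|\tilde V - Y\|_2 \le \|V-Y\|_2$. Your first pass with the triangle inequality (constant $2$) is unnecessary, and the ``kernel issue'' you flag is precisely the standard fact that in a finite von Neumann algebra the polar part can be taken unitary, which the paper simply cites.
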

The $\sqrt{2}$ is optimal, for example when $E_\cN(V)=0$.
\begin{proof} Let $X = E_\cN(V)$. Since $\cN$ is finite, we can write $X=\tilde{V}|X|$ where $\tilde{V} \in\cU(\cN)$ and $|X|=(X^*X)^{\frac 1 2}$. By the duality between $L_1(\cM,\tau)$ and $\cM$, we have $\Re \tau(V^*X) \leq\tau(|X|)$, or equivalently  $\|\tilde{V}-X\|_2 \leq \|V-X\|_2$. If we decompose $V-\tilde{V} = V-X + X-\tilde{V}$, the terms $V-X$ and $X-\tilde{V}$ are orthogonal, and therefore
  \[\|V-\tilde{V}\|_2^2 = \|V-X\|^2 + \|\tilde{V}-X\|^2 \leq 2\|V-X\|_2^2.\]
  This proves the lemma.
\end{proof}
\begin{lemma}\label{eq:norm_of_conditional_exp} Let $\cN\subset\cM$ be a von Neumann sulalgebra, and $E_\cN:\cM\to\cN$ be the trace-preserving conditional expectation. For every $\xi \in L_2(\cM,\tau)$,
  \[ \|\xi - E_\cN(\xi)\|_2 = \sup\{\tau(\xi \eta) \mid \eta \in L_2(\cM,\tau), E_\cN(\eta)=0,\|\eta\|_2 = 1\}.\]
\end{lemma}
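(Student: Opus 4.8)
The plan is to reduce the statement to the elementary Hilbert-space fact that the distance from a vector to a closed subspace is realized by the orthogonal projection, together with the duality between the norm of a projection and the supremum of inner products over the unit sphere of the subspace. The only facts I would invoke are standard properties of the trace-preserving conditional expectation, already discussed in Section~\ref{sec:preliminaries}.

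First I would recall that $E_\cN$ extends to a contraction on $L_2(\cM,\tau)$ which is precisely the orthogonal projection onto the closed subspace $L_2(\cN,\tau)\subset L_2(\cM,\tau)$; consequently, for $\xi\in L_2(\cM,\tau)$ the vector $\xi-E_\cN(\xi)$ is the orthogonal projection of $\xi$ onto $W:=L_2(\cN,\tau)^\perp$, so that $\|\xi-E_\cN(\xi)\|_2=\mathrm{dist}\big(\xi,L_2(\cN,\tau)\big)$. Next I would identify the set indexing the supremum with the unit sphere of $W$: since $E_\cN$ is the orthogonal projection onto $L_2(\cN,\tau)$, the condition $E_\cN(\eta)=0$ is exactly $\eta\in W$, and $W$ is stable under $\eta\mapsto\eta^*$ because $E_\cN(\eta^*)=E_\cN(\eta)^*$ and the adjoint is an anti-unitary of $L_2(\cM,\tau)$ preserving the $2$-norm. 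Using the identity $\tau(\xi\eta)=\tau\big((\eta^*)^*\xi\big)=\langle\xi,\eta^*\rangle$ (well defined since $\xi\eta\in L_1(\cM,\tau)$ by Cauchy--Schwarz), the supremum in the statement equals $\sup\{\langle\xi,\nu\rangle\mid \nu\in W,\ \|\nu\|_2=1\}$.

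Finally I would apply the elementary fact that, for a closed subspace $W$ of a Hilbert space with orthogonal projection $P_W$, one has $\sup\{\Re\langle\xi,\nu\rangle\mid \nu\in W,\ \|\nu\|=1\}=\|P_W\xi\|$, the supremum being attained at $\nu=P_W\xi/\|P_W\xi\|$ when $P_W\xi\neq 0$; multiplying $\nu$ by a unit scalar shows the real part may be dropped (the set of values $\tau(\xi\eta)$ is in fact a disc centered at $0$, which is also how one should read the bare ``$\sup$'' in the statement). Since $P_W\xi=\xi-E_\cN(\xi)$ by the first step, the supremum equals $\|\xi-E_\cN(\xi)\|_2$, which is the claim. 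I do not expect any genuine obstacle here; the only points deserving a word of care are that $E_\cN$ really does extend to the $L_2$-orthogonal projection onto $L_2(\cN,\tau)$ and that the bilinear pairing $\tau(\xi\eta)$ is finite and matches the Hilbert inner product after an adjoint — both entirely standard.
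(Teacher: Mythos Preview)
Your proof is correct and follows essentially the same approach as the paper, which simply records in one line that $1-E_\cN$ is the orthogonal projection onto $\{\eta\in L_2(\cM,\tau)\mid E_\cN(\eta)=0\}$. You have spelled out the details (the $*$-stability of $W$ and the identification $\tau(\xi\eta)=\langle\xi,\eta^*\rangle$) that the paper leaves implicit, but the underlying idea is identical.
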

\begin{proof}
  $(1-E_\cN)$ is the orthogonal projection on $\{\eta \in L_2(\cM,\tau) \mid E_\cN(\eta)=0\}$.
\end{proof}

\begin{proof}[Proof of Theorem~\ref{thm:stability_direct_product}] 
  Let $\varphi : G_1\times G_2\to\cU(\cM)$ be a $(\varepsilon,\mu)$-almost homomorphism. The idea is simple: we first use the stability of $G_1$ to say that the restriction of $\pi$ to $G_1$ is close to a representation. Then by property (T), we will deduce that the restriction of $\pi$ to $G_2$ is close to an almost homomorphism from $G_2$ to the commutant of $G_1$, and therefore by stability of $G_2$ to an actual homomorphism with values in the commutant of $G_1$.

Here are the details. If we denote
  \[\varepsilon_{i,j} = \iint_{G_i\times G_j} \| \varphi(gh) - \varphi(g) \varphi(h)\|_2^2d\mu_i(g) d\mu_j(h),\]
  we then have $\varepsilon_{1,1}+\varepsilon_{2,2} + \varepsilon_{1,2}+\varepsilon_{2,1}\leq 4\varepsilon$.

  The restriction of $\varphi$ to $G_1$ is an $(\varepsilon_{1,1},\mu_1)$-almost representation of $G_1$, so there is a unitary representation $\pi_1:G_1\to\cU(\cM)$ such that
  \[\int \|\varphi(g)-\pi_1(g)\|_2^2 d\mu_1(g) \leq \delta_1(\varepsilon_{1,1}).\]
  Denote $\cN=\cM \cap \tilde{\pi}_1(G_1)'$, and $E:\cM\to\cN$ the conditional expectation. Since $\vn$ is assumed to be stable under taking subalgebras, we have $\cN\in\vn$. Define, for every $h \in G_2$,
  \[\eta(h) = \|\varphi(h) -E(\varphi(h))\|_2.\]
We first establish an upper bound for the norm of $\eta$ in $L_2(G_2,\mu_2)$. This is where we use that $G_1$ has property (T). By Lemma~\ref{lemma:spectral_gap_commutator}, we know that
  \[ \eta(h)^2 \leq \kappa(\mu_1) \int_{G_1} \|[\varphi(h),\pi_1(g)]\|_2^2 d\mu_1(g).\]
  We can bound
  \[\|[\varphi(h),\pi_1(g)\|_2 \leq 2\|\varphi(g)-\pi_1(g)\|_2 + \|\varphi(h)\varphi(g) -\varphi(hg)\|_2 + \|\varphi(g) \varphi(h) - \varphi(gh)\|_2,\]
 So by the triangle inequality in $L_2(\mu_1\times \mu_2)$, we obtain
  \[ \|\eta\|_{L_2(\mu_2)} \leq \sqrt{\kappa(\mu_1)/2}(2 \sqrt{\delta_1(\varepsilon_{1,1})} + \sqrt{\varepsilon_{1,2}} + \sqrt{\varepsilon_{2,1}}).\]
  By the standing assumptions on $\delta_1$, and the Cauchy-Schwarz inequality, we obtain the following inequality that we will use shortly
  \begin{equation}\label{eq:L2norm_of_eta} \int\eta(h)^2 d\mu_2(h) \leq 12 \kappa(\mu_1) \delta_1(\varepsilon).\end{equation}

By Lemma~\ref{eq:almost-unitary_close_to_unitary}, for every $h \in G_2$ there is a unitary $V(h) \in \cU(\cN)$ such that $\|\varphi(h) - V(h)\|_2\leq \sqrt{2}\eta(h)$. Our goal is to show that $V$ is an almost-homomorphism, to then apply flexible stability for $G_2$. By the triangle inequality, 
  \[ \left(\int \|V(gh)-V(g)V(h)\|_2^2 d\mu_2(g) d\mu_2(h)\right)^{\frac 1 2}\]
  is bounded above by
  \[ \sqrt{\varepsilon_{2,2}} + 2\sqrt{2} \|\eta\|_{L_2(\mu_2)} +  \sqrt{2}\|\eta\|_{L_2(\mu_2\ast\mu_2)}.\]  
  Decompose $\varphi(gh) = a+b+c$ with $a= \varphi(gh)-\varphi(g)\varphi(h)$, $b= \varphi(g) (\varphi(h) - E_\cN(\varphi(h)))$ and $c=\varphi(g) E_\cN(\varphi(h))$. Using Lemma~\ref{eq:norm_of_conditional_exp}, and writing $\sup_\eta$ for the supremum over all unit vectors in the orthogonal of $L_2(\cN,\tau)$ in $L_2(\cM,\tau)$, we have
\begin{align*}
  \eta(gh) & = \sup_\eta \tau(a\eta) + \tau(b\eta)+ \tau(c\eta)\\
  & \leq \|a\|_2 + \|b\|_2 +\|c - E_\cN(c)\|_2\\
  & \leq \|\varphi(gh) - \varphi(g) \varphi(h)\|_2 + \eta(h)+\eta(g).
\end{align*}
  As a consequence, we have $\|\eta\|_{L_2(\mu_2\ast\mu_2)} \leq \sqrt{\varepsilon_{2,2}} + 2\|\eta\|_{L_2(\mu_2)}$, and we deduce
  \[ \left(\int \|V(gh)-V(g)V(h)\|_2^2 d\mu_2(g) d\mu_2(h)\right)^{\frac 1 2} \leq (1+\sqrt{2})\sqrt{\varepsilon_{2,2}} + 4\sqrt{2} \|\eta\|_{L_2(\mu_2)}.\]
By \eqref{eq:L2norm_of_eta}, this last quantity is less than $\sqrt{C\kappa(\mu_1) \delta_1(\varepsilon)}$ for some universal constant $C$. The theorem follows easily. Indeed, by stability for $G_2$, we deduce that there is a unitary representation $\pi_2:G\to\cU(\cN)$ such that
\[ \int \|V(g) - \pi_2(g)\|_2^2d\mu_2(g) \leq \delta_2(C\kappa(\mu_1)\delta_1(\varepsilon)),\]
and therefore 
\[ \int \|\varphi(g)- \pi_2(g)\|_2^2d\mu_2(g) \leq 3(\|\eta\|_{L_2(\mu_2)}^2 + \delta_2(C\kappa(\mu_1)\delta_1(\varepsilon))). \]
By definition of $\cN$, $\pi_2(g)$ commutes with $\pi_1(G_1)$, so the pair $({\pi}_1,{\pi}_2)$ gives rise to a unitary representation ${\pi} : G_1\times G_2\to \cU(\cM)$ by ${\pi}(g,h)={\pi}_1(g){\pi}_2(h)$. We have
\begin{align*} \int \|\varphi(g) - {\pi}(g)\|_2^2 d\mu(g)
  &\leq \frac{1}{2}(\delta_1(\varepsilon_{1,1}) + 3\|\eta\|_{L_2(\mu_2)}^2 + 3\delta_2(C\kappa(\mu_1)\delta_1(\varepsilon))).
\end{align*}
This is $\lesssim \delta_2(\kappa(\mu_1)\delta_1(\varepsilon))$ by \eqref{eq:L2norm_of_eta} and the standing assumption that the moduli $\delta_i$ are concave and satisfy $\delta_i(t) \geq t$.
\end{proof}
\begin{theorem}\label{thm:flexible_stability_direct_product} Direct products of $\vn$-flexibly stable groups are $\vn$-flexibly stable, provided that one of them has property (T).

  More precisely, if $(G_1,\mu_1)$ has property (T) and is $\vn$-flexibly stable with modulus $\delta_1$ and $(G_2,\mu_2)$ is $\vn$-flexibly stable with modulus $\delta_2$, then $(G_1 \times G_2,\frac{1}{2}(\mu_1 \otimes \delta_1 + \delta_1 \otimes \mu_2))$ is $\vn$-flexibly stable with modulus $\delta(\varepsilon) \lesssim  \delta_2(\kappa(\mu_1) \delta_2(\varepsilon))$.
\end{theorem}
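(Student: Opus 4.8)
The plan is to run the proof of Theorem~\ref{thm:stability_direct_product} almost verbatim. The only genuinely new manoeuvre is at the very beginning: invoking flexible stability of $G_1$ now yields a representation living on a corner $\cM':=P_1\cM_\infty P_1$ rather than on $\cM$, so I would first \emph{compress the whole problem into $\cM'$}, which puts us back in exactly the situation reached after the first paragraph of the proof of Theorem~\ref{thm:stability_direct_product}; then I would run that proof inside $\cM'$ and, at the end, reassemble the two isometries. Throughout, let $\varphi\colon G_1\times G_2\to\cU(\cM)$ be an $(\varepsilon,\mu)$-almost homomorphism for $\mu=\tfrac12(\mu_1\otimes\delta_e+\delta_e\otimes\mu_2)$, where $\delta_e$ is the Dirac mass at the unit (the symbols ``$\delta_1$'' in the measure of the statement are this $\delta_e$; the inner modulus in the claimed conclusion should read $\delta_1$, matching Theorem~\ref{thm:stability_direct_product}). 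As in Theorem~\ref{thm:stability_direct_product} set $\varepsilon_{i,j}=\iint_{G_i\times G_j}\|\varphi(gh)-\varphi(g)\varphi(h)\|_2^2\,d\mu_i\,d\mu_j$, so $\varepsilon_{1,1}+\varepsilon_{2,2}+\varepsilon_{1,2}+\varepsilon_{2,1}\le4\varepsilon$.

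\emph{Step 1 (compression).} Apply flexible $\vn$-stability of $(G_1,\mu_1)$ to $\varphi|_{G_1}$ to get $P_1\in\cM_\infty$ with $\tau(P_1)-1\le\delta_1(\varepsilon_{1,1})$, a representation $\pi_1\colon G_1\to\cU(\cM')$ and an isometry $w_1\in P_1\cM_\infty1_\cM$ with $\int\|\varphi(g)-w_1^*\pi_1(g)w_1\|_2^2\,d\mu_1(g)\le\delta_1(\varepsilon_{1,1})$. Set $q:=P_1-w_1w_1^*$, a projection with $\tau(q)=\tau(P_1)-1$, and define $\tilde\varphi\colon G_1\times G_2\to\cU(\cM')$ by $\tilde\varphi(g)=w_1\varphi(g)w_1^*+q$. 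Since $w_1^*w_1=1_\cM$ and $w_1w_1^*q=0$ one checks $\tilde\varphi(g)\tilde\varphi(h)=w_1\varphi(g)\varphi(h)w_1^*+q$, so $\tilde\varphi$ is an almost homomorphism on $G_1\times G_2$ with the same constants $\varepsilon_{i,j}$, and moreover $w_1^*\tilde\varphi(g)w_1=\varphi(g)$ for every $g$. Expanding $\pi_1(g)=(w_1w_1^*+q)\pi_1(g)(w_1w_1^*+q)$, the three terms containing $q$ have $\|\cdot\|_2$ at most $\|q\|_2=\sqrt{\tau(P_1)-1}$, whence $\|\pi_1(g)-\tilde\varphi(g)\|_2\le4\sqrt{\tau(P_1)-1}+\|\varphi(g)-w_1^*\pi_1(g)w_1\|_2$ and so $\int_{G_1}\|\pi_1(g)-\tilde\varphi(g)\|_2^2\,d\mu_1(g)\lesssim\delta_1(\varepsilon_{1,1})$. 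Now, inside $\cM'\in\vn$ (I use here that $\vn$ is closed under the relevant operations — for $\vn=\vn_{\mathrm{fin}}$ a finite-trace corner of $\cM_\infty$ is again finite-dimensional), we are in exactly the situation reached after the first paragraph of the proof of Theorem~\ref{thm:stability_direct_product}, with $\tilde\varphi$ in place of $\varphi$.

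\emph{Step 2 (the body of Theorem~\ref{thm:stability_direct_product} inside $\cM'$).} Put $\cN:=\cM'\cap\pi_1(G_1)'\in\vn$, let $E\colon\cM'\to\cN$ be the conditional expectation, and set $\eta(h):=\|\tilde\varphi(h)-E(\tilde\varphi(h))\|_2$ for $h\in G_2$. By Lemma~\ref{lemma:spectral_gap_commutator} applied to $\pi_1$ and $V=\tilde\varphi(h)$, and because $g$ and $h$ commute in $G_1\times G_2$, the same expansion of $[\pi_1(g),\tilde\varphi(h)]$ as in Theorem~\ref{thm:stability_direct_product} gives $\int_{G_2}\eta(h)^2\,d\mu_2(h)\lesssim\kappa(\mu_1)\,\delta_1(\varepsilon)$. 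By Lemma~\ref{eq:almost-unitary_close_to_unitary} choose $V(h)\in\cU(\cN)$ with $\|\tilde\varphi(h)-V(h)\|_2\le\sqrt2\,\eta(h)$; then, exactly as in Theorem~\ref{thm:stability_direct_product} (using Lemma~\ref{eq:norm_of_conditional_exp} to get $\eta(gh)\le\|\tilde\varphi(gh)-\tilde\varphi(g)\tilde\varphi(h)\|_2+\eta(g)+\eta(h)$), the map $V$ is an $(O(\kappa(\mu_1)\delta_1(\varepsilon)),\mu_2)$-almost homomorphism into $\cU(\cN)$.

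\emph{Step 3 (flexible stability of $G_2$ and reassembly).} Apply flexible $\vn$-stability of $(G_2,\mu_2)$ to $V\colon G_2\to\cU(\cN)$ (with its normalized trace): this gives $Q\in\cN_\infty$ with $\tau_{\cN,\infty}(Q)-1\lesssim\delta_2(\kappa(\mu_1)\delta_1(\varepsilon))$ (concavity of $\delta_2$ absorbs the constant from Step 2), a representation $\pi_2\colon G_2\to\cU(Q\cN_\infty Q)$, and an isometry $v\in Q\cN_\infty1_\cN$ with $\int\|V(h)-v^*\pi_2(h)v\|_2^2\,d\mu_2(h)\lesssim\delta_2(\kappa(\mu_1)\delta_1(\varepsilon))$. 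Since $1_\cN=P_1$ and $\cN\subseteq\cM'\subseteq\cM_\infty$, this all takes place in a single $\cM_\infty$ after identifying $\ell_2\otimes\ell_2\cong\ell_2$. Now $w:=vw_1$ is an isometry in $Q\cM_\infty1_\cM$ ($w^*w=w_1^*v^*vw_1=w_1^*P_1w_1=1_\cM$); since $Q\in\cN_\infty$ commutes with $\pi_1(G_1)$, the unitary $Q\pi_1(g)$ lies in $Q\cM_\infty Q$ and commutes with $\pi_2(G_2)\subseteq\cN_\infty$, so $\pi(g,h):=Q\pi_1(g)\,\pi_2(h)$ is a unitary representation of $G_1\times G_2$ on $Q\cM_\infty Q$. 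Using that $v$ commutes with $\pi_1(G_1)$ and $v^*v=1_\cN$ one computes $w^*\pi(g,1)w=w_1^*\pi_1(g)w_1$, hence $\int_{G_1}\|\varphi(g)-w^*\pi(g,1)w\|_2^2\,d\mu_1(g)\le\delta_1(\varepsilon_{1,1})$; and with $\tilde V(h):=v^*\pi_2(h)v\in\cN$ one gets $w^*\pi(1,h)w=w_1^*\tilde V(h)w_1$, so from $\varphi(h)=w_1^*\tilde\varphi(h)w_1$ we obtain $\|\varphi(h)-w^*\pi(1,h)w\|_2\le\sqrt2\,\eta(h)+\|V(h)-\tilde V(h)\|_2$ and therefore $\int_{G_2}\|\varphi(h)-w^*\pi(1,h)w\|_2^2\,d\mu_2(h)\lesssim\delta_2(\kappa(\mu_1)\delta_1(\varepsilon))$. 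Adding the last two estimates with weights $\tfrac12$ (as dictated by $\mu$), and noting $\tau_\infty(Q)=\tau(P_1)\,\tau_{\cN,\infty}(Q)\le(1+\delta_1(\varepsilon_{1,1}))(1+\delta_2(\kappa(\mu_1)\delta_1(\varepsilon)))$, yields all three requirements of Definition~\ref{def:flexible_stability} with modulus $\delta(\varepsilon)\lesssim\delta_2(\kappa(\mu_1)\delta_1(\varepsilon))$ (take the concave envelope of a constant times $\min(4,\delta_2(\kappa(\mu_1)\delta_1(\varepsilon)))$). I expect the only real difficulty to be this bookkeeping — keeping the nested algebras $\cM\subseteq\cM'=P_1\cM_\infty P_1\supseteq\cN=\cM'\cap\pi_1(G_1)'$ and the corner $Q\cM_\infty Q$ straight, with the trace renormalizations and the identification $1_\cN=P_1$ that makes $w=vw_1$ composable; once Step 1 is carried out no idea beyond the proof of Theorem~\ref{thm:stability_direct_product} is required.
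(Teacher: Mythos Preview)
Your proposal is correct and follows the same route as the paper's (very brief) proof: compress $\varphi$ to $\tilde\varphi(g)=w_1\varphi(g)w_1^*+(P_1-w_1w_1^*)$ on the dilation $\cM'=P_1\cM_\infty P_1$, then rerun the argument of Theorem~\ref{thm:stability_direct_product} inside $\cM'$, and finally invoke flexible stability of $G_2$ on the commutant $\cN$. The only cosmetic difference is the final reassembly: the paper transports $\pi_1$ to the second dilation by $w_2\pi_1(\cdot)w_2^*+(P_2-w_2w_2^*)$, whereas you use the commutation of $\cN_\infty$ with $\pi_1(G_1)$ to write $Q\,(\pi_1(g)\otimes 1)$ directly; since $v\in\cN_\infty$ these two unitaries differ only on the small projection $Q-vv^*$, so the resulting estimates are the same. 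Your observation that the closure hypothesis on $\vn$ must now include finite-trace corners of amplifications (not merely subalgebras) is well taken and is implicit in the paper as well.
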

\begin{proof}
  This is proved in the same way as for stability. The only difference is that the von Neumann algebras change after each use of flexible stability. First, when flexible stability is used for $G_1$, a representation $\tilde{\pi}_1$ of $G_1$ is constructed in a small dilation $\cM_1$ of $\cM$. The almost representation of $G_2$ is then defined with values in $\cU(\cM_1)$ as $w_1 \pi(g) w_1^* +P_1 - w_1w_1^*$. Similarly, when flexible stability is used for $G_2$ to construct a representation $\tilde{\pi}_2$ in a small dilation $\cM_2$ of $\cM_1$, the representation $\tilde{\pi}_1$ of $G_1$ is then defined with values in $\cU(\cM_2)$ as
$w_2 \tilde{\pi}_1(g) w_2^* +P_2 - w_2w_2^*$.
\end{proof}
One way of expressing the crucial step in the above proofs is as follows~: by (the baby case of) von Neumann's bicommutant theorem, the centralizer of a subgroup of $\cU(n)$ decomposes as a direct product of smaller unitary groups. This is not true for permutation groups, and therefore the proof of the preceding theorems does not apply verbatim for permutation stability or permutation flexible stability. The following question is however natural. A positive answer would pleasingly complement Ioana's results \cite{MR4134896}.
\begin{question} Is it true that the direct product of two (flexibly) permutation stable groups is  (flexibly) permutation stable, provided that one of them has property ($\tau$)?
\end{question}

\bibliographystyle{plain}
\bibliography{biblio}

\end{document}